\newtheorem{thm}{Theorem}[section]
\newtheorem{lem}[thm]{Lemma}
\newtheorem{pro}[thm]{Proposition}
\newtheorem{cor}[thm]{Corollary}
\newtheorem{bsc}[thm]{Fact}
\def\cal#1{\fam2#1}
\def\T{{\mathbb T}}
\def\bb{\begin}
\def\be{\begin{equation}}
	\def\ee{\end{equation}}
\def\bea{\begin{eqnarray}}
	\def\eea{\end{eqnarray}}
\def\beaa{\begin{eqnarray*}}
	\def\eeaa{\end{eqnarray*}}
\def\ifl{\iffalse}
\def\bb{\begin}
           \def\ea{\end{array}}
          \def\ec{\end{center}}
     \def\ed{\end{description}}
\def\be{\bb{equation}}        \def\ee{\end{equation}}
\def\bea{\bb{eqnarray}}       \def\eea{\end{eqnarray}}
\def\beaa{\bb{eqnarray*}}     \def\eeaa{\end{eqnarray*}}
 \def\et{\end{thebibliography}}
       \def\F{{\cal F}}    
\def\V{{\cal V}}   \def\U{{\cal U}}
\def\supp{{\rm supp}}
\def\Leb{{\rm Leb}}
\begin{document}

\title{Variation of Physical Measures in Nontrivial Mixed Partially Hyperbolic Systems}

\date{}
\maketitle

{\center
Hangyue Zhang

}

\vspace{1em}

\begin{abstract}
We construct a \( C^\infty \) nontrivial mixed partially hyperbolic system and explicitly identify its skeleton. This example shares characteristics with the classical examples studied by Smale~\cite{Smale1967}, Shub~\cite{Shub1971}, Kan~\cite{Kan1994}, and Dolgopyat–Viana–Yang~\cite{DolgopyatVianaYang2016}. Moreover, the support of each physical measure contains three fixed points with mutually distinct unstable indices.
By appropriately perturbing the skeleton, we provide an example where the number of physical measures varies upper semicontinuously. The general framework of mixed partially hyperbolic systems has been studied in \cite{MiCaoYang2017, MiCao2021}.
\end{abstract}

\vspace{1em}

{\itshape
2020 Mathematics Subject Classification: 37D30, 37C40, 37D25, 37D35.
}

\vspace{0.3em}

{\itshape
Keywords and phrases: partially hyperbolic diffeomorphisms, physical measures, mixed center.
}

\section{Introduction}

Let $f : M \to M$ be a $C^{1+}$-diffeomorphism on a smooth Riemannian manifold $M$, and let  \( \mu \) be an \( f \)-invariant measure.  We say that \( \mu \) is a \textbf{physical measure}  if the set
\[
B(\mu) := \left\{ x \in M : \lim_{n \to +\infty} \frac{1}{n} \sum_{j=0}^{n-1} \varphi(f^j(x)) = \int \varphi \, d\mu, \ \forall \varphi \in C^0(M) \right\}
\]
has positive Lebesgue measure. The set \(B(\mu)\) is called the {\textbf{basin} of \(\mu\).  
We write \( E \oplus_{\succ} F \) if
\begin{itemize}
    \item The dimensions of \( E(x) \) and \( F(x) \) are constant over \( M \).
    \item \( Df(E) = E, Df(F) = F \), and \( E \cap F \) is a trivial bundle.
    \item There exist constants \( c > 0 \) and \( \sigma < 1 \) such that, for all nonzero vectors \( v_E \in E \) and \( v_F \in F \), and for all \( n \geq 1 \),
    \[
    \frac{\| Df^n(v_F) \|}{\| Df^n(v_E) \|} \leq c \sigma^n \frac{\| v_F \|}{\| v_E \|}.
    \]
\end{itemize}
We mention that the symbol "$\succ$" in the notation $E \oplus_{\succ} F$ not only signifies that $E$ dominates $F$, but also indicates that the angles between $E$ and $F$ are uniformly bounded away from 0.
Now we say that $f$ is {\textbf{partially hyperbolic} if there exists a  $Df$-invariant  continuous splitting of the tangent bundle
$$
TM=E^{uu}\oplus_\succ E^{cu} \oplus_\succ E^{cs}
$$
such that \( E^{uu} \) is uniformly expanding. 
It is a classical result from \cite{HPS} that the strong unstable bundle \(E^{uu}\) integrates uniquely to an \(f\)-invariant strong unstable foliation \( \mathscr{F}^{uu}(f) \).

 We call $\mu$ a {\textbf{Gibbs $u$-state}} of $f$ if  conditional measures of \( \mu \) on strong unstable leaves are absolutely continuous with respect to Lebesgue measure.  The concept of a Gibbs \( u \)-state was  first introduced by Pesin and Sinai \cite{PS} in 1982.
 In 1984, Ledrappier \cite{Ledrappier1984} proved  that $\mu$ a Gibbs $u$-state} of $f$  if and only if it satisfies following partial entropy formula ~\ref{LS}.
\begin{equation}\label{LS}
h_\mu(f, \mathscr{F}^{uu}(f)) = \int \log |\det Df|_{E^{uu}}|  d\mu,
\end{equation}
where \( h_\mu(f, \mathscr{F}^{uu}(f)) = h_\mu(f, \xi) \) for any measurable partition \( \xi \) that is \( u \)-subordinate to \( \mathscr{F}^{uu}(f) \) with respect to \( \mu \)
 (See \cite[Lemma 3.1.2]{LedrappierYoung1985} and \cite[Lemma 3.2]{Yang2021} for more details about \( h_\mu(f, \xi) \).) 

We say that an invariant subbundle  \(E\) is \textbf{mostly contracting} (resp.  \textbf{mostly expanding} ) if every Gibbs \(u\)-state of \(f\) has only negative (respectively, only positive) Lyapunov exponents along \(E\).

In cases where \( E^{cs} \) is uniformly contracting, \( f \) is called \textbf{ non-uniformly expanding along \( E^{cu} \)} if there exists a positive Lebesgue measure set \( H \) such that for every \( x \in H \),  
\begin{equation}\label{non-unifor}
\limsup_{n \to +\infty} \frac{1}{n}\sum_{j=1}^n\log\|Df^{-1}|_{E^{cu}(f^j(x))}\| < 0.
\end{equation}
This class of partially hyperbolic systems, which includes the case where \( E^{uu} \) is trivial, was investigated by Alves, Bonatti, and Viana in   \cite{AlvesBonattiViana2000} in 2000. They established the existence of physical measures for such systems.

In 2000, Bonatti and Viana \cite{BonattiViana2000} discovered that when \( E^{cu} \) is a trivial bundle, partially hyperbolic diffeomorphisms admit finitely many physical measures, provided that \( E^{cs} \) is mostly contracting.  In 2016, Dolgopyat, Viana, and Yang \cite{DolgopyatVianaYang2016} discovered a skeleton structure that describes the supports and basins of physical measures in such systems. They analyzed the behavior of this skeleton under perturbations. Their results showed that the number of physical measures in these systems exhibits upper semicontinuity. In simpler terms, perturbations cannot cause the number of physical measures to increase compared to the original system. To illustrate this phenomenon, they provided examples of New–Kan–type skew products on \( \mathbb{T}^2 \times S^2 \).



In 2017, Mi, Cao, and Yang \cite{MiCaoYang2017} studied systems where \(E^{cu}\) is mostly expanding and \(E^{cs}\) is mostly contracting. It is worth mentioning that this mixed setting is a \(C^1\)-robust property, as stated in \cite[Theorem~B]{Yang2021}. They showed that such systems admit finitely many physical measures, whose basins together cover a subset of \(M\) of full Lebesgue measure.
This mixed setting generalizes other frameworks for studying physical measures in partially hyperbolic splittings, such as the classical mostly expanding and mostly contracting settings \cite{BonattiViana2000, AnderssonVasquez2018}. Their results partially resolved the problem posed by Alves, Bonatti, and Viana in \cite[Section~6]{AlvesBonattiViana2000}.  Later, in 2020, Mi and Cao \cite{MiCao2021} studied the variation of physical measures in this mixed setting. They obtained the same type of result as Dolgopyat, Viana, and Yang, namely, the upper semicontinuity of the number of physical measures.  Since Dolgopyat, Viana, and Yang provided examples showing how physical measures collapse, it is therefore natural to ask whether there exist mixed examples that also exhibit the collapse of physical measures, that is, examples satisfying the following properties.
\begin{enumerate}
\item \label{first}The map \( f \) admits the partially hyperbolic splitting \( TM = E^{uu} \oplus_\succ E^{cu} \oplus_\succ E^{cs} \), such that \( E^{cs} \) is mostly contracting and \( E^{cu} \) is mostly expanding.

\item \label{second}
For any physical measure \( \mu \) of \( f \), there exist two fixed points \( p \) and \( q \) such that
\begin{equation}\label{liangdian}
 \| Df|_{E^{cu}(p)} \| < 1 \quad \text{and} \quad \| Df^{-1}|_{E^{cs}_{sub}(q)} \| < 1, \quad \{p, q\} \subset \supp(\mu),
\end{equation}
where \( E^{cs}_{sub}(q) \) is a proper invariant subbundle of \( E^{cs}(q) \).

\item \label{third} The number of physical measures varies in an upper semi-continuous manner.
\end{enumerate}
(We briefly explain property~\eqref{second}: If \( \| Df^{-1}|_{E^{cs}(q)} \| < 1 \), then, by the dominated splitting, \( q \) would be a repelling point. This means that \( Df^{-1} \) contracts vectors in the subbundle \( E^{cs}(q) \), leading to \( q \) being a repelling fixed point. As a result, \( q \) could not lie in the support of any non-atomic measure. Moreover, by the dominated splitting, we conclude that \( p \neq q \), since the two points must lie in distinct regions of the dynamics.)
Motivated by this question, we undertook the present work and provided an affirmative answer. This may help to fill the gap in \cite{MiCao2021}.

\bigskip

\begin{thm}[Main Theorem]\label{main}
There exists a smooth diffeomorphism \( f \) on \( \mathbb{T}^2 \times \mathbb{S}  \times \mathbb{T}^2 \) that admits two physical measures and satisfies properties \eqref{first}, \eqref{second} and \eqref{third} where \(\mathbb{S} = \mathbb{R}/2\mathbb{Z}\).
\end{thm}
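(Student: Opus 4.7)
The plan is to construct $f$ as a three-level skew product of the form $(x,s,y) \mapsto (A(x), g_x(s), h_{(x,s)}(y))$ on $\mathbb{T}^2 \times \mathbb{S} \times \mathbb{T}^2$, with each floor responsible for one piece of the splitting. Here $A \in \mathrm{SL}(2,\mathbb{Z})$ is a linear Anosov automorphism supplying the two-dimensional uniformly expanding bundle $E^{uu}$. The fiber map $g_x : \mathbb{S} \to \mathbb{S}$ is a $C^\infty$ circle map whose fiberwise derivative has Lebesgue average strictly greater than $1$ (producing the mostly expanding one-dimensional $E^{cu}$), but which admits an attracting fixed point $s_p$ with $|g_{x_p}'(s_p)| < 1$ over a fixed point $x_p$ of $A$. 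The top-floor map $h_{(x,s)} : \mathbb{T}^2 \to \mathbb{T}^2$ implements a Kan-type mechanism: there are two invariant circles $C_\pm \subset \mathbb{T}^2$ attracting Lebesgue-almost every orbit in the fiber, and $E^{cs}$ carries a dominated sub-splitting with a one-dimensional invariant subbundle $E^{cs}_{sub}$ that is expanding at a distinguished fixed point $q$ on one of $C_\pm$.

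Properties \eqref{first} and \eqref{second} would then be verified by construction. Partial hyperbolicity follows from bunching the fiberwise derivatives against the Anosov eigenvalues of $A$. For the mixed-center condition: the choice of $g$ gives $\int \log|g_x'|\,d\mu > 0$ for every Gibbs $u$-state $\mu$, so $E^{cu}$ is mostly expanding; the Kan mechanism on the top floor produces two physical measures whose $E^{cs}$-exponents are both negative, so $E^{cs}$ is mostly contracting via the Ledrappier characterization in \eqref{LS}. The fixed points $p = (x_p, s_p, y_p)$ (placed on $C_+$, say) and $q$ of property \eqref{second} both lie in the support of a physical measure because $\mathbb{T}^2 \times \mathbb{S} \times C_\pm$ is, up to a Lebesgue-null set, the support of one of the two physical measures.

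For property \eqref{third} I would invoke the skeleton framework of \cite{MiCao2021}, which extends the Dolgopyat--Viana--Yang argument to the mixed setting. The skeleton of $f$ is explicit: it consists of two hyperbolic periodic points $z_\pm$, one on each Kan circle, whose strong-unstable manifolds equidistribute to the physical measure on the respective basin. Under any sufficiently $C^2$-small perturbation $\tilde f$, hyperbolic continuations $\tilde z_\pm$ persist, their strong-unstable manifolds vary continuously, and the general result of \cite{MiCao2021} shows that every physical measure of $\tilde f$ is accumulated by forward averages of Lebesgue on a strong-unstable disk meeting $\{\tilde z_+, \tilde z_-\}$. Consequently $\tilde f$ admits at most two physical measures, which is exactly upper semicontinuity.

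The main obstacle will be to simultaneously realize the "reverse" fixed points $p$ and $q$ without destroying the mixed structure. Forcing $|g_{x_p}'(s_p)| < 1$ while keeping $\int\log|g_x'|\,d\mu > 0$ requires a delicate balance in the fiber map: the expansion at generic points must compensate for the contraction at $p$, and the Anosov eigenvalues of $A$ must dominate not only averages but also pointwise derivatives in order to preserve the dominated splitting $E^{uu} \oplus_\succ E^{cu}$. Similarly, producing a genuine invariant subbundle $E^{cs}_{sub}$ with expanding behavior at $q$ (rather than merely an invariant line field) requires $h_{(x,s)}$ to have an "upper triangular" form in suitable coordinates, so that the subbundle is a priori $Df$-invariant. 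I would manage this by choosing $g_x$ and $h_{(x,s)}$ piecewise affine in the fibers, reducing the required inequalities to a finite list of explicit parameter constraints that can be checked by direct computation.
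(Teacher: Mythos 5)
Your proposal shares the architecture of the paper's construction (a three-level skew product on $\mathbb{T}^2 \times \mathbb{S} \times \mathbb{T}^2$ over a base Anosov, skeleton framework for property~\eqref{third}), but the roles you assign to the two upper floors are reversed relative to the paper, and this reversal creates a serious unresolved difficulty. You want the circle $\mathbb{S}$ to carry the mostly-expanding $E^{cu}$ and the top $\mathbb{T}^2$ to carry the Kan mechanism for $E^{cs}$; the paper does the opposite. It puts the Kan mechanism on $\mathbb{S}$ (so that $\widetilde{T\mathbb{S}}=E^{cs}_{\mathrm{sub}}$ is mostly contracting), and equips the top $\mathbb{T}^2$ with another Anosov $A^{n_0}$, whose unstable direction — perturbed to be weakly contracting near two fixed points $p_1,p_2$ — becomes $E^{cu}$, while its stable direction is absorbed into $E^{cs}$. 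The paper's choice is not cosmetic: the Kan mechanism is standard on a one-dimensional fiber, and the top $\mathbb{T}^2$ comes with a \emph{free} dominated splitting from $A^{n_0}$, which is precisely what makes the invariant subbundle $E^{cs}_{\mathrm{sub}}$ in property~\eqref{second} exist without further work. In your version, the top $\mathbb{T}^2$ must simultaneously support (a) a Kan-type mechanism with two attracting invariant circles and intermingled basins — which requires the fiber maps $h_{(x,s)}$ to be close to the identity — and (b) a uniformly dominated sub-splitting yielding a one-dimensional invariant $E^{cs}_{\mathrm{sub}}$ expanding at a fixed point $q$. These two demands pull in opposite directions (domination needs a uniform rate gap, Kan needs near-neutrality), and the sentence about making $h$ ``upper triangular'' and ``piecewise affine'' does not resolve the conflict; it merely names it.

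Two further concrete problems. First, a linear Anosov automorphism of $\mathbb{T}^2$ lies in $\mathrm{SL}(2,\mathbb{Z})$, so it is area-preserving and cannot supply a \emph{two-dimensional} uniformly expanding bundle; its unstable bundle is one-dimensional, so your $E^{uu}$ has the wrong dimension as stated (and the stable direction of $A$ has to be accounted for inside $E^{cs}$). Second, and more substantively, your treatment of property~\eqref{third} only invokes the general upper-semicontinuity supplied by the skeleton framework of \cite{MiCao2021}. That alone says perturbations produce \emph{at most} two physical measures, which is automatic for any system with mostly expanding/mostly contracting centers and does not exhibit the actual collapse. The paper's Theorem~\ref{reduction} supplies the missing ingredient: an explicit $C^1$-small perturbation, localized at a transverse intersection point $r \in \mathcal{F}^{uu}(q_1)\cap\mathcal{F}^{ss}(q_2)$, that creates a transverse heteroclinic connection between $W^u((q_1,0,q_1))$ and $W^s((q_2,1,q_2))$; by the Inclination Lemma this makes the skeleton collapse to a single point, and hence the perturbed map has exactly one physical measure. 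Without some analogue of that step you have not shown the count really changes.
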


\bigskip

We remark that, in our construction, any even number of physical measures can be produced while still satisfying properties properties \eqref{first}, \eqref{second} and \eqref{third}. Indeed, by taking
\[
\mathbb{S} = \mathbb{R}/2k\mathbb{Z}, \qquad k \in \mathbb{N}^+
\]
the resulting diffeomorphism admits \(2k\) distinct physical measures. For simplicity, we focus on the case of two physical measures. Under small perturbations, these two measures collapse into a single one. This simplified case is sufficient to demonstrate the phenomenon of interest.

The reason our construction embodies certain characteristics of many classical examples (e.g., Smale~\cite{Smale1967}, Shub~\cite{Shub1971}, Kan~\cite{Kan1994}, and Dolgopyat–Viana–Yang~\cite{DolgopyatVianaYang2016}),  stems from the following Proposition~\ref{laiyuan2}, which is inspired by \cite{GanLiVianaYang2021}. 
\begin{pro}\label{laiyuan2}
 Consider the skew  product diffeomorphism defined by
\[
f(x, y) = (\widetilde{A}(x), K(x, y)) \quad \text{on} \quad \mathbb{T}^2 \times N,
\]
where \( N \) is a compact smooth Riemannian manifold and \( \widetilde{A} \) is a diffeomorphism on \( \mathbb{T}^2 \). Then  there does not exist a physical measure for which all the Lyapunov exponents along the invariant subbundle \( TN \) are positive.
\end{pro}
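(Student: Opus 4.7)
The plan is to argue by contradiction, exploiting the fact that for each fixed $x$ the fiber map $y \mapsto f^n(x,y)$ is a diffeomorphism of the compact manifold $N$, so its Jacobian has total fiber integral equal to $\vol(N)$, whereas strict positivity of all Lyapunov exponents along $TN$ would force this Jacobian to be exponentially large on a set of positive Lebesgue measure in $N$. More precisely, I would assume for contradiction that some physical measure $\mu$ has all Lyapunov exponents along $TN$ strictly positive $\mu$-a.e., and set
\[
\lambda := \int \log|\det D_y K(x,y)|\, d\mu(x,y).
\]
Since $Df|_{TN}$ is exactly the fiber derivative $D_yK$, Oseledets' theorem identifies $\lambda$ with the $\mu$-averaged sum of the Lyapunov exponents along $TN$, so $\lambda>0$.

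The next step is to transfer this information to the basin. Because $f$ is a diffeomorphism of $\mathbb{T}^2\times N$ preserving the fiber structure, each $K(x,\cdot)$ is a diffeomorphism of $N$, hence $\varphi(x,y):=\log|\det D_y K(x,y)|$ is continuous on the compact space $\mathbb{T}^2\times N$. By the definition of $B(\mu)$ and the chain rule, for every $z\in B(\mu)$,
\[
\frac{1}{n}\log|\det D_y f^n(z)| \;=\; \frac{1}{n}\sum_{j=0}^{n-1}\varphi(f^j z) \;\longrightarrow\; \lambda.
\]
A routine Egorov-plus-Fubini extraction then yields a measurable set $B'\subset B(\mu)$ of positive Lebesgue measure on which this convergence is uniform, together with a set $X'\subset \mathbb{T}^2$ of positive Lebesgue measure and a constant $\delta>0$ such that $\Leb_N(\{y\in N:(x,y)\in B'\})\ge \delta$ for every $x\in X'$, and such that $|\det D_y f^n(x,y)|\ge e^{n\lambda/2}$ on $B'$ for all sufficiently large $n$.

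To close the argument, I fix $x\in X'$ and observe that $K_n(x,\cdot):=f^n|_{\{x\}\times N}$ is a diffeomorphism of $N$; the change-of-variables formula therefore gives
\[
\vol(N) \;=\; \int_N |\det D_y K_n(x,y)|\, dy \;\ge\; \int_{\{y:(x,y)\in B'\}} e^{n\lambda/2}\, dy \;\ge\; \delta\, e^{n\lambda/2},
\]
which is impossible for $n$ large. The only mildly technical point is the uniform Egorov–Fubini control of the Birkhoff averages of $\varphi$; the conceptual content is simply that a skew-product diffeomorphism with compact fibers cannot exponentially expand fiber volume on a set of positive Lebesgue measure.
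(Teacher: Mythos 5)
Your argument is correct and follows essentially the same route as the paper: use Oseledets to get $\int \log|\det(Df|_{TN})|\,d\mu > 0$, note that for $z\in B(\mu)$ the Birkhoff averages of the continuous function $\log|\det(Df|_{TN})|$ converge to that positive value, use Fubini to find a fiber slice $\{x_0\}\times N$ in which the basin has positive $\Leb_N$-measure, pass to a positive-measure subset where the exponential rate is uniform (the paper does this with a nested-union argument, which is a hands-on form of your Egorov step), and then derive a contradiction with $\vol(N)<\infty$ via the change-of-variables/Jacobian formula for $f^n$ restricted to fibers. The only presentational difference is that the paper works with a single slice $x_0$ rather than a positive-measure set $X'$ of base points, which is slightly leaner but makes no substantive difference.
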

Since this is the beginning of the source of this paper, we provide a direct proof.
\begin{proof}
The proof proceeds by contradiction.
 Suppose such a physical measure \( \mu \) exists. 
 Then, by the Oseledets multiplicative ergodic theorem (see \cite{Oseledec1968} and \cite[Theorem~4.2]{Viana2014}), we obtain
\[
\int \log |\det \left( Df \big|_{TN} \right)| \, d\mu > 0.
\]
Since \( (\Leb_{\mathbb{T}^2} \times \Leb_{N})( B(\mu)) > 0\), it follows from Fubini's theorem (\(\Leb_{\mathbb{T}^2} \times \Leb_{N}\)) that there would exist a \( \Leb_{N} \)-positive measurable set 
\[
\Lambda_{N}(a, x_0) \subset \{ x_0 \} \times N
\]
such that for every \( (x_0, y) \in \Lambda_{N}(a, x_0) \), we have
\begin{align*}
\lim_{n \to +\infty} \frac{1}{n} \sum_{0 \le i \le n-1} \log \left| \det \left( Df \big|_{T N(f^i(x_0, y))} \right) \right| 
&= \lim_{n \to +\infty} \frac{1}{n} \log \left| \det \left( Df^n \big|_{T N(x_0, y)} \right) \right| \\
&\geq a > 0.
\end{align*}
For convenience, denote by \( \Leb_{N} \) the Lebesgue measure on \( \{ x \} \times N \) for any \( x \in \mathbb{T}^2 \).
Then
\[
\Lambda_{N}(a,x_0) \subset \bigcup_{i \in \mathbb{N}} \bigcap_{n \geq i} \left\{(x_0, y): \frac{1}{n} \log \left| \det \left( Df^n \big|_{TN(x_0,y)} \right) \right| \geq \frac{a}{2} \right\}.
\]
There exists \(n_0\) such that 
\[
\Leb_{N} \left( \bigcap_{n \geq n_0} \left\{(x_0, y): \frac{1}{n} \log \left| \det \left( Df^n \big|_{TN(x_0,y)} \right) \right| \geq \frac{a}{2} \right\} \right) > 0.
\]
Denote
\[
\Lambda=\bigcap_{n \geq n_0} \left\{(x_0, y): \frac{1}{n} \log \left| \det \left( Df^n \big|_{TN(x_0,y)} \right) \right| \geq \frac{a}{2} \right\}.
\]
 Since \(f \) permutes the set \( \{ \{ x \} \times N : x \in \mathbb{T}^2 \} \), we have
\begin{align*}
\Leb_{N}(f^n(\Lambda)) &= \int_{\Lambda} \left| \det \left( Df^n \big|_{TN(x_0,y)} \right) \right| \, d\Leb_{N} \\
&\geq e^{\frac{an}{2}} \, \Leb_{N}(\Lambda).
\end{align*}
Thus, when \( n \) is sufficiently large, this will contradict the fact that the volume of \( N \) is finite.
\end{proof}

    Since the physical measure obtained in the mixed setting has only positive Lyapunov exponents along \( E^{cu} \) \cite[Theorem~A]{MiCaoYang2017}, it follows that   to construct examples that belong to our case, there should exist a proper subbundle of \( TN \) that is mostly expanding, and there should exist another proper subbundle of \( TN \) that is mostly contracting.  Moreover, since we are considering the change in the number of physical measures,  roughly speaking,  the skew product, compared to derived from Anosov diffeomorphisms, is a good choice to reflect this change.  
All these reasons lead to the conclusion that the characteristics of many classical examples naturally emerge in the examples of Theorem~\ref{main}.


We accomplish this work as follows: In Section~\ref{two}, we introduce the background material needed for this paper. We then construct a \(C^\infty\) diffeomorphism $f$ in Section~\ref{three} and explicitly identify its skeleton in Section~\ref{five}. For any point \(\tilde{d} \in \mathrm{Skeleton}\), the closure of \(W^u(\tilde{d})\) always contains the two points that make inequality~\eqref{liangdian} hold.  Next, within any \(C^1\)-neighborhood of \(f\), we construct a \(C^\infty\) diffeomorphism whose skeleton has cardinality one in Section~\ref{six}. Finally, we show that the constructed \(f\) has a mixed center in Section~\ref{seven} and \ref{eight}. The proof of Theorem~\ref{main} is summarized in the final section~\ref{nine}.

\section{Definitions and Tools Used in This Work}\label{two}

In this section, we assume that \(f\) admits a partially hyperbolic splitting
\[
TM = E^{uu} \oplus_{\succ} E^{cu} \oplus_{\succ} E^{cs}.
\]

\subsection{Some Properties of Cone Fields and Domination}

Let \(E\) and \(F\) be two continuous subbundles with trivial intersection, i.e., their intersection consists only of the zero vector. For simplicity, we will omit the base points of tangent vectors in what follows.
For convenience, we introduce the following cone, which differs slightly from the classical definition (since we require certain subbundles to be invariant).  
For any \( \alpha > 0 \), the \textbf{ cone field} with respect to \( E \) and \( F \) is defined by
\[
\mathscr{C}_{\alpha}(E,F)
=  \{ v_1 + v_2 \in E \oplus F : \|v_2\| \le \alpha \|v_1\|, v_1\in E,v_2\in F \}
\]
 whenever
\begin{equation}\label{cones1}
Df(E \oplus F)=E \oplus F
\quad\text{and}\quad
Df(F)=F.
\end{equation}
Moreover, we say that the cone $\mathscr{C}_{\alpha}(E,F)$ is \textbf{forward-invariant}  (resp. \textbf{backward-invariant}) under $f$  if there exists a constant \(\kappa \in(0,1)\) such that
\[
Df\bigl(\mathscr{C}_{\alpha}(E,F)\bigr)
\subset \mathscr{C}_{\kappa \alpha}(E,F) \quad\text{(resp. }
Df^{-1}\bigl(\mathscr{C}_{\alpha}(E,F)\bigr)
\subset \mathscr{C}_{\kappa \alpha}(E,F)\text{)}.
\]

\begin{bsc}\label{fact0}
Let \( \mathbb{R} \) denote the set of real numbers.  
Let
\[
h : R_1 \times R_2 \longrightarrow R_1 \times R_2
\]
be a \( C^1 \) diffeomorphism, where \( R_1 = R_2 = \mathbb{R} \).
Assume that the tangent map of \( h \) has the form
\[
Dh(x) =
\begin{pmatrix}
\chi_{11}(x) & 0 \\
\chi_{12}(x) & \chi_{22}(x)
\end{pmatrix},
\]
and satisfies
\begin{equation}\label{youjie111}
\sup_{x \in \mathbb{R}^2} \frac{|\chi_{22}(x)|}{|\chi_{11}(x)|} < 1,
\qquad
\sup_{x \in \mathbb{R}^2} \frac{|\chi_{12}(x)|}{|\chi_{11}(x)|} < +\infty .
\end{equation}
For clarity, \( \mathscr{C}_{\alpha}(R_1, R_2) \) is defined by 
\[
\left\{ \begin{pmatrix} x_1 \\ x_2 \end{pmatrix} : |x_2| \leq \alpha |x_1| \right\}.
\]
\begin{lem}\label{fact1}
We have the following:
\begin{itemize}
    \item There exists \( \alpha > 0 \) such that the cone field
    \[
    \mathscr{C}_{\alpha}(R_1, R_2)
    \]
    is forward invariant under \( h \).
    \item When \( \chi_{11}(x) = \lambda \) for all \( x \in \mathbb{R}^2 \) and \( \mathscr{C}_{\alpha}(R_1, R_2) \) is forward invariant, then
    \[
    \frac{\lambda^n}{\sqrt{1+\alpha^2}} \leq \| Dh^n|_{\mathscr{C}_{\alpha}(R_1, R_2)} \| \leq \lambda^n \sqrt{1 + \alpha^2}.
    \]
In particular, 
\[
\frac{\lambda^n}{\sqrt{1+\alpha^2}} \leq \| Dh^n |_{\mathscr{C}_{\alpha}(R_1, R_2)} \|
\]
holds true for all \( n \geq 1 \), even without the assumption that \( \mathscr{C}_{\alpha}(R_1, R_2) \) is forward invariant.
    \item When
\[
0 < \chi_{\min} \leq |\chi_{11}(x)| \leq \chi_{\max} \quad \text{for all } x \in \mathbb{R}^2,
\]
where \( \chi_{\min} \) and \( \chi_{\max} \) are positive constants, and \( \mathscr{C}_{\alpha}(R_1, R_2) \) is forward invariant, then
    \[
    \frac{\chi_{\min}^{n}}{\sqrt{1 + \alpha^{2}}} \le \bigl\| Dh^{n} \big|_{\mathscr{C}_{\alpha}(R_1, R_2)} \bigr\| \le \chi_{\max}^{n} \sqrt{1 + \alpha^{2}}.
    \]
\end{itemize}
\end{lem}
\begin{proof}
For any vector \(v=\begin{pmatrix}1\\ \epsilon_2\end{pmatrix} \in \mathscr{C}_{\alpha}(R_1, R_2)\),\[
Dh(x)v = 
\begin{pmatrix}
\chi_{11}(x) & 0 \\
\chi_{12}(x) & \chi_{22}(x)
\end{pmatrix}
\begin{pmatrix}
1 \\
\epsilon_2
\end{pmatrix} 
= \begin{pmatrix} 
\chi_{11}(x) \\
\chi_{12}(x) + \chi_{22}(x)\epsilon_2 
\end{pmatrix}.
\]
Thus, in order to ensure that \( \mathscr{C}_{\alpha}(R_1, R_2) \) is forward invariant, we need to find \( \alpha > 0 \) and \( \kappa \in (0, 1) \) such that:
\begin{equation}\label{zhengx}
\frac{|\chi_{12}(x) + \chi_{22}(x)\epsilon_2|}{|\chi_{11}(x)|} \leq \alpha \kappa.
\end{equation}
However,
\[
\frac{|\chi_{12}(x) + \chi_{22}(x)\epsilon_2|}{|\alpha \chi_{11}(x)|} 
\leq \sup_{x \in \mathbb{R}^2} \frac{|\chi_{22}(x)|}{|\chi_{11}(x)|} + \sup_{x \in \mathbb{R}^2} \frac{|\chi_{12}(x)|}{\alpha |\chi_{11}(x)|}.
\]
It follows from assumption~\eqref{youjie111} that we can choose \( \alpha \) large enough to guarantee the existence of \( \kappa \in (0, 1) \) such that inequality~\eqref{zhengx} holds.

Next, we prove the second item. For any vector \( v = \begin{pmatrix} 1 \\ \epsilon_2 \end{pmatrix} \in \mathscr{C}_{\alpha}(R_1, R_2) \), combine the Pythagorean theorem and the forward invariance, the value of \( \| Dh^n(v) \| \) satisfies
\[
\lambda^n \leq \| Dh^n(v) \| \leq \lambda^n \sqrt{1+\alpha^2}, \quad \text{and} \quad 1 \leq \| v \| \leq \sqrt{1+\alpha^2}.
\]
Consequently, the operator norm of \( Dh^n \) restricted to the cone \( \mathscr{C}_{\alpha}(R_1, R_2) \) satisfies
\[
\frac{\lambda^n}{\sqrt{1+\alpha^2}} \leq \| Dh^n|_{\mathscr{C}_{\alpha}(R_1, R_2)} \| \leq \lambda^n \sqrt{1+\alpha^2}.
\]

The third item is similar to the estimation in the second item. We obtain
\[
\frac{\chi_{\min}^{n}}{\sqrt{1+\alpha^{2}}}
\le
\bigl\| Dh^{n} \big|_{\mathscr{C}_{\alpha}(R_1, R_2)} \bigr\|
\le
\chi_{\max}^{n} \sqrt{1+\alpha^{2}}.
\]
\end{proof}
\end{bsc}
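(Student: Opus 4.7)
The plan is to establish forward invariance first, then bootstrap the norm estimates from the triangular structure of $Dh$.

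For the first item, I would pick any nonzero vector in $\mathscr{C}_{\alpha}(R_1, R_2)$, rescale so that it reads $v = (1, \epsilon_2)^{T}$ with $|\epsilon_2| \leq \alpha$, and compute $Dh(x)v = (\chi_{11}(x),\, \chi_{12}(x) + \chi_{22}(x)\epsilon_2)^{T}$. Forward invariance with contraction factor $\kappa \in (0,1)$ reduces to verifying the uniform estimate
\[
\frac{|\chi_{12}(x) + \chi_{22}(x)\epsilon_2|}{|\chi_{11}(x)|} \leq \kappa \alpha,
\]
which, after dividing by $\alpha$ and applying the triangle inequality, is bounded above by $\sup_x |\chi_{22}/\chi_{11}| + \alpha^{-1}\sup_x |\chi_{12}/\chi_{11}|$. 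The first term is strictly less than $1$ by hypothesis~\eqref{youjie111}, and the second is finite and shrinks to $0$ as $\alpha \to \infty$, so for $\alpha$ large enough a single $\kappa < 1$ works uniformly in $x$ and $\epsilon_2$.

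For the second and third items I would use the invariance combined with the fact that the lower-triangular form makes the first coordinate evolve autonomously. Writing $v = (1, \epsilon_2)^{T}$ and $Dh^n(x)v = (a_n(x), b_n(x))^{T}$, the triangular structure gives $a_n(x) = \prod_{i=0}^{n-1} \chi_{11}(h^{i}(x))$, so $a_n \equiv \lambda^n$ under the hypotheses of item 2 and $\chi_{\min}^n \leq |a_n| \leq \chi_{\max}^n$ under those of item 3. Forward invariance then forces $|b_n| \leq \alpha |a_n|$, and Pythagoras yields $|a_n| \leq \|Dh^n(v)\| \leq |a_n|\sqrt{1+\alpha^2}$, while $1 \leq \|v\| \leq \sqrt{1+\alpha^2}$. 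Dividing the resulting sandwiches gives exactly the stated operator-norm bounds on $Dh^n|_{\mathscr{C}_\alpha(R_1,R_2)}$.

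The small refinement at the end of item 2, that the lower bound $\lambda^n/\sqrt{1+\alpha^2}$ survives even without forward invariance, essentially comes for free: even if $|b_n| \leq \alpha|a_n|$ is no longer available, the autonomous computation $a_n = \lambda^n$ and the crude inequality $\|Dh^n(v)\| \geq |a_n|$ still yield $\|Dh^n(v)\| \geq \lambda^n$, while $\|v\| \leq \sqrt{1+\alpha^2}$ for vectors in the cone. The only genuinely substantive step is the first one, namely arranging a single $\alpha$ that works uniformly over all of $\mathbb{R}^2$; this is exactly what the global supremum conditions in \eqref{youjie111} are designed to provide, and once $\alpha$ is fixed the remainder is a bookkeeping exercise in Pythagorean norm comparison.
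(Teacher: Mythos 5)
Your proof is correct and follows essentially the same route as the paper's: the same normalization $v = (1,\epsilon_2)^{T}$ and triangle-inequality bound for forward invariance, and the same Pythagorean sandwich for the norm estimates. The one place you are slightly more explicit than the paper is in writing out the autonomous cocycle $a_n(x) = \prod_{i=0}^{n-1}\chi_{11}(h^i(x))$ for the first coordinate; the paper leaves this implicit, but your spelling it out is exactly what makes the ``lower bound without forward invariance'' remark at the end of item two transparent, since $\|Dh^n(v)\| \ge |a_n|$ holds unconditionally. This is a welcome clarification, not a different method.
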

\begin{bsc}\label{fact2}
Let
\[
A(x)=
\renewcommand{\arraystretch}{2.9}
\begin{pmatrix}
\bar{\lambda} & 0 & 0 & 0 \\
\gamma_{21}(x) & \gamma_{22}(x) & \gamma_{23}(x) & \gamma_{24}(x) \\
\gamma_{31}(x) & \gamma_{32}(x) & \gamma_{33}(x) & \gamma_{34}(x)\\
\gamma_{41}(x) & \gamma_{42}(x) & \gamma_{43}(x) & \gamma_{44}(x) 
\end{pmatrix},
\qquad
v=\begin{pmatrix}1\\ \epsilon_2\\ \epsilon_3\\ \epsilon_4\end{pmatrix}\in\mathbb{R}^4.
\]
Suppose that 
\begin{equation}\label{beikongzhi}
|\gamma_{ij}(x)| \le M < +\infty \quad \text{for all } i,j,x, \quad \text{and} \quad \sqrt{\sum_{k=2,3,4} |\epsilon_k|^2} \le \alpha.
\end{equation}
It follows from the calculations that:
\[
A(x)v=\renewcommand{\arraystretch}{2.9}
\begin{pmatrix}
\bar{\lambda}\\
\gamma_{21}(x)+\gamma_{22}(x)\epsilon_2+\cdots+\gamma_{24}(x)\epsilon_4\\
\gamma_{31}(x)+\gamma_{32}(x)\epsilon_2+\cdots+\gamma_{34}(x)\epsilon_4\\
\gamma_{41}(x)+\gamma_{42}(x)\epsilon_2+\cdots+\gamma_{44}(x)\epsilon_4
\end{pmatrix}.
\]
By our setup in~\eqref{beikongzhi}, there exists \( \widetilde{M} > 0 \) such that
\[
\sqrt{\sum_{i=1,2,3} \left( \left| \gamma_{i1}(x) \right| + \sum_{j=2}^4 \left| \gamma_{ij}(x) \right| \cdot |\epsilon_j| \right)^2} \leq \widetilde{M}.
\]
The following diagram shows the different angles corresponding to various values of \( \bar{\lambda} \).
\begin{equation}
	\includegraphics[width=1\textwidth]{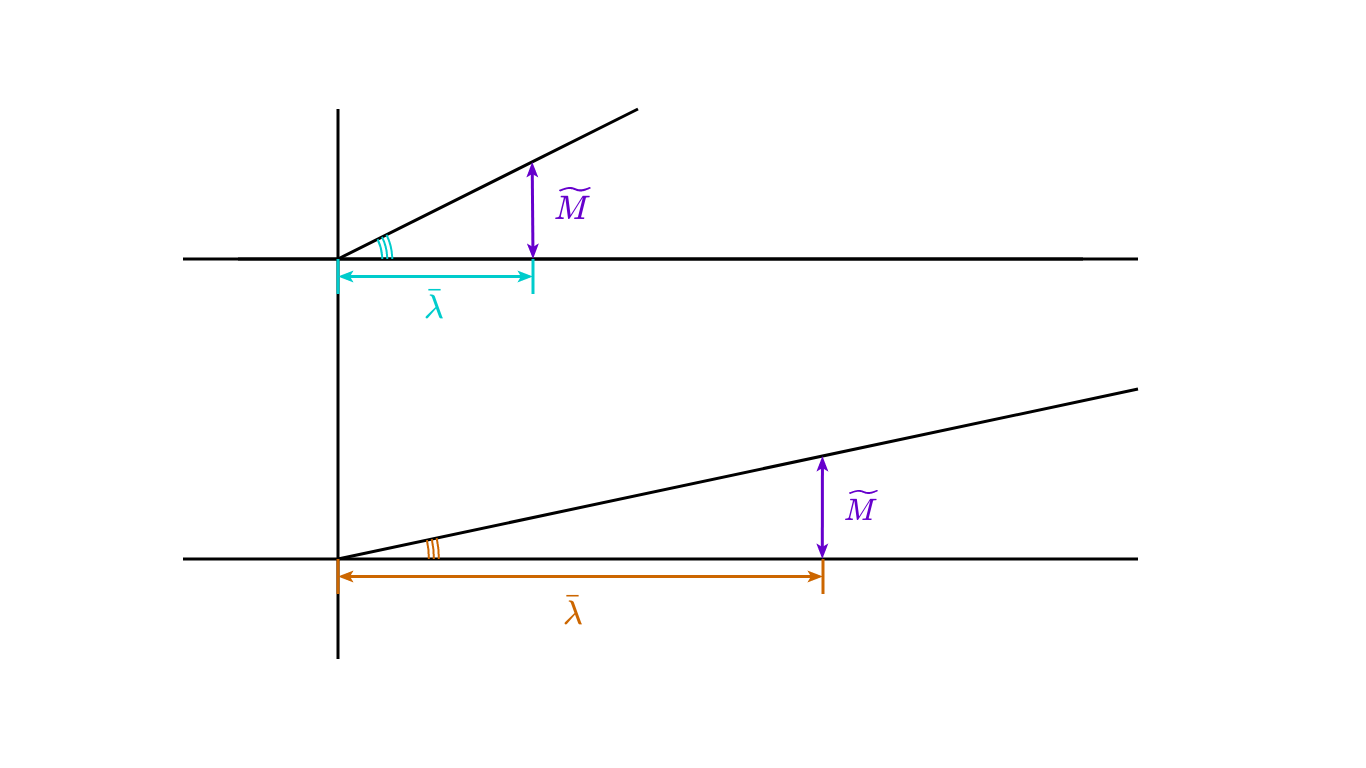}
\end{equation}
Thus, we can always select \( \bar{\lambda} \) sufficiently large such that \( \mathscr{C}_{\alpha}(\mathbb{R}, \mathbb{R}^3) \) is forward invariant under every matrix \( A(x) \).  Analogously to Lemma~\ref{fact1}, after fixing \( \bar{\lambda} \) such that the cone is forward invariant, for every
\(v \in \mathscr{C}_{\alpha}(\mathbb{R},\mathbb{R}^3)\) we have
\[
\|A^n(x) v\|\ \ge\ \frac{\bar{\lambda}^{n}}{\sqrt{1+3\alpha^2}} \|v\|.
\]
\end{bsc}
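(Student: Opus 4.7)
The plan is to parallel the three-step argument of Lemma~\ref{fact1}, the only new point being that three lower coordinates must be absorbed simultaneously into a single dominant one. First I would record the uniform upper bound $\widetilde{M}$ claimed after \eqref{beikongzhi}: each lower coordinate of $A(x)v$ has the form $\gamma_{i1}(x)+\sum_{j=2}^{4}\gamma_{ij}(x)\epsilon_{j}$, whose absolute value is at most $M(1+\sqrt{3}\alpha)$ by Cauchy--Schwarz together with the pointwise bound $|\gamma_{ij}|\le M$ and the cone constraint $\sqrt{\epsilon_{2}^{2}+\epsilon_{3}^{2}+\epsilon_{4}^{2}}\le \alpha$. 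The Euclidean combination of three such contributions produces a constant $\widetilde{M}$ depending only on $M$ and $\alpha$, and in particular independent of $x$ and $\bar{\lambda}$.

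Next I would establish forward invariance of the cone for $\bar{\lambda}$ sufficiently large. Since the first row of $A(x)$ is $(\bar{\lambda},0,0,0)$, the image of any $v$ with $v_{1}=1$ has first coordinate exactly $\bar{\lambda}$, while the $\ell^{2}$-norm of its last three coordinates is at most $\widetilde{M}$. The forward-invariance requirement
\[
\frac{\bigl\|(A(x)v)_{\text{last 3}}\bigr\|}{|(A(x)v)_{1}|}\;\le\;\kappa\alpha
\]
therefore reduces to $\widetilde{M}/\bar{\lambda}\le \kappa\alpha$, which is achieved as soon as $\bar{\lambda}>\widetilde{M}/\alpha$, with a uniform $\kappa\in(0,1)$. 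Because $\widetilde{M}$ is independent of $x$, a single choice of $\bar{\lambda}$ works for every matrix $A(x)$ simultaneously, which is exactly what the accompanying figure illustrates schematically.

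Finally I would deduce the growth estimate. The structure of the first row of $A(\cdot)$ forces the first coordinate of $A(x)w$ to equal $\bar{\lambda}$ times the first coordinate of $w$ for every vector $w$, so by induction on $n$ the first coordinate of $A^{n}(x)v$ equals $\bar{\lambda}^{n}v_{1}$. This immediately gives $\|A^{n}(x)v\|\ge \bar{\lambda}^{n}|v_{1}|$, and combining with the elementary bound $\|v\|\le \sqrt{1+3\alpha^{2}}\,|v_{1}|$ (in fact $\sqrt{1+\alpha^{2}}$ suffices from the cone condition, but the looser constant is used later) yields the claimed inequality. The only delicate point in the whole argument is ensuring that a single $\bar{\lambda}$ controls every $A(x)$ along an arbitrary orbit, and this is precisely what the $x$-uniformity of $M$, and hence of $\widetilde{M}$, is designed to provide; no iterative compounding of errors occurs, since at each step the lower block is rebounded by the same $\widetilde{M}$ before being divided by $\bar{\lambda}$.
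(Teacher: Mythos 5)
Your argument is correct and follows the same route the paper sketches: triangle inequality plus Cauchy--Schwarz to get a uniform bound $\widetilde{M}$ on the lower three coordinates, the observation that the first coordinate is exactly $\bar{\lambda}$ to reduce forward-invariance to $\widetilde{M}/(\bar{\lambda}\alpha)\le\kappa<1$, and the first-coordinate-only inductive argument for the norm lower bound. You also correctly flag that the cone condition actually gives $\|v\|\le\sqrt{1+\alpha^{2}}\,|v_{1}|$, so the paper's constant $\sqrt{1+3\alpha^{2}}$ in the final estimate is a (harmless) weakening.
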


We also have
\begin{lem}\cite{Yoccoz}\label{cone}
If \(\mathscr{C}_{\alpha}(E,F)\) is forward-invariant (resp. backward-invariant) under $f$, then there exists a  dominated splitting
\[
E\oplus F=\widetilde{E} \oplus_\succ F
\quad (\text{resp. } E\oplus F=F \oplus_\succ \widetilde{E})
\]
such that
\[
\widetilde{E} \subset \mathscr{C}_{\alpha}(E,F)
\quad
(\text{resp. } \widetilde{E} \subset \mathscr{C}_{\alpha}(E,F)).
\]
\end{lem}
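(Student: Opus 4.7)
The plan is to prove both statements using the classical graph-transform / invariant cone construction. It suffices to handle the forward-invariant case in detail, since the backward-invariant case follows by applying the same argument to \(f^{-1}\).

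\textbf{Step 1 (candidate invariant subbundle).} Iterating the hypothesis \(Df(\mathscr{C}_\alpha(E,F))\subset\mathscr{C}_{\kappa\alpha}(E,F)\) yields \(Df^n(\mathscr{C}_\alpha(E,F)(f^{-n}x))\subset\mathscr{C}_{\kappa^n\alpha}(E,F)(x)\). I would define
\[
\widetilde E(x):=\bigcap_{n\ge 0} Df^n\bigl(\mathscr{C}_\alpha(E,F)(f^{-n}x)\bigr),
\]
and claim it is a continuous \(Df\)-invariant subbundle of dimension \(\dim E\), sitting inside \(\mathscr{C}_\alpha(E,F)\).

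\textbf{Step 2 (graph-transform formalism).} To justify the claim, parametrize every \(\dim E\)-dimensional subspace of \(E(x)\oplus F(x)\) transverse to \(F(x)\) as the graph of a linear map \(L:E(x)\to F(x)\); membership in \(\mathscr{C}_\alpha(E,F)(x)\) corresponds to \(\|L\|\le\alpha\). Using the block form of \(Df\) in the splitting \(E\oplus F\),
\[
Df=\begin{pmatrix} A & 0 \\ C & B\end{pmatrix},
\]
which is lower triangular because \(Df(F)=F\) and in which \(A\) is invertible (since \(Df\) is a bijection of \(E\oplus F\) and \(E\cap F=\{0\}\)), the push-forward \(Df_*L=(C+BL)A^{-1}\) defines a graph transform on sections. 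Forward invariance of the cone translates precisely into the statement that this transform sends the \(\alpha\)-ball of sections into the \(\kappa\alpha\)-ball. Iterating backwards yields a Cauchy sequence in the sup-norm (with contraction rate \(\kappa\)), whose limit \(L^\ast\) is the unique fixed section; its graph is \(\widetilde E(x)\), giving continuity and \(Df\)-invariance automatically.

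\textbf{Step 3 (domination).} With \(\widetilde E\oplus F=E\oplus F\) and both summands \(Df\)-invariant, \(Df|_{E\oplus F}\) is block-diagonal in the splitting \(\widetilde E\oplus F\), say \(Df=\widetilde A\oplus B\). To extract the domination \(\widetilde E\oplus_\succ F\), I would unpack the cone contraction: for nonzero \(v\in\widetilde E\) and \(w\in F\), decompose \(v+tw\) in \(E\oplus F\) coordinates (using \(\widetilde E\subset\mathscr{C}_\alpha(E,F)\)), apply \(Df^n\), and use \(Df^n v\in\widetilde E\subset\mathscr{C}_\alpha(E,F)\) together with \(Df^n(v+tw)\in\mathscr{C}_{\kappa^n\alpha'}(E,F)\) to deduce \(\|B^n w\|/\|\widetilde A^n v\|\le c\,\kappa^n\|w\|/\|v\|\), with \(c\) depending on \(\alpha\) and on a uniform lower bound on the angle between \(E\) and \(F\) (automatic from continuity, trivial intersection, and compactness of \(M\)). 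The uniform angle bound between \(\widetilde E\) and \(F\) then follows from \(\widetilde E\subset\mathscr{C}_\alpha(E,F)\).

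\textbf{Step 4 and main obstacle.} For the backward-invariant case, apply Steps 1--3 to \(f^{-1}\) (under which the cone is forward invariant) to produce an \(f\)-invariant \(\widetilde E\subset\mathscr{C}_\alpha(E,F)\). The roles of \(\widetilde E\) and \(F\) in the dominated splitting are reversed, giving \(E\oplus F=F\oplus_\succ\widetilde E\). The main obstacle is Step 3: one must carefully convert the cone contraction factor \(\kappa\) into an operator-norm comparison between the invariant pieces \(\widetilde A\) and \(B\), keeping in mind that \(E\) is only used to define the cone and is not itself \(Df\)-invariant, so the natural working coordinates are \(\widetilde E\oplus F\) with \(E\) playing only an auxiliary role.
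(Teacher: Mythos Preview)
The paper does not prove this lemma; it is stated with the citation \cite{Yoccoz} and taken as a known result from the literature, so there is no ``paper's own proof'' to compare against. Your proposal follows the standard cone-field/graph-transform route, which is the classical argument for this result.

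One genuine slip: in Step~1 you claim $Df^n(\mathscr{C}_\alpha(E,F))\subset\mathscr{C}_{\kappa^n\alpha}(E,F)$, but the paper's definition of forward invariance only gives $Df(\mathscr{C}_\alpha)\subset\mathscr{C}_{\kappa\alpha}$ for the fixed width $\alpha$, not for all widths. Iterating naively only yields $Df^n(\mathscr{C}_\alpha)\subset\mathscr{C}_{\kappa\alpha}$ for all $n\ge 1$, which does not shrink. The fix is already implicit in your Step~2: since the graph transform $T(L)=(C+BL)A^{-1}$ is affine and maps $\{\|L\|\le\alpha\}$ into $\{\|L\|\le\kappa\alpha\}$, applying the condition to both $L$ and $-L$ and subtracting shows the linear part $L\mapsto BLA^{-1}$ has operator norm at most $\kappa$. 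This is what makes the iterates Cauchy and produces the unique fixed section $L^\ast$.

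That same bound on the linear part is also the missing ingredient for your Step~3. Once you know $\|B L A^{-1}\|\le\kappa\|L\|$ uniformly, changing basis from $E\oplus F$ to $\widetilde E\oplus F$ (with constants controlled by $\alpha$ and the uniform angle between $E$ and $F$) converts this into the required domination estimate $\|Df^n|_F\|\,/\,m(Df^n|_{\widetilde E})\le c\,\kappa^n$. So the obstacle you flag is real but resolved by the same $\pm L$ trick; you should make that step explicit rather than leaving it as a heuristic cone argument.
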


\begin{lem}\cite[Exercise in Chapter 4]{Wen2016}\label{zhibiao}
Let \( f \) admit two dominated splittings
\[
TM = E_1 \oplus_{\succ} F_1 = E_2 \oplus_{\succ} F_2.
\]
\begin{enumerate}
    \item If \( \dim E_1 < \dim E_2 \), then \( E_1 \subset E_2 \).
    \item If \( \dim E_1 = \dim E_2 \), then \( E_1 = E_2 \) and \( F_1 = F_2 \).
\end{enumerate}
\end{lem}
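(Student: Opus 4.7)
The plan is to reduce both parts to the single pointwise transversality statement \(E_1(x) \cap F_2(x) = \{0\}\) at every \(x \in M\), and then to bootstrap this from a fiber-wise intersection into the inclusion \(E_1 \subset E_2\) by using backward iteration together with compactness of \(M\) and continuity of the bundles. First I would record the backward version of domination: substituting \(v_E = Df^{-n}(u_E)\) and \(v_F = Df^{-n}(u_F)\) into the defining inequality of \(E_i \oplus_{\succ} F_i\) yields
\[
\frac{\|Df^{-n}(u_E)\|}{\|Df^{-n}(u_F)\|} \ \leq\ c\sigma^n\,\frac{\|u_E\|}{\|u_F\|}
\qquad\text{for all nonzero } u_E \in E_i,\ u_F \in F_i,
\]
so backward iterates contract \(E_i\)-directions exponentially faster than \(F_i\)-directions.

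To prove \(E_1(x)\cap F_2(x)=\{0\}\), I would suppose a nonzero \(v\in E_1(x)\cap F_2(x)\) exists and choose \(u\in F_1(x)\) whose \(E_2\)-component \(u_2\) is nonzero; such a \(u\) exists because otherwise \(F_1(x)\subset F_2(x)\), which in Part 1 violates \(\dim F_1 > \dim F_2\) and in Part 2 forces \(F_1=F_2\) and hence \(v\in E_1\cap F_1=\{0\}\). Writing \(u=u_2+u_2'\) with \(u_2'\in F_2\), domination for splitting 2 gives \(\|Df^n u_2'\|\leq \tfrac12\|Df^n u_2\|\) for \(n\) large, so \(\|Df^n u\|\geq \tfrac12\|Df^n u_2\|\). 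Chaining the domination for splitting 1 applied to \((u,v)\) with the domination for splitting 2 applied to \((u_2,v)\) produces
\[
\tfrac12\|Df^n u_2\|\ \leq\ \|Df^n u\|\ \leq\ c\sigma^n\|Df^n v\|\,\frac{\|u\|}{\|v\|}\ \leq\ c^2\sigma^{2n}\|Df^n u_2\|\,\frac{\|u\|}{\|u_2\|},
\]
which is impossible for large \(n\); hence the intersection is trivial.

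To conclude \(E_1\subset E_2\) in both parts, take any \(v\in E_1(x)\) and decompose \(v=v_2+w_2\) in splitting 2. If \(w_2\neq 0\), the backward estimate above yields \(\|Df^{-n}v_2\|/\|Df^{-n}w_2\|\to 0\) exponentially, so the unit vectors \(\hat v_n:=Df^{-n}(v)/\|Df^{-n}(v)\|\), which lie in \(E_1(f^{-n}(x))\), drift into the direction of \(Df^{-n}(w_2)/\|Df^{-n}(w_2)\|\in F_2(f^{-n}(x))\). Extracting a subsequence with \(f^{-n_k}(x)\to y\) by compactness of \(M\), and using continuity of \(E_1\) and \(F_2\), the limiting unit vector sits in \(E_1(y)\cap F_2(y)\), contradicting the previous paragraph. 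This establishes Part 1 and the inclusion \(E_1\subset E_2\) of Part 2. The equality \(F_1=F_2\) in Part 2 follows by an analogous direct argument: given \(v\in F_1\), decompose \(v=v_2+w_2\) with \(v_2\in E_1=E_2\) and \(w_2\in F_2\); applying domination 1 to \((v,v_2)\) and domination 2 to \((w_2,v_2)\) makes both \(\|Df^n v\|\) and \(\|Df^n w_2\|\) decay exponentially relative to \(\|Df^n v_2\|\), forcing \(v_2=0\). The main subtlety is the compactness-plus-continuity step: one must carefully make sense of the limit of unit tangent vectors that live in different fibers of \(TM\), which requires fixing a local trivialization around \(y\); the remainder is a routine bookkeeping of the two domination estimates.
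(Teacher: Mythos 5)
Since the paper cites this statement as an exercise from \cite{Wen2016} and does not supply a proof of its own, there is no in-paper argument to compare against; the most I can do is assess your proposal on its merits. Your argument is correct, and it is essentially the standard textbook proof of uniqueness of dominated splittings. The two moves you make are exactly the usual ones: first establish the pointwise transversality \(E_1(x)\cap F_2(x)=\{0\}\) by chaining the two domination estimates through an auxiliary vector \(u\in F_1(x)\) with nonzero \(E_2\)-component (the dimension hypotheses guarantee such a \(u\) exists), then upgrade transversality to the inclusion \(E_1\subset E_2\) via backward iteration, compactness of \(M\), and continuity of the bundles. The calculations in both steps check out: the chained inequality \(\tfrac12\|Df^n u_2\|\le c^2\sigma^{2n}\|Df^n u_2\|\,\|u\|/\|u_2\|\) is indeed impossible for large \(n\), and the backward estimate \(\|Df^{-n}v_2\|/\|Df^{-n}w_2\|\le c\sigma^n\|v_2\|/\|w_2\|\) does force the iterated unit vectors into the \(F_2\)-direction. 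The argument for \(F_1=F_2\) in Part 2 (decompose \(v\in F_1\) as \(v_2+w_2\) and show \(v_2=0\) because both \(\|Df^n v\|\) and \(\|Df^n w_2\|\) are exponentially small relative to \(\|Df^n v_2\|\)) also works.

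Two small remarks. First, the ``subtlety'' you flag at the end—taking a limit of unit vectors living in different fibers—is most cleanly handled not by fixing a local trivialization around \(y\) but by observing that the unit tangent bundle of \(M\) is compact, and that \(E_1\) and \(F_2\), being continuous subbundles over a compact base, intersect the unit tangent bundle in closed subsets; the convergent subsequence then lands in both. Second, you implicitly use the same constants \((c,\sigma)\) for both splittings; this is harmless since one can always replace them by \(\max(c_1,c_2)\) and \(\max(\sigma_1,\sigma_2)<1\), but it is worth saying.
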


\begin{lem}\label{mutually}
If \( f \) has dominated splittings \( G_1 \oplus_{\succ} G_2 \) and \( G_2 \oplus_{\succ} G_3 \), then we have \( G_1 \oplus_{\succ} (G_2 \oplus G_3) \).  By considering the reverse direction (which is similar to the forward direction), we naturally also have \( (G_1 \oplus G_2) \oplus_\succ G_3 \).
\end{lem}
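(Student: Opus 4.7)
\noindent\emph{Proof proposal.} The plan is to prove the forward direction \(G_1 \oplus_\succ (G_2\oplus G_3)\) first and then argue the reverse analogously. The preparatory step, which is used in both halves, is that the two hypothesized dominations compose: given \(G_1 \oplus_\succ G_2\) with constants \((c_1,\sigma_1)\) and \(G_2 \oplus_\succ G_3\) with constants \((c_2,\sigma_2)\), inserting any nonzero intermediate \(v_2 \in G_2\) into the product of the two domination inequalities yields
\[
\frac{\|Df^n v_3\|}{\|Df^n v_1\|} \;\le\; c_1 c_2 (\sigma_1\sigma_2)^n \frac{\|v_3\|}{\|v_1\|}
\qquad\text{for all nonzero } v_1\in G_1,\ v_3\in G_3.
\]
A second standing tool I will quote is the uniform lower bound \(\|u+w\| \ge K^{-1}\max(\|u\|,\|w\|)\) whenever \(u\) and \(w\) lie in subbundles whose pairwise angle is uniformly bounded away from \(0\); the excerpt records that the notation \(\oplus_\succ\) already carries such an angle bound, so this constant \(K\) is independent of the basepoint and the iterate.

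For \(G_1 \oplus_\succ (G_2\oplus G_3)\), invariance and constant dimension of \(G_2\oplus G_3\) are automatic, and triviality of \(G_1 \cap (G_2\oplus G_3)\) will follow from the domination inequality I am about to verify (a nonzero vector in the intersection would force \(1 \le c\sigma^n \to 0\)). For the inequality itself, given \(v_1 \in G_1\setminus\{0\}\) and \(v = v_2+v_3 \in G_2\oplus G_3\) with \(v_i\in G_i\), the angle bound between \(G_2\) and \(G_3\) gives \(\|v_2\|,\|v_3\|\le K\|v\|\), and the triangle inequality plus the two domination inequalities (given and composed) yields
\[
\frac{\|Df^n v\|}{\|Df^n v_1\|}
\;\le\; c_1\sigma_1^n\frac{\|v_2\|}{\|v_1\|} + c_1c_2(\sigma_1\sigma_2)^n\frac{\|v_3\|}{\|v_1\|}
\;\le\; K c_1(1+c_2)\sigma_1^n \frac{\|v\|}{\|v_1\|},
\]
which is the required domination since \(\sigma_1<1\).

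For the reverse direction \((G_1\oplus G_2)\oplus_\succ G_3\), the situation is asymmetric: the numerator is the single \(G_3\)-vector but now \(v = v_1 + v_2 \in G_1\oplus G_2\) appears in the denominator, and the genuine obstacle is that neither \(\|v_1\|\) nor \(\|v_2\|\) can be uniformly lower-bounded by \(\|v\|\). I will handle this by case analysis. The angle bound between \(G_1\) and \(G_2\) (at every iterate) gives \(\|Df^n v\| \ge K^{-1}\max(\|Df^n v_1\|,\|Df^n v_2\|)\) and the elementary bound \(\|v\|\le 2\max(\|v_1\|,\|v_2\|)\). If \(\|v_1\| \ge \|v_2\|\), I apply the composed domination (\(G_1\) over \(G_3\)) and use \(\|v\|/\|v_1\|\le 2\) to obtain
\[
\frac{\|Df^n v_3\|}{\|Df^n v\|} \;\le\; K\,\frac{\|Df^n v_3\|}{\|Df^n v_1\|} \;\le\; 2Kc_1c_2(\sigma_1\sigma_2)^n\frac{\|v_3\|}{\|v\|};
\]
if instead \(\|v_2\| > \|v_1\|\), the same move with the given domination \(G_2 \oplus_\succ G_3\) gives \(2Kc_2\sigma_2^n \|v_3\|/\|v\|\). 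Taking the larger of the two resulting constants (and using \(\sigma_1\sigma_2<\sigma_2<1\)) furnishes the domination with uniform constants, completing the proof. The only delicate point throughout is this case split replacing a direct lower bound on the individual components.
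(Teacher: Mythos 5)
Your argument for the forward direction $G_1 \oplus_{\succ} (G_2 \oplus G_3)$ is essentially the paper's proof: compose the two dominations to control $v_3$ by $v_1$, use the uniform angle bound between $G_2$ and $G_3$ (the paper invokes the law of cosines to get $\|v_2\|+\|v_3\| \le K\|v_2+v_3\|$, which plays the same role as your $\|v_2\|,\|v_3\| \le K\|v\|$), and then apply the triangle inequality. The numerical constants differ slightly but the mechanism is identical.

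For the reverse direction $(G_1 \oplus G_2) \oplus_{\succ} G_3$, the paper only asserts that it follows "by considering the reverse direction (which is similar to the forward direction)" — the standard shortcut here is to apply the already-proven forward statement to $f^{-1}$, under which $G_1 \oplus_\succ G_2$ and $G_2 \oplus_\succ G_3$ become $G_3$-dominates-$G_2$ and $G_2$-dominates-$G_1$, so the forward result for $f^{-1}$ immediately yields $G_3 \oplus_\succ (G_2 \oplus G_1)$ for $f^{-1}$, i.e.\ $(G_1 \oplus G_2) \oplus_\succ G_3$ for $f$. You instead give a direct argument, correctly observing that a naive transposition of the forward argument fails because the combined vector now sits in the \emph{denominator}, so one cannot simply apply the triangle inequality; your case split on $\max(\|v_1\|,\|v_2\|)$, combined with the angle bound at the iterated basepoints to get $\|Df^n v\| \ge K^{-1}\max(\|Df^n v_1\|,\|Df^n v_2\|)$, handles this cleanly and is correct. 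The $f^{-1}$ duality route is shorter and avoids the case analysis, while your direct route is self-contained and makes the asymmetry between the two halves explicit — a genuine pedagogical plus, since the paper's "similar" hides exactly the point you flagged.
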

\begin{proof}
It is clear that we only need to prove that there exist some \( \widehat{c} > 0 \) and \( \widehat{\sigma} < 1 \) such that, for any vector \( v_i \in G_i \), \( i = 1, 2, 3 \), and for all \( n \geq 1 \), we have
\begin{equation}\label{mubiaodomin}
\frac{\| Df^n (v_2 + v_3) \|}{\| Df^n (v_1) \|} \leq \widehat{c} \, \widehat{\sigma}^n \frac{\| v_2 + v_3 \|}{\| v_1 \|}.
\end{equation}

Without loss of generality, we assume that the angles \( \langle G_2, G_3 \rangle \) between \( G_2 \) and \( G_3 \) are uniformly bounded below by \( \theta > 0 \), where \( \theta \in (0, \frac{\pi}{2}] \).
 It is clear that there exist \( c > 0 \) and \( \sigma \in (0, 1) \) such that 
\begin{equation}\label{jiajiaodomin}
\frac{\| Df^n (v_2) \|}{\| Df^n (v_1) \|} \leq c \sigma^n \frac{\| v_2 \|}{\| v_1 \|}, \quad \text{and} \quad \frac{\| Df^n (v_3) \|}{\| Df^n (v_1) \|} \leq c \sigma^n \frac{\| v_3 \|}{\| v_1 \|}.
\end{equation}
\begin{equation}
	\includegraphics[width=0.8\textwidth]{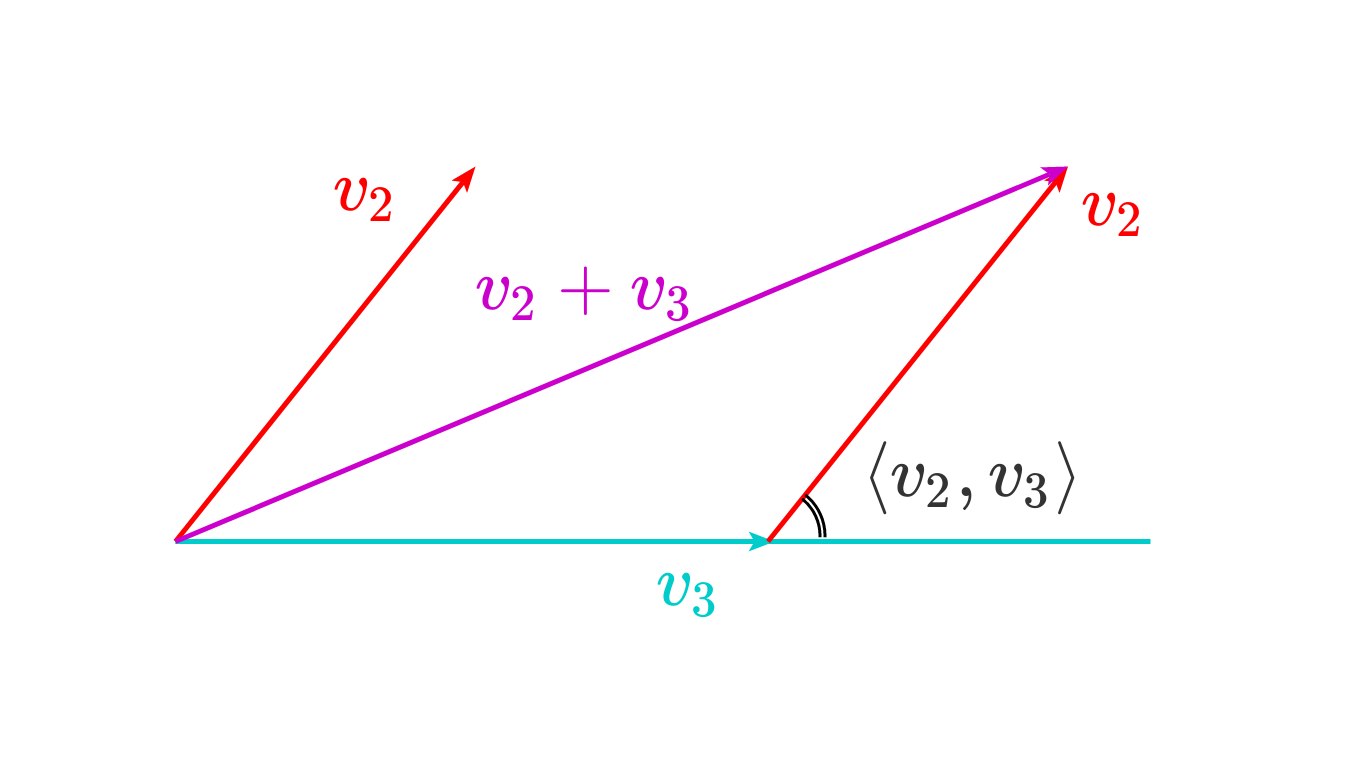}
\end{equation}
By the law of cosines, we have
\[
\| v_2 + v_3 \|^2 = \| v_2 \|^2 + \| v_3 \|^2 - 2 \| v_2 \| \| v_3 \| \cos(\pi - \langle v_2, v_3 \rangle).
\]
It follows that
\begin{align*}
\| v_2 + v_3 \|^2 &\geq \| v_2 \|^2 + \| v_3 \|^2 - 2 \| v_2 \| \| v_3 \| \cos \theta \\
&\geq (1 - \cos \theta) (\| v_2 \|^2 + \| v_3 \|^2) \\
&\geq \frac{(1 - \cos \theta)}{2} \left( \| v_2 \| + \| v_3 \| \right)^2.
\end{align*}
Thus, we have
\[
\| v_2 \| + \| v_3 \| \leq \sqrt{\frac{2}{1 - \cos \theta}} \| v_2 + v_3 \|.
\]
By combining this with inequality~\eqref{jiajiaodomin}, we can find the desired constants \( \widetilde{c} \) and \( \widetilde{\sigma} \) in inequality~\eqref{mubiaodomin}.
\end{proof}

\begin{lem}\label{daxiao}
Assume that \(E \oplus_{\succ} F\) is a dominated splitting. Then for any unit vectors \(v_E \in E\) and \(v_F \in F\), one has
\[
\frac{1}{n}\log \|Df^{n}(v_F)\|
\le
\frac{1}{n}\log \|Df^{n} (v_E)\|
+ \frac{\log c}{n}
+\log \sigma,
\]
where \(c\) and \(\sigma\) are the constants associated with the dominated splitting.
Consequently, for any ergodic measure \( \mu \), the largest Lyapunov exponent of \( \mu \) along \( F \) is strictly smaller than its smallest Lyapunov exponent along \( E \).
\end{lem}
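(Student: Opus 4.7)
The plan is to read off the first inequality directly from the definition of $E \oplus_{\succ} F$, and then deduce the exponent statement by letting $n \to \infty$ along suitably chosen unit vectors coming from the Oseledets decomposition of $\mu$.

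For the first assertion, I would take unit vectors $v_E \in E$ and $v_F \in F$ and apply the defining inequality of the dominated splitting, namely $\|Df^n(v_F)\|/\|Df^n(v_E)\| \le c\sigma^n \|v_F\|/\|v_E\|$. Because $\|v_E\| = \|v_F\| = 1$, this reduces to $\|Df^n(v_F)\| \le c\sigma^n \|Df^n(v_E)\|$; taking the logarithm and dividing by $n$ yields exactly the claimed bound.

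For the second assertion, let $\mu$ be an ergodic $f$-invariant measure. By the Oseledets multiplicative ergodic theorem, there is a full-measure set on which both $E$ and $F$ admit measurable invariant splittings into Oseledets subbundles whose Lyapunov exponents are constant along orbits. I would pick a generic point $x$ together with a unit vector $v_F \in F(x)$ lying in the Oseledets subspace realizing the largest exponent $\lambda_F^{\max}$ along $F$, and a unit vector $v_E \in E(x)$ lying in the Oseledets subspace realizing the smallest exponent $\lambda_E^{\min}$ along $E$. Then $\frac{1}{n}\log\|Df^n(v_F)\| \to \lambda_F^{\max}$ and $\frac{1}{n}\log\|Df^n(v_E)\| \to \lambda_E^{\min}$. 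Passing to the limit in the inequality from the first part, the $\log c / n$ term vanishes and we obtain $\lambda_F^{\max} \le \lambda_E^{\min} + \log\sigma$. Since $\sigma < 1$, we have $\log\sigma < 0$, giving the strict inequality $\lambda_F^{\max} < \lambda_E^{\min}$.

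There is no real obstacle here; the only mildly delicate point is that one must remember to pick $v_E$ in the subspace minimizing the exponent along $E$ (rather than any unit vector, whose exponent might a priori exceed $\lambda_E^{\min}$) and $v_F$ in the subspace maximizing the exponent along $F$, so that the pointwise estimate in the limit really does compare the extreme exponents on the two sides of the splitting. Once these choices are made, the conclusion is immediate from the uniform factor $\log\sigma$ supplied by domination.
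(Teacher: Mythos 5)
The paper states Lemma~\ref{daxiao} without giving any proof, so there is no "paper's argument" to compare against; your proof fills the gap with the natural argument, and it is correct. The first inequality is literally the defining domination estimate applied to unit vectors after taking logarithms and dividing by $n$, and the passage to Lyapunov exponents is sound: for an ergodic measure of an invertible $C^1$ diffeomorphism, the Oseledets theorem produces a genuine invariant splitting (not merely a filtration), the $Df$-invariant bundles $E$ and $F$ are saturated by Oseledets subspaces, and you correctly observe that $v_E$ must be chosen in the Oseledets subspace of $E(x)$ carrying $\lambda_E^{\min}$ (not an arbitrary unit vector, which would typically yield a larger limit) and $v_F$ in the one carrying $\lambda_F^{\max}$. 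Passing to the limit kills the $\log c/n$ term and the strictness comes from $\log\sigma<0$. No gap.
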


\subsection{Some Properties of Skeletons and the Mixed Center}

Recall that  \textbf{ stable manifold} of $x$ with respect to $f$ is defined by 
 $$ 
 W^s(x, f) := \left\{ y \in M \ \middle| \ \lim_{n \to +\infty} d\big(f^{n}(y), f^{n}(x)\big) = 0 \right\}. 
$$ 
 Similarly,  \textbf{ unstable manifold} of $x$ with respect to $f$ is defined by 
 $$ 
 W^u(x, f) := W^s\big(x, f^{-1}\big). 
 $$ 
The stable manifold of the orbit \( \text{Orb}(p, f) \) of a periodic point \( p \) with respect to \( f \) is given by:
\[
W^s(\text{Orb}(p, f),f): = \{ x \in M \mid \lim_{n \to +\infty}  d\big(f^{n}(x), \text{Orb}(p, f)\big)=0 \}.
\]
Similarly, \[ 
W^u(\text{Orb}(p, f), f) := W^s(\text{Orb}(p, f), f^{-1}).
\] 

A finite set \( S = \{r_1, r_2, \dots, r_k\} \) is called a \textbf{skeleton} with respect to $f$ if each \( r_i \in S \) is a hyperbolic periodic point with stable index \( \dim E^{cs} \), and \( S \) satisfies the following conditions:
\begin{enumerate}[a.]
\item For every \( C^1 \)-disk \( D \) transverse to \( E^{cs} \), there exists a point \( p \in S \) such that \( D \) intersects the stable manifold of the orbit of \( p \) transversely.
\item\label{feihengjie} For distinct \( p, q \in S \), the stable manifold \( W^s(\text{Orb}(p, f)) \) does not intersect the unstable manifold \( W^u(\text{Orb}(q, f)) \).
\end{enumerate}
For a diffeomorphism \( g \) sufficiently close to \( f \), we denote by \( q(g) \) the continuation of a hyperbolic periodic point \( q \) of \( f \). Similarly, the continuation of a skeleton \( S = \{r_1, \dots, r_k\} \) of \( f \) under \( g \) is denoted by
 \[ 
 S(g) = \{r_1(g), \dots, r_k(g)\}. 
 \] 

\begin{pro}\label{sat}\cite[Section 11.2]{CLM}
If \(f\) is of class \(C^{1+}\), then the set of Gibbs \(u\)-states is nonempty, weak\(^*\)-compact, and convex.
Moreover, almost every ergodic component of a Gibbs \(u\)-state is itself a Gibbs \(u\)-state.
\end{pro}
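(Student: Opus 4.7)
\textbf{Proof plan for Proposition~\ref{sat}.}
The plan is to establish the four assertions in order, relying on the standard construction of Gibbs \(u\)-states via iterated Lebesgue measure on strong unstable disks together with the entropy characterization~\eqref{LS}.

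\emph{Nonemptiness.} I would begin with any small embedded disk \(D\) tangent to \(E^{uu}\), and consider the sequence of probability measures
\[
\mu_n \;=\; \frac{1}{n}\sum_{k=0}^{n-1} f^k_{\ast}\!\Bigl(\frac{\mathrm{Leb}_D}{\mathrm{Leb}_D(D)}\Bigr).
\]
Any weak-\(^\ast\) accumulation point \(\mu\) is \(f\)-invariant. The central point is that, because \(f\) is \(C^{1+}\), the classical bounded distortion estimate along strong unstable plaques gives a uniform upper and lower bound for the densities of \(f^k_{\ast}(\mathrm{Leb}_D)\) with respect to Lebesgue on the iterated plaques. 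Passing to the limit, the conditional measures of \(\mu\) on strong unstable leaves inherit bounded densities, so \(\mu\) is a Gibbs \(u\)-state.

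\emph{Compactness and convexity.} The set of \(f\)-invariant probabilities is already weak-\(^\ast\)-compact, so I only need closedness of the Gibbs \(u\)-property under weak-\(^\ast\) limits. I would use Ledrappier's characterization~\eqref{LS}: a measure \(\mu\) is a Gibbs \(u\)-state iff
\[
h_\mu(f,\mathscr{F}^{uu}(f)) \;=\; \int \log\bigl|\det Df|_{E^{uu}}\bigr|\,d\mu.
\]
The integrand is continuous since \(E^{uu}\) varies continuously, and the partial entropy \(\mu\mapsto h_\mu(f,\mathscr{F}^{uu}(f))\) is upper semicontinuous along the strong unstable foliation (this is where the \(C^{1+}\) regularity matters; see \cite[Lemma~3.2]{Yang2021}). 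The reverse inequality \(h_\mu(f,\mathscr{F}^{uu}(f))\le \int \log|\det Df|_{E^{uu}}|\,d\mu\) always holds, so taking a weak-\(^\ast\) limit of Gibbs \(u\)-states preserves the equality. Convexity is direct from the definition: the conditional measure of \(t\mu_1+(1-t)\mu_2\) on a strong unstable plaque \(P\) is the convex combination of the conditional measures of \(\mu_1\) and \(\mu_2\) on \(P\) with weights proportional to \(t\mu_1(\text{transversal})\) and \((1-t)\mu_2(\text{transversal})\), and a convex combination of two absolutely continuous measures is absolutely continuous.

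\emph{Ergodic components.} Let \(\mu=\int \mu_x\,d\mu(x)\) be the ergodic decomposition. Integrating the characterization~\eqref{LS} against this decomposition gives
\[
\int h_{\mu_x}(f,\mathscr{F}^{uu}(f))\,d\mu(x)
\;=\; \int\!\!\int \log\bigl|\det Df|_{E^{uu}}\bigr|\,d\mu_x\,d\mu(x),
\]
using affinity of both sides in the measure (for the entropy side, affinity of partial entropy along a fixed measurable partition \(u\)-subordinate to \(\mathscr{F}^{uu}\) is standard). Since for every ergodic component the Ruelle-type inequality \(h_{\mu_x}(f,\mathscr{F}^{uu}(f))\le \int \log|\det Df|_{E^{uu}}|\,d\mu_x\) holds, equality in the integral forces equality \(\mu\)-a.e., so \(\mu_x\) is itself a Gibbs \(u\)-state for \(\mu\)-almost every \(x\).

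The main obstacle is the upper semicontinuity of the partial entropy used in the compactness step; once that is in hand (and it is the content of the cited results in \cite{Yang2021} and \cite[Section~11.2]{CLM}), the remaining points are routine manipulations of the entropy formula and the ergodic decomposition.
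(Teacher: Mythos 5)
This result is cited in the paper from \cite[Section~11.2]{CLM} without proof, so there is no in-paper argument to compare against; the relevant reference is CLM itself. Your plan is sound but takes a genuinely different route for two of the four assertions. Nonemptiness and convexity coincide with the standard argument. For compactness and for the statement about ergodic components, CLM works entirely at the level of conditional measures and bounded distortion: every Gibbs $u$-state has plaque conditionals whose densities are uniformly log-bounded by a constant depending only on the distortion along $E^{uu}$, this property is manifestly closed under weak* limits, and the ergodic-component statement follows by disintegrating the conditionals along the ergodic decomposition. You instead route both through Ledrappier's characterization~\eqref{LS}, invoking the partial Margulis--Ruelle inequality $h_\nu(f,\mathscr{F}^{uu})\le\int\log|\det Df|_{E^{uu}}|\,d\nu$, the upper semicontinuity of $\mu\mapsto h_\mu(f,\mathscr{F}^{uu})$ from~\cite{Yang2021}, and the affinity of partial entropy in the measure. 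These ingredients are all correct, so your argument works, and this entropy-based formulation has become common in the recent literature. The trade-off is that you rely on substantially heavier machinery (in particular, the upper semicontinuity theorem) than the self-contained distortion estimate in CLM; and in the ergodic-component step you should state explicitly that a $u$-subordinate partition for $\mu$ is also $u$-subordinate for $\mu$-a.e.\ ergodic component, which is what lets you apply affinity of $\nu\mapsto h_\nu(f,\xi)$ for a fixed $\xi$ when integrating against the ergodic decomposition.
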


\begin{lem}\cite[Theorem~A,Theorem~B]{MiCao2021}\label{change}
Assume that \(f\) is \(C^{1+}\), and that \(E^{cu}\) is mostly expanding while \(E^{cs}\) is mostly contracting. Then there exists a \(C^1\) neighborhood of \(f\) such that, for every \(C^{1+}\) diffeomorphism \(g\) in this neighborhood, 
\begin{itemize}
\item The map \( g \) admits a skeleton, and the cardinality of the skeleton of \( g \) is less than or equal to the cardinality of \( S(g) \), where \( S(g) \) denotes the continuation of the skeleton of \( f \) under \( g \).  Moreover, when the finite set \( S(g) \) satisfies property~\eqref{feihengjie}, \( S(g) \) is a skeleton of \( g \).
 \item  The number of physical measures of \( g \) equals the cardinality of any skeleton with respect to \( g \). 
\item For any skeleton \( \{ r_1, \dots, r_l \} \) with respect to \( g \), the set of physical measures \( \{ \mu_1, \dots, \mu_l \} \) of \( g \) satisfies the relation
\[
\supp(\mu_i) = \overline{W^u(\text{Orb}(r_i, g), g)} \quad \text{for each } i \in \{ 1, 2, \dots, l \},
\]
where \( \overline{W^u(\text{Orb}(r_i, g), g)} \) denotes the closure of \( W^u(\text{Orb}(r_i, g), g) \).
\end{itemize}
\end{lem}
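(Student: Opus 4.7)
The plan is to prove all three items at once via the classical Pesin--Sinai machinery of Gibbs $u$-states, combined with the mostly-expanding / mostly-contracting hypothesis, which is a $C^1$-robust condition by \cite[Theorem~B]{Yang2021}. As a first step, I would use the results of \cite{MiCaoYang2017} together with Proposition~\ref{sat} to argue that there are finitely many ergodic Gibbs $u$-states $\mu_1,\dots,\mu_l$ of $g$, and each of them is a physical measure whose basin has positive Lebesgue measure. Conversely, the absolute continuity of the unstable disintegration is forced on every physical measure because $E^{cu}\oplus E^{uu}$ carries only positive Lyapunov exponents, so the two classes coincide. Lemma~\ref{daxiao} guarantees that the splitting $E^{cu}\oplus_\succ E^{cs}$ separates the positive and negative Lyapunov exponents of any ergodic $\mu_i$, so each $\mu_i$ is hyperbolic with stable index exactly $\dim E^{cs}$.

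Next, for each $\mu_i$ I would produce a hyperbolic periodic point $r_i$ of stable index $\dim E^{cs}$ lying in $\supp(\mu_i)$ by applying Katok's closing lemma to a Pesin block of positive $\mu_i$-measure. The identification $\supp(\mu_i)=\overline{W^u(\Orb(r_i,g),g)}$ then comes in two steps: the inclusion $\supseteq$ follows from $r_i\in\supp(\mu_i)$ together with $f$-invariance of the support and the fact that the support is saturated by strong-unstable plaques (since $\mu_i$ is a Gibbs $u$-state); the converse inclusion uses the equidistribution statement for strong-unstable disks in the mixed setting \cite{MiCaoYang2017}, which says every strong-unstable disk near $r_i$ equidistributes to $\mu_i$. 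Now the skeleton covering property (a) follows from the standard Pesin--Sinai disk argument: any $C^1$-disk $D$ transverse to $E^{cs}$, iterated forward, develops strong-unstable plaques of uniform size (because $E^{cu}$ is mostly expanding), and every weak-$*$ accumulation point of the normalized Lebesgue measures on $f^n(D)$ is a Gibbs $u$-state, hence puts positive mass on some $\mu_i$, forcing a transverse intersection of $D$ with $W^s(\Orb(r_i,g),g)$. This simultaneously delivers items~(2) and~(3), and gives a skeleton for $g$ itself.

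The main obstacle -- and the content of \cite[Theorems~A,~B]{MiCao2021} -- is the upper semi-continuity statement for perturbations $g$ of $f$. Hyperbolicity of each $r_i$ gives persistent continuations $r_i(g)$ with the same stable index, and continuous dependence of their stable/unstable manifolds in the $C^1$ topology lets one transport the covering property (a) from $f$ to $g$, so the cardinality of the continuation $S(g)$ is always at least the cardinality of a genuine skeleton of $g$. Property~\eqref{feihengjie} can, however, fail: distinct $r_i(g),r_j(g)$ may acquire heteroclinic intersections between their invariant manifolds, in which case $\mu_i$ and $\mu_j$ merge into a single physical measure of $g$; one then extracts a maximal subset $S'(g)\subseteq S(g)$ satisfying~\eqref{feihengjie} and verifies that $S'(g)$ still covers every $C^1$-disk transverse to $E^{cs}_g$. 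The hard part is the quantitative control behind this last step: one needs uniform Pesin blocks for the ergodic Gibbs $u$-states of all $C^{1+}$ diffeomorphisms in a $C^1$-neighborhood, together with weak-$*$ upper semi-continuity of the set of Gibbs $u$-states and uniform lower bounds on the sizes of the stable manifolds through the $r_i(g)$. This uniform Pesin-theoretic control, not the skeleton bookkeeping, is where essentially all the technical difficulty lies.
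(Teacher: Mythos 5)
This lemma is not proved in the paper: it is cited verbatim from \cite[Theorems~A and~B]{MiCao2021}, so there is no in-paper proof against which to compare your sketch. Your outline is, in effect, a reconstruction of the Mi--Cao argument. Viewed that way, the broad structure is right, but one step is incorrect as written. You assert that ``the absolute continuity of the unstable disintegration is forced on every physical measure because $E^{cu}\oplus E^{uu}$ carries only positive Lyapunov exponents, so the two classes coincide.'' Positivity of the Lyapunov exponents along the unstable bundle does \emph{not} force absolute continuity of the conditionals on strong-unstable leaves; that would amount to asserting that the Pesin/SRB entropy-formula characterization holds automatically for every physical measure, which is false. The route actually taken in \cite{MiCaoYang2017} (and, before that, \cite{BonattiViana2000}) is different: one shows that the basins of the finitely many ergodic Gibbs $u$-states together cover a full Lebesgue-measure subset of $M$, using absolute continuity of the stable holonomy and a Fubini-type covering argument, and then any physical measure, having a basin of positive Lebesgue measure, must intersect one of these basins and hence coincide with one of the $\mu_i$.

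The remaining ingredients of your sketch---Katok closing to produce hyperbolic periodic points of stable index $\dim E^{cs}$ in $\supp(\mu_i)$, the equidistribution argument giving $\supp(\mu_i)=\overline{W^u(\Orb(r_i,g),g)}$, the Pesin--Sinai disk argument for the skeleton covering property, and the extraction of a maximal subcollection $S'(g)\subseteq S(g)$ satisfying~\eqref{feihengjie} when heteroclinics appear under perturbation---are consistent with the cited proof, and you correctly identify the uniform Pesin-block control and upper semicontinuity of Gibbs $u$-states across a $C^1$-neighborhood as where the technical weight lies.
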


\section{Construction of Mixed–Kan–Type Skew Products}\label{three}

Fix \(\delta = \frac{1}{10000}\).  Choose a $C^\infty$-smooth function $\psi \colon \mathbb{R} \to \mathbb{R}$ such that:
\begin{itemize}
\item $\psi(x) = \psi(-x)$ for all $x \in \mathbb{R}$ (i.e., $\psi$ is symmetric about $x = 0$);
\item $\psi(x) = 1$ for $x \in \left[ 0, \frac{\delta}{2} \right]$, and $\psi(x) = 0$ for $x \in [\delta, +\infty)$;
\item $\psi(x)$ is strictly monotone on $\left( \frac{\delta}{2}, \delta \right)$.
\end{itemize}

Choose a  $C^\infty$-smooth diffeomorphism \(\phi : \mathbb{R} \to \mathbb{R}\) such that 
\begin{itemize}
\item \(\phi\) is \(2\)-periodic, i.e. \(\phi(x) = \phi(x+2)\) for all \(x \in \mathbb{R}\);
\item on the interval \([0,2]\), \(\phi\) has exactly three fixed points, located at \(0, 1\), and \(2\), satisfying
\begin{equation}\label{contra}
    \phi'(0)=\frac{1}{2} \quad \text{and} \quad \phi'(1) = \frac{3}{2},
   \end{equation}
   and moreover,
\begin{equation}\label{well-defined}
   1 + \min_{x, y \in \mathbb{R}} \bigl\{ (\phi'(x) - 1) \psi(y) \bigr\} > 0.
   \end{equation}
\end{itemize}

Let \( \mathbb{R}^{uu} = \mathbb{R} \) and \( \mathbb{R}^{ss} = \mathbb{R} \). Define
\[
I_\delta : \mathbb{R}^{uu} \times \mathbb{R}^{ss} \times \mathbb{R} \to \mathbb{R}^{uu} \times \mathbb{R}^{ss} \times \mathbb{R}
\]
 by 
\[
I_{\delta}(a,b,c)
:= \Bigl(a,, b,, Q(a,b,c)\Bigr), \quad
Q(a,b,c) = \psi\bigl(\sqrt{a^2+b^2}\bigr)\phi(c) + \bigl(1-\psi(\sqrt{a^2+b^2})\bigr) c.
\]
Let \(B_\gamma(0,0)\) denote the open ball in \(\mathbb{R}^2\) of radius \(\gamma\) centered at \((0,0)\).
We then obtain the following:
\begin{itemize}
\item It follows from condition~\eqref{well-defined} and the computation
\[
\frac{\partial Q}{\partial c}(a,b,c)
=1+(\phi'(c)-1)\psi\bigl(\sqrt{a^{2}+b^{2}}\bigr)>0
\]
that \(I_\delta\) is a \(C^\infty\)-diffeomorphism.
\item  For \((a,b) \in B_{\frac{\delta}{2}}(0,0)\), we have
$I_\delta(a,b,c) = (a,b,\phi(c))$. 
\item For \((a,b) \notin B_\delta(0,0)\), we have \(I_\delta(a,b,c) = (a,b,c)\), i.e., \(I_\delta\) coincides with the identity map.
\item The function \( Q(a, b, c) \), where \( a, b \in \mathbb{R} \), satisfies the following  conditions:
\[
Q(a, b, 0) = 0 \quad \text{and} \quad Q(a, b, 1) = 1.
\]
\item We have
\begin{equation}\label{symmetry}
\iint_{[-\delta,\delta]\times[-\delta,\delta]} (\log\frac{\partial Q}{\partial c}(a,b,0)+\log\frac{\partial Q}{\partial c}(a,b,1) ) da  db<0,
\end{equation}
which holds by condition~\eqref{contra}.
\end{itemize}

Let \(A\) be the toral hyperbolic automorphism induced by
\[
\begin{pmatrix} 2 & 1 \\ 1 & 1 \end{pmatrix}.
\]

Fix a sufficiently small \(\beta > 0\) and a sufficiently large integer \(n_0\) such that the following conditions are satisfied:
\begin{itemize}
\item The map \(A^{n_0}\) admits four fixed points \(p_1, p_2, q_1, q_2\), whose pairwise distances are all greater than \(10\delta\).
\item 
\begin{equation}\label{xiaodeg}
\sigma_0 \ge10000\cdot\max \Biggl\{ \max\Bigl\{ \frac{\partial Q}{\partial c}(a,b,c) : a,b,c \in \mathbb{R} \Bigr\}, \Bigl( \min\Bigl\{ \frac{\partial Q}{\partial c}(a,b,c) : a,b,c \in \mathbb{R} \Bigr\} \Bigr)^{-1} \Biggr\},
\end{equation}
 where \(\sigma_0\) is the largest eigenvalue of \(A^{n_0}\).
\item 
\[
\beta \le \frac{\delta}{2} \cdot \frac{1}{10000}, \quad 2\beta^2 \log \left( \frac{3}{4} \right) + (1 - 2\beta^2) \log \sigma_0 > 0.
\]
\item 
\[
\frac{1}{2} - \frac{1}{10000} < \phi'(x) < \frac{1}{2} + \frac{1}{10000} \quad \text{for all} \quad x \in [-\beta, \beta].
\]
\end{itemize}

Let \( \mathbb{R}^{u} = \mathbb{R} \) and \( \mathbb{R}^{s} = \mathbb{R} \).  Fix a sufficiently large constant \(k\) such that
\[
\frac{3\delta/2}{k} \le \frac{\beta}{10000}.
\]
Define
\[
J_\beta : \mathbb{R}^{uu} \times \mathbb{R}^{ss} \times \mathbb{R} \times \mathbb{R}^{u} \times \mathbb{R}^{s} \to \mathbb{R}^{uu} \times \mathbb{R}^{ss} \times \mathbb{R}\times\mathbb{R}^{u} \times \mathbb{R}^{s} 
\]
 by
\[
J_{\beta}(a,b,c,d,e)
:= (a,b, Q(a,b,c),P(a,b,c,d,e),e)
\]
where 
\[
\begin{aligned}
P(a,b,c,d,e)
&:= \bigl(1 - \psi(k d) \cdot \psi(k \sqrt{a^2 + b^2 + c^2 + e^2})\bigr)\cdot d+ \psi(k d) \cdot \psi(k \sqrt{a^2 + b^2 + c^2 + e^2}) \cdot \frac{3d}{4\sigma_0}\\
  &= d + \psi(k d) \cdot \psi(k \sqrt{a^2 + b^2 + c^2 + e^2}) \cdot \Bigl(\frac{3d}{4\sigma_0} - d\Bigr).
  \end{aligned}
 \]

 A simple calculation gives
\begin{equation}\label{welldefined2}
\sigma_0\frac{\partial P}{\partial d}(a,b,c,d,e)=\sigma_0 + \Bigl(\frac{3}{4} - \sigma_0\Bigr)\psi(k \sqrt{a^2 + b^2 + c^2 + e^2})\bigl(\psi(k d) + \psi'(k d)kd \bigr) \ge \frac{3}{4} .
\end{equation}
By combining inequalities~\eqref{welldefined2} and~\eqref{xiaodeg}, it follows that
\begin{equation}\label{welldefined3}
\min\left\{ \sigma_0^2 \frac{\partial P}{\partial d}(a,b,c,d,e):a,b,c,d,e\in   \mathbb{R} \right\} \ge 7500.
\end{equation}

From this construction, we see that
\begin{itemize}
\item It follows from condition~\eqref{welldefined2}
that \(J_{\beta}\) is a \(C^\infty\)-diffeomorphism.
\item  For  $(a,b,c)\in \Bigl[-\frac{\beta}{2},\frac{\beta}{2}\Bigr]^3$, we have
\begin{equation}\label{zhongjian1}
J_{\beta}(a,b,c,d,e) = (a,b,\phi(c),P(a,b,c,d,e),e) \quad\text{and}\quad \sigma_0\frac{\partial P}{\partial d}(a,b,c,d,e)\ge\frac{3}{4}>
   \frac{1}{2}+\frac{1}{10000}
   >\phi'(c).
\end{equation}
(By calculation, it  is easy to see that the support of \( P(a,b,c,d,e) \) is at most in the region
\[
\{(a,b,c,d,e) : |d| \le \frac{2\beta}{30000}, \sqrt{a^2 + b^2 + c^2 + e^2} \le \frac{2\beta}{30000}\}).
\]
\item For \((a,b,c) \notin  \Bigl[-\frac{\beta}{2},\frac{\beta}{2}\Bigr]^3$, we have 
\begin{equation}\label{zhongjian2}
J_{\beta}(a,b,c,d,e) = (a,b,Q(a,b,c),d,e),
\qquad \text{together with relation~\eqref{xiaodeg}.}
\end{equation}
\end{itemize}
\begin{equation}
	\includegraphics[width=1.1\textwidth]{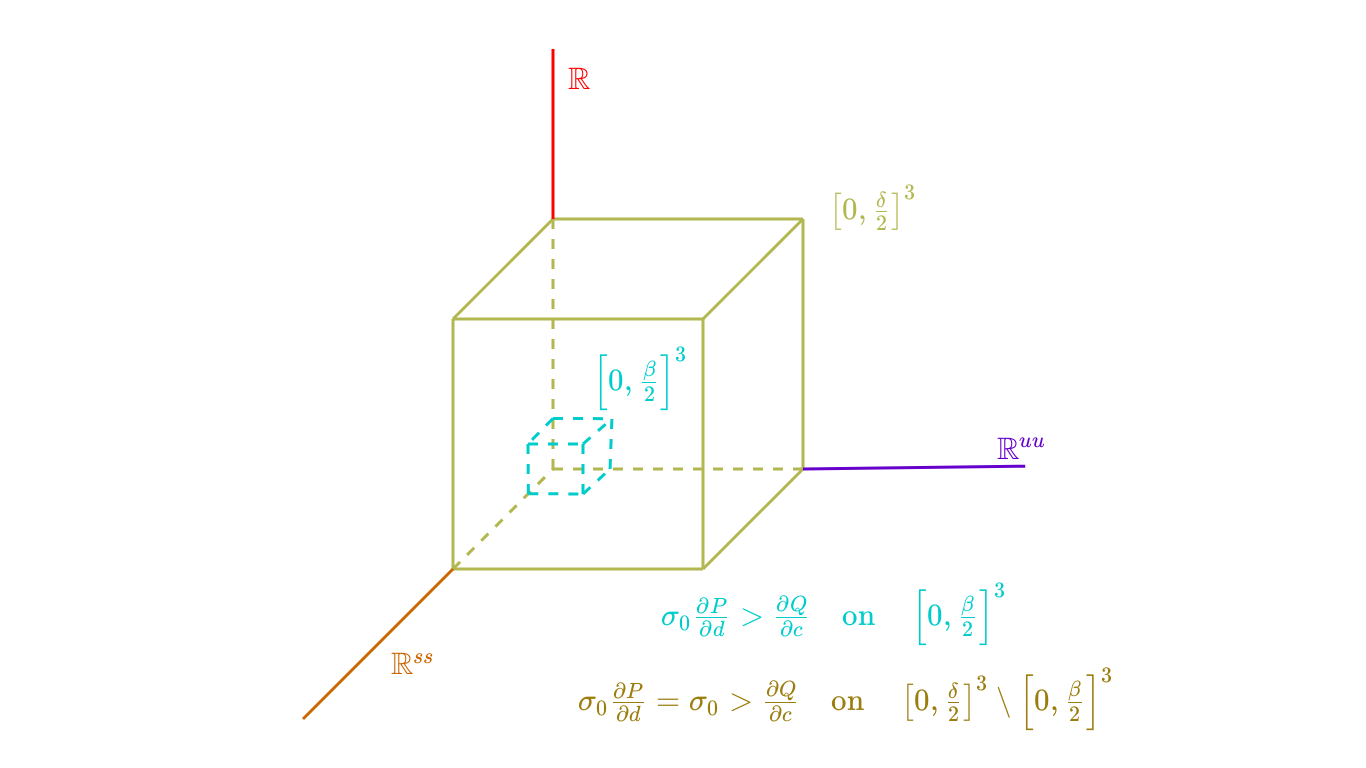}
\end{equation}

Fix a sufficiently large integer \(n_1\) such that the following holds: the largest eigenvalue \(\sigma_1\) of \(A^{n_1}\) satisfies
\begin{equation}\label{big}
\sigma_1 \gg \max\Biggl\{ \sigma_0 \frac{\partial P}{\partial a}, \sigma_0 \frac{\partial P}{\partial b}, \sigma_0 \frac{\partial P}{\partial c}, \sigma_0 \frac{\partial P}{\partial d}, \sigma_0 \frac{\partial P}{\partial e}, \frac{\partial Q}{\partial a}, \frac{\partial Q}{\partial b}, \frac{\partial Q}{\partial c} \Biggr\}.
\end{equation}
(The symbol \( \gg \) indicates that the quantity on the left is "much greater than" the quantity on the right, as it corresponds to \( \bar{\lambda} \) in  Fact~\ref{fact2}.)

Denote by \(\mathcal{F}^{uu}\) and \(\mathcal{F}^{ss}\) the unstable and stable foliations associated with the map \(A^{n_1}\) on \(\mathbb{T}^2\), and by \(\mathcal{F}^{u}\) and \(\mathcal{F}^{s}\) the unstable and stable foliations associated with the map \(A^{n_0}\) on \(\mathbb{T}^2\). Lifting these foliations to \(\mathbb{R}^2\), we denote them by
\(\hat{\mathcal{F}}^{uu}, \hat{\mathcal{F}}^{ss}, \hat{\mathcal{F}}^{u},\) and \(\hat{\mathcal{F}}^{s}\), respectively. 
Moreover, these foliations are tangent everywhere to the subbundles \( E^{uu}, E^{ss}, E^u \), and \( E^s \), respectively. A point \(x\) on the torus $\T^2$, when lifted to \(\mathbb{R}^2\), is denoted by \(\hat{x}\).

The map \( \pi_2 : \mathbb{R} \to [-1, 1) \) is defined by \( \pi_2(c) = \bar{c} \), where \( \bar{c} \in [-1, 1) \) is the unique value such that there exists an integer \( l \in \mathbb{Z} \) satisfying \( \bar{c} - c = 2l \).

Next, we identify
\[
\hat{\mathcal{F}}^*(\hat{p}_i), \ \hat{\mathcal{F}}^*(\hat{q}_i) \quad \text{with} \quad \mathbb{R}^*, \quad \text{for } i=1,2 \text{ and } * \in \{uu, ss, u, s\}.
\]
Let \( \mathbb{S} = \mathbb{R}/2\mathbb{Z} \).
Now, we define a map \( \widetilde{ID} \) on \( \mathbb{T}^2 \times \mathbb{S} \times \mathbb{T}^2 \), such that its lift, \( \widehat{ID} \), satisfies the following properties:
\begin{itemize}
\item When 
\[
(a,b,c,d,e)\in \Lambda(\hat{p}_1)
= \hat{\mathcal{F}}^{uu}_{5\delta}(\hat{p}_1) \times 
  \hat{\mathcal{F}}^{ss}_{5\delta}(\hat{p}_1) \times 
  \mathbb{R} \times 
  \hat{\mathcal{F}}^{u}_{5\delta}(\hat{p}_1) \times 
  \hat{\mathcal{F}}^{s}_{5\delta}(\hat{p}_1),
\]
we define
\[
\widehat{ID}(a,b,c,d,e) = \bigl(a, b, Q(a,b,c), P(a,b,\pi_2(c),d,e), e \bigr),
\]
where \((\hat{p}_1,0,\hat{p}_1)\) is regarded as the origin \((0,0,0,0,0)\).
\item When 
\[
(a,b,c,d,e) \in \Lambda(\hat{p}_2)
= \hat{\mathcal{F}}^{uu}_{5\delta}(\hat{p}_2) \times 
  \hat{\mathcal{F}}^{ss}_{5\delta}(\hat{p}_2) \times 
  \mathbb{R} \times 
  \hat{\mathcal{F}}^{u}_{5\delta}(\hat{p}_2) \times 
  \hat{\mathcal{F}}^{s}_{5\delta}(\hat{p}_2),
\]
we define
\[
\widehat{ID}(a,b,c,d,e) = \bigl(a, b, Q(a,b,c-1), P(a,b,\pi_2(c-1),d,e), e \bigr),
\]
where \((\hat{p}_2, 1, \hat{p}_2)\) is regarded as the point \((0,0,1,0,0)\).
\item When 
\[
(a,b,c,d,e) \in \Gamma(\hat{q}_1)
= \hat{\mathcal{F}}^{uu}_{5\delta}(\hat{q}_1) \times 
  \hat{\mathcal{F}}^{ss}_{5\delta}(\hat{q}_1) \times 
  \mathbb{R} \times 
  \hat{\mathcal{F}}^{u}_{5\delta}(\hat{q}_1) \times 
  \hat{\mathcal{F}}^{s}_{5\delta}(\hat{q}_1),
\]
we define
\[
\widehat{ID}(a,b,c,d,e) = \bigl(a, b, Q(a,b,c), d, e \bigr),
\]
where \((\hat{q}_1, 0, \hat{q}_1)\) is regarded as the origin \((0,0,0,0,0)\).
\item When 
\[
(a,b,c,d,e) \in 
\Gamma(\hat{q}_2)
= \hat{\mathcal{F}}^{uu}_{5\delta}(\hat{q}_2) \times 
  \hat{\mathcal{F}}^{ss}_{5\delta}(\hat{q}_2) \times 
  \mathbb{R} \times 
  \hat{\mathcal{F}}^{u}_{5\delta}(\hat{q}_2) \times 
  \hat{\mathcal{F}}^{s}_{5\delta}(\hat{q}_2),
\]
we define
\[
\widehat{ID}(a,b,c,d,e) = \bigl(a, b, Q(a,b,c-1), d, e \bigr),
\]
where \((\hat{q}_2, 1, \hat{q}_2)\) is regarded as the point \((0,0,1,0,0)\).
\item Outside the neighborhoods described above, we set \(\widehat{ID} (a,b,c,d,e) = \bigl( a,  b, c,  d,  e \bigr)\).
\end{itemize}
Owing to the local nature of the definition of \(P\) and the periodicity of \(Q\), together with the gluing lemma \cite{Lee},  \( \widetilde{ID} \) is well-defined on the entire \(\mathbb{T}^2\times\mathbb{S} \times \mathbb{T}^2 \).  At this point, we define the required smooth diffeomorphism \( f \) by
\[
f = (A^{n_1} \times \mathrm{id} \times A^{n_0}) \circ \widetilde{ID}.
\]
We denote the lift of $f$ by $\hat{f}$.
From the construction we obtain the following
\begin{itemize}
\item The map \( f \) can be written in the form:
\[
f(x,y,z) = \big(f_{\mathbb{T}^2}(x), f_{\mathbb{T}^2 \times \mathbb{S}}(x,y),f_{\mathbb{T}^2 \times \mathbb{S} \times  \mathbb{T}^2}(x,y,z)\big),
\]
where \((x,y,z) \in \mathbb{T}^2 \times \mathbb{S} \times  \mathbb{T}^2\).
\item  \[
f(\mathbb{T}^2 \times \{0\} \times \mathbb{T}^2)
= \mathbb{T}^2 \times \{0\} \times \mathbb{T}^2,
\qquad
f(\mathbb{T}^2 \times \{1\} \times \mathbb{T}^2)
= \mathbb{T}^2 \times \{1\} \times \mathbb{T}^2.
\]
\end{itemize}

We define the following projection maps, which will be used throughout the paper.
First, define
\[
\pi^3_1 : \mathbb{T}^2 \times \mathbb{S} \times \mathbb{T}^2 \longrightarrow \mathbb{T}^2
\]
by
\[
\pi^3_1(x,y,z) = x,
\quad
\text{for all } (x,y,z) \in \mathbb{T}^2 \times \mathbb{S} \times \mathbb{T}^2 .
\]
Next, define
\[
\pi^3_2 : \mathbb{T}^2 \times \mathbb{S} \times \mathbb{T}^2
\longrightarrow
\mathbb{T}^2 \times \mathbb{S}
\]
by
\[
\pi^3_2(x,y,z) = (x,y),
\quad
\text{for all } (x,y,z) \in \mathbb{T}^2 \times \mathbb{S}.
\]
Finally, define
\[
\pi^2_1 : \mathbb{T}^2 \times \mathbb{S} \longrightarrow \mathbb{T}^2
\]
by
\[
\pi^2_1(x,y) = x,
\quad
\text{for all } (x,y) \in \mathbb{T}^2 \times \mathbb{S}.
\]
Define the map
\[
g : \mathbb{T}^2 \times \mathbb{S} \longrightarrow \mathbb{T}^2 \times \mathbb{S}
\]
by
\[
g(x,y) = \big(f_{\mathbb{T}^2}(x), f_{\mathbb{T}^2 \times \mathbb{S}}(x,y)\big), \quad \text{for } (x,y) \in \mathbb{T}^2 \times \mathbb{S}.
\]

Let
\[
\Pi:\widetilde{M}\longrightarrow \widetilde{N}
\]
be a measurable map between measurable spaces \((\widetilde{X},\mathcal{M})\) and \((\widetilde{Y},\mathcal{N})\).
If \(\widetilde{\mu}\) is a measure on \((\widetilde{X},\mathcal{M})\), the \textbf{pushforward measure} \(\Pi_*(\widetilde{\mu})\) on \((\widetilde{Y},\mathcal{N})\) is defined by
\[
\Pi_*(\widetilde{\mu})(B)=\widetilde{\mu}(\Pi^{-1}(B)),
\qquad \text{for every } B\in\mathcal{N}.
\]
In the application of these definitions, \(\Pi\) will correspond to continuous maps, and \(\mathcal{M}\) and \(\mathcal{N}\) will correspond to the Borel \(\sigma\)-algebras.

We emphasize that all the notations defined in this section~\ref{three} will be used in all subsequent chapters. These notations include:  \( \delta \), \( \beta \), \( A^{n_0} \), \( A^{n_1} \), \( \sigma_0 \), \( \sigma_1 \), \( Q \), \( P \), \( \phi \), \( p_i \), \( q_i \), where \( i=1,2 \), \( \mathbb{S} \), \( f \), \( g \), \( E^* \), \( \mathcal{F}^* \), \( \hat{\mathcal{F}}^* \), \(  \mathbb{R}^* \), where \( * \in \{ uu, ss, u, s \} \), \( \pi^3_2 \), \( \pi^2_1 \), and \( \hat{x} \).

\section{Mostly Contracting Behavior of the Subsystem}

We first show that \(g\) is partially hyperbolic.
It is straightforward to see that 
\[
Dg=\renewcommand{\arraystretch}{2.4}
\begin{pmatrix}
\sigma_1 & 0 & 0 \\
\frac{\partial Q}{\partial a}  & \frac{\partial Q}{\partial c}   & \frac{\partial Q}{\partial b} \\
 0 & 0 & \frac{1}{\sigma_1}
\end{pmatrix}.
\]
It follows from our setup~\eqref{xiaodeg} and~\eqref{big}  that there exist positive constants \(\rho_1\) and \(\rho_2\) such that the cones
\[
\mathscr{C}_{\rho_1}(E^{uu}, T\mathbb{S})
\quad\text{and}\quad
\mathscr{C}_{\rho_2}(E^{ss}, T\mathbb{S})
\]
are forward-invariant and backward-invariant under \(g\), respectively. Combining Lemma~\ref{fact1} and Lemma~\ref{cone}, we conclude that the map \(g\) admits a unique partially hyperbolic splitting
\[
T(\mathbb{T}^2 \times \mathbb{S})=\widetilde{E^{uu}_g} \oplus_{\succ} T\mathbb{S} \oplus_{\succ} \widetilde{E^{ss}_g}.
\]


For the sake of exposition, we may assume that the partial lifts of
\(\mathbb{T}^2 \times \mathbb{S}\) and \(\mathbb{T}^2 \times \mathbb{S} \times \mathbb{T}^2\)
are given by \(\mathbb{T}^2 \times \mathbb{R}\) and
\(\mathbb{T}^2 \times \mathbb{R} \times \mathbb{T}^2\), respectively.  The corresponding maps on the partial lifts will still be denoted by
\( g \) and \( f \), respectively.

\begin{lem}\label{mostcon}
We have
\begin{itemize}
\item Any disk \(D^{uu}\) that is transverse to \(T\mathbb{S} \oplus_{\succ} \widetilde{E^{ss}_g}\) must intersect transversely either  \(W^{s}\bigl((q_1,0), g\bigr)\) or
\(W^{s}\bigl((q_2,1), g\bigr)\).
\item The invariant subbundle \(T\mathbb{S}\) is mostly contracting with respect to $g$.  
\item The map \(g\) has exactly two Gibbs \(u\)-states, which are given by the Lebesgue measures supported on \(\mathbb{T}^2 \times \{i\}\), for \(i\in\{0,1\}\). 
\end{itemize}
\end{lem}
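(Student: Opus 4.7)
My plan is to prove Item 1 first, then derive Items 2 and 3 together by classifying the ergodic Gibbs $u$-states of $g$. Two structural facts drive everything: outside small balls around $p_1, p_2, q_1, q_2$ the fiber map is the identity on $\mathbb{S}$, while on those balls it is a North--South circle diffeomorphism with fixed-point set $\{0,1\}$---namely $\phi$ (with $0$ attractor, $1$ repellor) near $p_1, q_1$ and its shifted analogue (with $1$ attractor, $0$ repellor) near $p_2, q_2$; and the tori $\mathbb{T}^2 \times \{0\}$ and $\mathbb{T}^2 \times \{1\}$ are $g$-invariant, with $g$ restricting to $A^{n_1} \times \mathrm{id}$ on each. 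Inequality~\eqref{symmetry} provides the analytical estimate $\log(\tfrac{1}{2}) + \log(\tfrac{3}{2}) < 0$ needed later.

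For Item 1, take a disk $D^{uu}$ transverse to $T\mathbb{S} \oplus \widetilde{E^{ss}_g}$ and iterate. The cone-field estimates from Fact~\ref{fact0}, Lemma~\ref{fact1}, and Lemma~\ref{cone} force $g^n(D^{uu})$ to become $C^1$-close to a strong unstable leaf of $g$, with $\pi^2_1$-projection a long unstable arc of $A^{n_1}$. Density of unstable leaves in $\mathbb{T}^2$ produces some $n$ and a subarc $\gamma \subset g^n(D^{uu})$ projecting into $B_{\delta/2}(q_1)$ with $y$-coordinate nearly constant. Repeated returns of $\gamma$ to $B_{\delta/2}(q_1)$, which occur with positive Birkhoff frequency on the Anosov base, apply $\phi$ to the $y$-value; since $\phi$ is North--South with global attractor $0$ on $\mathbb{S} \setminus \{1\}$, on some subsubdisk $y$ enters the local stable set of $0$ under $\phi$, producing the desired transverse intersection with $W^s((q_1, 0), g)$. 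The symmetric argument near $q_2$ using the shifted fiber map yields instead transverse intersection with $W^s((q_2, 1), g)$.

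For Items 2 and 3, set $\mathrm{Leb}_i := \mathrm{Leb}_{\mathbb{T}^2} \times \delta_{y=i}$ for $i = 0, 1$. These are $g$-invariant, and by Lemma~\ref{zhibiao} the strong unstable bundle of $g$ along $\mathbb{T}^2 \times \{i\}$ coincides with the unstable bundle of $A^{n_1}$, so $\mathrm{Leb}_i$ has absolutely continuous $\widetilde{E^{uu}_g}$-conditionals; each is a Gibbs $u$-state. Birkhoff on the Anosov base shows $\lambda^c(\mathrm{Leb}_i)$ reduces, up to a positive multiplicative factor that sums the contributions from $p_1, p_2, q_1, q_2$, to $\iint_{[-\delta,\delta]^2}(\log \partial_c Q(a,b,0) + \log \partial_c Q(a,b,1))\, da\, db < 0$ by~\eqref{symmetry}. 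Conversely, any ergodic Gibbs $u$-state $\mu$ satisfies $(\pi^2_1)_* \mu = \mathrm{Leb}_{\mathbb{T}^2}$ by absolute continuity of its unstable conditionals plus uniqueness of the Anosov SRB. Disintegrating $\mu = \int \mu_x\, d\mathrm{Leb}(x)$ and iterating $g_*\mu_x = \mu_{A^{n_1}x}$ backward along a typical orbit, the cocycle analysis described in the final paragraph forces $\mu_x(\{0,1\}) = 1$ a.e. Writing $\mu_x = p(x)\delta_0 + (1 - p(x))\delta_1$, invariance $p \circ A^{n_1} = p$ plus ergodicity of $A^{n_1}$ makes $p$ constant, and ergodicity of $\mu$ forces $p \in \{0, 1\}$; hence $\mu \in \{\mathrm{Leb}_0, \mathrm{Leb}_1\}$. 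Via Proposition~\ref{sat}, every Gibbs $u$-state is a convex combination of these two, both with negative center Lyapunov exponent, so $T\mathbb{S}$ is mostly contracting.

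The main obstacle is the concentration statement $\mu_x(\{0,1\}) = 1$ a.e. Along a Leb-typical backward orbit the Birkhoff theorem guarantees positive densities of both $\phi$-visits (when $A^{-n_1 k}(x) \in B_\delta(p_1) \cup B_\delta(q_1)$) and shifted-$\phi$-visits (when $A^{-n_1 k}(x) \in B_\delta(p_2) \cup B_\delta(q_2)$), so the random composition defining $\mu_x$ contains positive densities of both maps. Because they share the fixed set $\{0,1\}$---each contracting a uniform neighborhood of its attractor by ratio close to $\tfrac{1}{2}$, while the opposite-fixed-point expansion $\tfrac{3}{2}$ is too weak to compensate, as quantified by~\eqref{symmetry}---long compositions push every compact subset of $\mathbb{S} \setminus \{0,1\}$ into arbitrarily small neighborhoods of $\{0,1\}$. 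A Borel--Cantelli/large-deviation argument along the equidistributed Anosov orbit upgrades this uniform-attraction statement to the measure-theoretic conclusion $\mu_x(\{0,1\}) = 1$, completing the classification.
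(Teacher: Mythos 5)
Your plan has the right overall architecture and correctly identifies the two structural inputs (the product form of $g$ and the integral inequality~\eqref{symmetry}), but the treatment of Items~2--3 has a genuine gap. Some comments.

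\textbf{Item 1.} Your argument is longer than necessary. Once one notes that $g$ is a product on $B_{\delta/2}(q_1)\times(-1,1)$ and that the fiber map fixes $\{0,1\}\subset\mathbb{S}$ pointwise over \emph{all} of $\mathbb{T}^2$ (so the open arc $(-1,1)$ is invariant), one gets the closed-form identities
\[
W^s\bigl((q_1,0),g\bigr)=\mathcal{F}^{ss}(q_1)\times(-1,1),\qquad
W^s\bigl((q_2,1),g\bigr)=\mathcal{F}^{ss}(q_2)\times(0,2),
\]
which jointly sweep out every value of $y\in\mathbb{S}$ over dense $\mathcal{F}^{ss}$-leaves. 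Iterating $D^{uu}$ forward until it is long in the $uu$-direction then gives a transverse intersection with one of these two sets. Your ``repeated returns'' step --- waiting for $y$ to contract under $\phi$ toward $0$ --- is superfluous: transverse intersection with $W^s((q_1,0),g)$ only requires $y\in(-1,1)$, not $y$ small. This is not an error, but it obscures the simpler reason.

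\textbf{Items 2--3.} Here you depart from the paper. The paper verifies that $\mathrm{Leb}$ on $\mathbb{T}^2\times\{i\}$ has negative $T\mathbb{S}$-exponent using~\eqref{symmetry} and then invokes \cite[Prop.~3.3]{DolgopyatVianaYang2016} to conclude, in one stroke, both that $T\mathbb{S}$ is mostly contracting and that these two are the only Gibbs $u$-states. Your attempt at a self-contained classification via the fiber disintegration $\mu=\int\mu_x\,d\mathrm{Leb}(x)$ is a legitimate alternative route, but the decisive step has a real gap. The claim ``long compositions push every compact subset of $\mathbb{S}\setminus\{0,1\}$ into arbitrarily small neighborhoods of $\{0,1\}$'' is false as a topological statement: already for the single composition $\widetilde{\phi}\circ\phi$ (the shifted map near $q_2$ composed with $\phi$), both endpoints $0$ and $1$ have derivative $\frac{3}{2}\cdot\frac{1}{2}=\frac{3}{4}<1$, so by an intermediate-value count there is a repelling fixed point $c\in(0,1)$ that does \emph{not} escape to the boundary. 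Analogous ``exceptional'' fiber points persist along arbitrary finite compositions, so the uniform-attraction picture you want is not available, and the subsequent ``Borel--Cantelli/large-deviation'' upgrade is not a routine step --- it is precisely the technical content of the classification. There is also a circularity: controlling the fiberwise behaviour of $\mu_x$ seems to require the center exponent of $\mu$ to be negative, which is what you are trying to prove. Closing this properly needs something substantive --- for instance an invariance-principle argument ruling out nonnegative exponent (if the exponent were $\geq 0$, the $\mu_x$ would be $su$-holonomy invariant, hence constant in $x$, hence simultaneously $\phi$- and $\widetilde\phi$-stationary, forcing $\mu_x$ to be a combination of $\delta_0,\delta_1$ and then a contradiction with~\eqref{symmetry}), followed by an atomicity theorem for negative-exponent disintegrations. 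Either develop this carefully as a standalone lemma or cite the relevant result, as the paper does.
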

\begin{proof}
Since \(g\) acts as a product map on
\(\mathcal{F}^{ss}_{\frac{\delta}{2}}(q_1) \times (-1,1)\), it follows that
\[
\mathcal{F}^{ss}_{\frac{\delta}{2}}(q_1) \times (-1,1) \subset W^s\bigl((q_1,0), g\bigr).
\]
Consequently, the stable manifold of \((q_1,0)\) is given by
\[
W^s\bigl((q_1,0), g\bigr)
= \bigcup_{j \in \mathbb{Z}} g^j \bigl( \mathcal{F}^{ss}(q_1) \times (-1,1) \bigr)
= \mathcal{F}^{ss}(q_1) \times (-1,1).
\]
Similarly, for \((q_2,1)\), we have 
\[
W^s\bigl((q_2,1), g\bigr) = \mathcal{F}^{ss}(q_2) \times (0,2).
\]
Thus, the first item follows immediately.

Notice that for each \(i \in \{0,1\}\), the Lebesgue measure on \(\mathbb{T}^2 \times \{i\}\) is the unique ergodic Gibbs \(u\)-state supported on \(\mathbb{T}^2 \times \{i\}\). Moreover, \(\mathbb{T}^2 \times \{i\}\) is a \(u\)-saturated, \(g\)-invariant compact subset of \(\mathbb{T}^2 \times \mathbb{S}\).  Consequently, by \cite[Proposition~3.3]{DolgopyatVianaYang2016}, it suffices to show that the Lebesgue measure on \(\mathbb{T}^2 \times \{i\}\) has only negative Lyapunov exponents along \(T\mathbb{S}\). 

The following figure illustrates our construction.
\begin{equation}
	\includegraphics[width=1.1\textwidth]{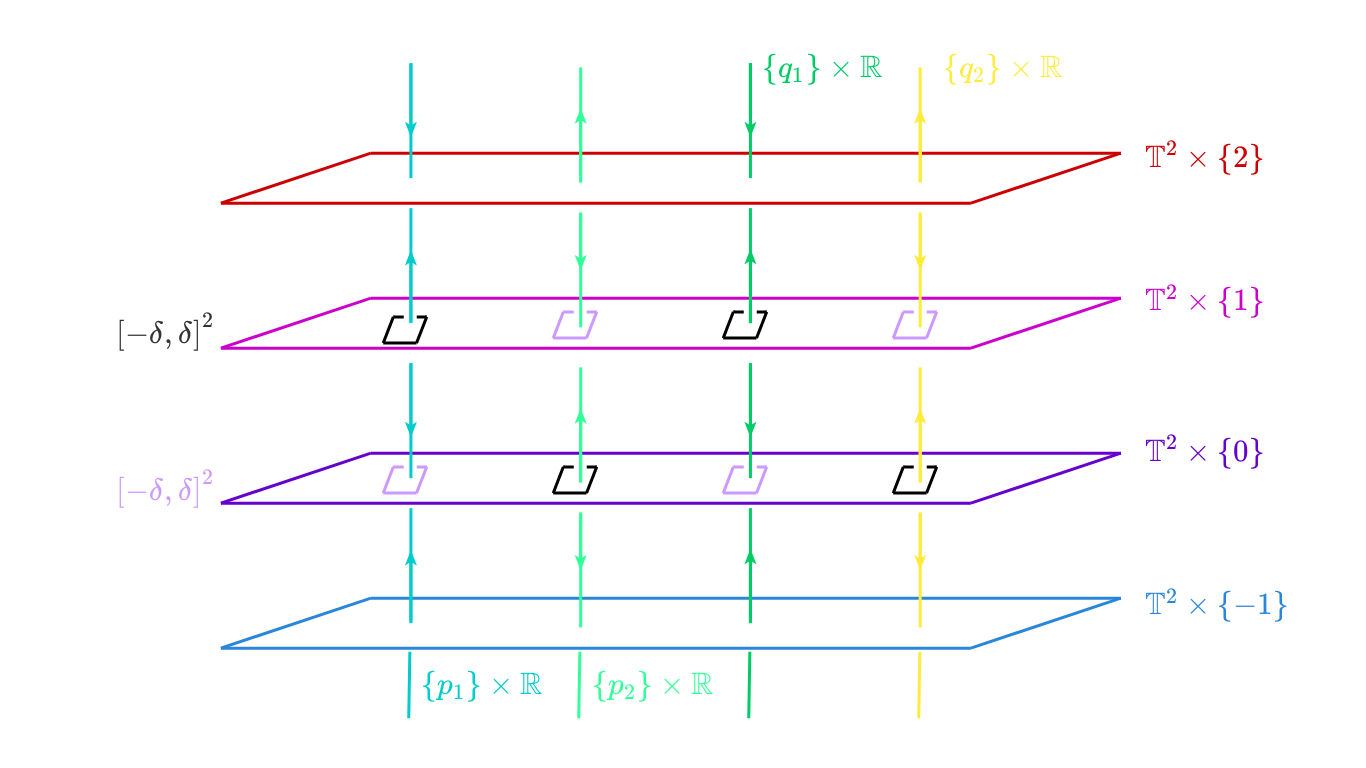}
\end{equation}
Since \((p_1,0)\) and \((p_2,1)\) agree locally with \((q_1,0)\) and \((q_2,1)\), respectively, and \(\det (Dg|_{T\mathbb{S}}) = \frac{\partial Q}{\partial c}\), it is therefore enough to prove that

\begin{equation}\label{symmetry2}
\iint_{[-\delta,\delta]^2_{(\hat{q}_1,0)}} \log \frac{\partial Q}{\partial c}(a,b,0) da db+
\iint_{[-\delta,\delta]^2_{(\hat{q}_2,0)}} \log \frac{\partial Q}{\partial c}(a,b,0) da db < 0, 
\end{equation}
where 
\[
[-\delta,\delta]^2_{(\hat{q}_1,0)} = \hat{\mathcal{F}}^{uu}_\delta(\hat{q}_1) \times \hat{\mathcal{F}}^{ss}_\delta(\hat{q}_1) \quad \text{and} \quad
[-\delta,\delta]^2_{(\hat{q}_2,0)} = \hat{\mathcal{F}}^{uu}_\delta(\hat{q}_2) \times \hat{\mathcal{F}}^{ss}_\delta(\hat{q}_2).
\]
By symmetry, we have
\[
\iint_{[-\delta,\delta]^2_{(\hat{q}_2,0)}} \log \frac{\partial Q}{\partial c}(a,b,0) da db
= \iint_{[-\delta,\delta]^2_{(\hat{q}_1,1)}} \log \frac{\partial Q}{\partial c}(a,b,1) da db,
\]
where
\[
[-\delta,\delta]^2_{(\hat{q}_1,1)} = \hat{\mathcal{F}}^{uu}_\delta(\hat{q}_1) \times \hat{\mathcal{F}}^{ss}_\delta(\hat{q}_1).
\]
It follows from relation~\eqref{symmetry} that condition~\eqref{symmetry2} is satisfied.
\end{proof}


\section{The Partially Hyperbolic Splitting of \(f\) and the Skeleton}\label{five}

We now consider the action of
\[
Df : E^{uu} \oplus E^{u} \oplus T\mathbb{S} \oplus E^{s} \oplus E^{ss}
\longrightarrow
E^{uu} \oplus E^{u} \oplus T\mathbb{S} \oplus E^{s} \oplus E^{ss}.
\]
Thus, when   \(\hat{x} \in \Lambda(\hat{p}_i)\),  \(Df(x)\) is given by  
\begin{equation}\label{exp1}
Df(x)=D\hat{f}(\hat{x})=
\renewcommand{\arraystretch}{2.9}
\begin{pmatrix}
\sigma_1 & 0 & 0 & 0 & 0 \\
\sigma_0\frac{\partial P}{\partial a} & \sigma_0\frac{\partial P}{\partial d}  & \sigma_0\frac{\partial P}{\partial c} & \sigma_0\frac{\partial P}{\partial e} & \sigma_0\frac{\partial P}{\partial b} \\
\frac{\partial Q}{\partial a} & 0 & \frac{\partial Q}{\partial c}  & 0 & \frac{\partial Q}{\partial b} \\
0 & 0 & 0 & \frac{1}{\sigma_0} & 0 \\
0 & 0 & 0 & 0 & \frac{1}{\sigma_1}
\end{pmatrix}
\end{equation}
Similarly,  when   \(\hat{x} \in\Gamma(\hat{q}_i)\),  \(Df(x)\) is given by  
\begin{equation}\label{exp2}
Df(x)=D\hat{f}(\hat{x})=
\renewcommand{\arraystretch}{2.5}
\begin{pmatrix}
\sigma_1 & 0 & 0 & 0 & 0 \\
0 & \sigma_0  & 0 & 0 & 0 \\
\frac{\partial Q}{\partial a} & 0 & \frac{\partial Q}{\partial c}  & 0 & \frac{\partial Q}{\partial b} \\
0 & 0 & 0 & \frac{1}{\sigma_0} & 0 \\
0 & 0 & 0 & 0 & \frac{1}{\sigma_1}
\end{pmatrix}
\end{equation}


By combining relation~\eqref{big} and  Fact~\ref{fact2}, we obtain positive constants \( \tau_1 > 0 \) such that the cones
\begin{equation}\label{tau1}
\mathscr{C}_{\tau_1}(E^{uu}, E^u \oplus T\mathbb{S} \oplus E^s) \quad \text{and} \quad
\mathscr{C}_{\tau_1}(E^{ss}, E^u \oplus T\mathbb{S} \oplus E^s)
\end{equation}
are forward-invariant and backward-invariant under \(f\), respectively.
It follows from Lemma~\ref{cone} that \(f\) admits a partially hyperbolic splitting
\[
TM = \widetilde{E^{uu}} \oplus_\succ \bigl(E^u \oplus T\mathbb{S} \oplus E^s\bigr) \oplus_\succ \widetilde{E^{ss}} ,
\]
where $M=\mathbb{T}^2 \times \mathbb{S} \times \mathbb{T}^2$.   It is easy to see that \( E^u \oplus T\mathbb{S} \oplus E^s \oplus E^{ss}=\bigl(E^u \oplus T\mathbb{S} \oplus E^s\bigr) \oplus_\succ \widetilde{E^{ss}} \).
Then, by Lemma~\ref{mutually},  we obtain 
\begin{equation}\label{dazicong}
 TM = \widetilde{E^{uu}} \oplus_\succ (E^u \oplus T\mathbb{S} \oplus E^s \oplus E^{ss}).
\end{equation}

Since the restriction of \( Df^{-1} \) to \( T\mathbb{S} \oplus E^{u} \) has the form
\[
Df^{-1}\big|_{T\mathbb{S} \oplus E^{u}}
= \begin{pmatrix}
\left( \dfrac{\partial Q}{\partial c} \right)^{-1} & 0 \\
* & \left( \sigma_0 \dfrac{\partial P}{\partial d} \right)^{-1}
\end{pmatrix},
\]
it follows, by combining relations~\eqref{zhongjian1} and~\eqref{zhongjian2} with the first item of Lemma~\ref{fact1}, that there exists a constant \( \tau_3 > 0 \) such that
\[
\mathscr{C}_{\tau_3}(T\mathbb{S}, E^u)
\]
is backward-invariant. It then follows from Lemma~\ref{cone} that the bundle \(T\mathbb{S} \oplus E^u\) admits a dominated splitting
\begin{equation}\label{lianggeES}
T\mathbb{S} \oplus E^u = E^u \oplus_\succ \widetilde{T\mathbb{S}}.
\end{equation}
By the third item of Lemma~\ref{fact1}, we have
\begin{equation}\label{xianghu1}
\frac{1}{\sqrt{1+\tau_3^2}}\max\Bigl\{ \frac{\partial Q}{\partial c}(a,b,c) : a,b,c \in \mathbb{R} \Bigr\}^{-n}\le \bigl\| Df^{-n}\big|_{\widetilde{T\mathbb{S}}} \bigr\|
\le \min\Bigl\{ \frac{\partial Q}{\partial c}(a,b,c) : a,b,c \in \mathbb{R} \Bigr\}^{-n}\sqrt{1+\tau_3^2}.
\end{equation}

The restriction of \( Df^{-1} \) to \( E^s \oplus E^{u} \) has the form
\[
Df^{-1}\big|_{E^s \oplus E^{u}} 
= \begin{pmatrix}
\sigma_0 & 0 \\ 
* & \left( \sigma_0 \dfrac{\partial P}{\partial d} \right)^{-1}
\end{pmatrix}.
\]
By combining the first item of Lemma~\ref{fact1} with inequality~\eqref{welldefined3}, there exists a constant \( \tau_4 > 0 \) such that the cone
$$\mathscr{C}_{\tau_4}(E^s, E^u)$$ is backward-invariant, which, further, by Lemma~\ref{cone},  implies that the bundle \(E^s \oplus E^u\) admits a dominated splitting
\[
E^u \oplus E^s = E^u \oplus_\succ \widetilde{E^s}.
\]
By the second item of Lemma~\ref{fact1}, we have
\begin{equation}\label{xianghu2}
\frac{1}{\sqrt{1+\tau_4^2}} \sigma_0^{n} \le \bigl\| Df^{-n}\big|_{\widetilde{E^s}} \bigr\|
\le \sigma_0^{n} \sqrt{1+\tau_4^2}.
\end{equation}

Because of our setup in inequality~\eqref{xiaodeg}, and by combining the inequalities~\eqref{xianghu1} and~\eqref{xianghu2} that we have just obtained, it follows that:
\[
\widetilde{T\mathbb{S}}\oplus_\succ\widetilde{E^s}.
\]
Thus, based on Lemma~\ref{mutually}, we obtain:
\[
TM = \widetilde{E^{uu}} \oplus_\succ\Bigl( E^u \oplus_\succ \bigl( \widetilde{T\mathbb{S}} \oplus_\succ \widetilde{E^s} \bigr) \Bigr) \oplus_\succ \widetilde{E^{ss}}.
\]
and
\begin{equation}\label{fenjiezicong}
TM = \widetilde{E^{uu}} \oplus_\succ E^u \oplus_\succ \Bigl(\widetilde{T\mathbb{S}} \oplus_\succ \bigl( \widetilde{E^s}  \oplus_\succ \widetilde{E^{ss}}\bigr)\Bigr).
\end{equation}


By computation, we have the following fixed points.

\textbf{The two fixed points with an unstable index of 1:}
\begin{equation}\label{exp3}
Df((p_1,0,p_1))=Df((p_2,1,p_2))=
\renewcommand{\arraystretch}{2}
\begin{pmatrix}
\sigma_1 & 0 & 0 & 0 & 0 \\
0 & \frac{3}{4}  & 0 & 0 & 0 \\
0 & 0 & \frac{1}{2}  & 0 & 0 \\
0 & 0 & 0 & \frac{1}{\sigma_0} & 0 \\
0 & 0 & 0 & 0 & \frac{1}{\sigma_1}
\end{pmatrix}
\end{equation}
\begin{equation}
	\includegraphics[width=1\textwidth]{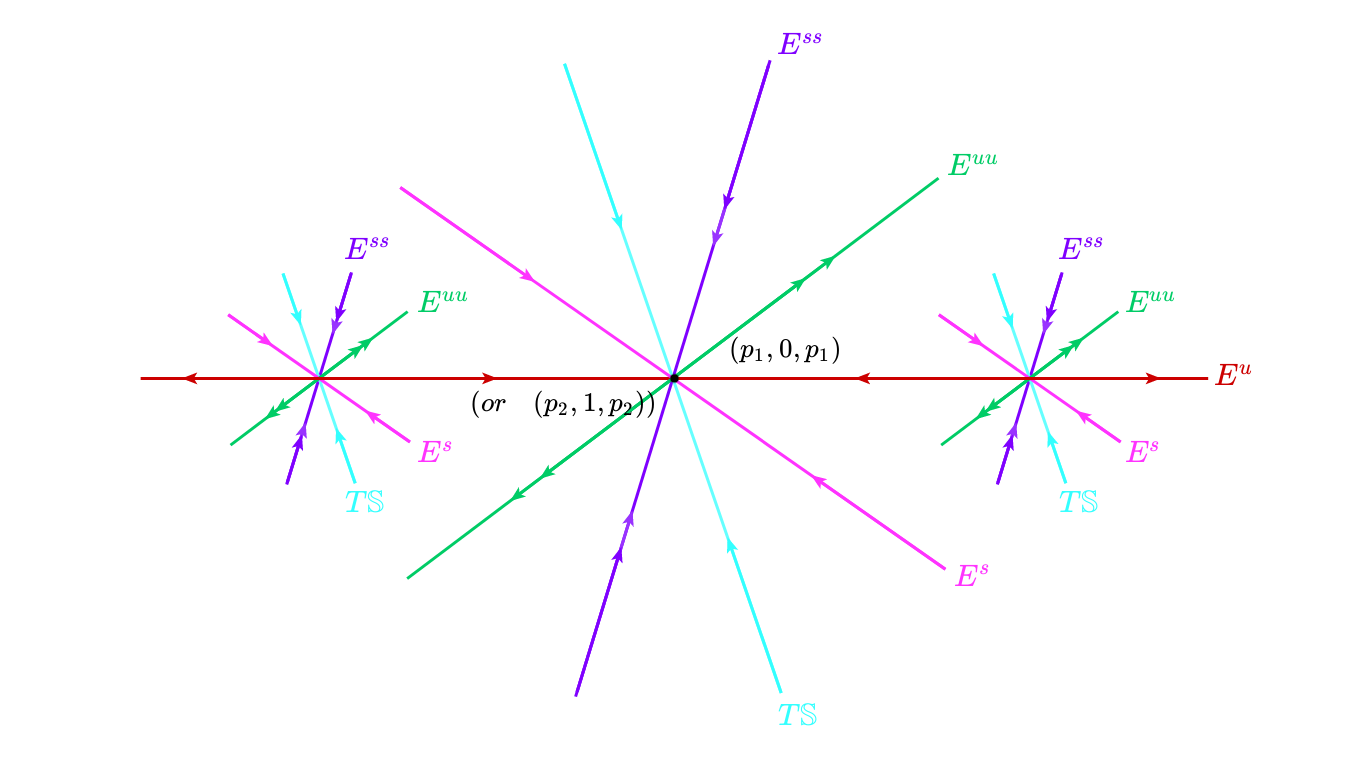}
\end{equation}
\textbf{The two fixed points with an unstable index of 2:}
\begin{equation}\label{exp4}
Df((q_1,0,q_1))=Df((q_2,1,q_2))=
\renewcommand{\arraystretch}{2}
\begin{pmatrix}
\sigma_1 & 0 & 0 & 0 & 0 \\
0 & \sigma_0  & 0 & 0 & 0 \\
0 & 0 & \frac{1}{2}  & 0 & 0 \\
0 & 0 & 0 & \frac{1}{\sigma_0} & 0 \\
0 & 0 & 0 & 0 & \frac{1}{\sigma_1}
\end{pmatrix}
\end{equation}
\begin{equation}
	\includegraphics[width=0.8\textwidth]{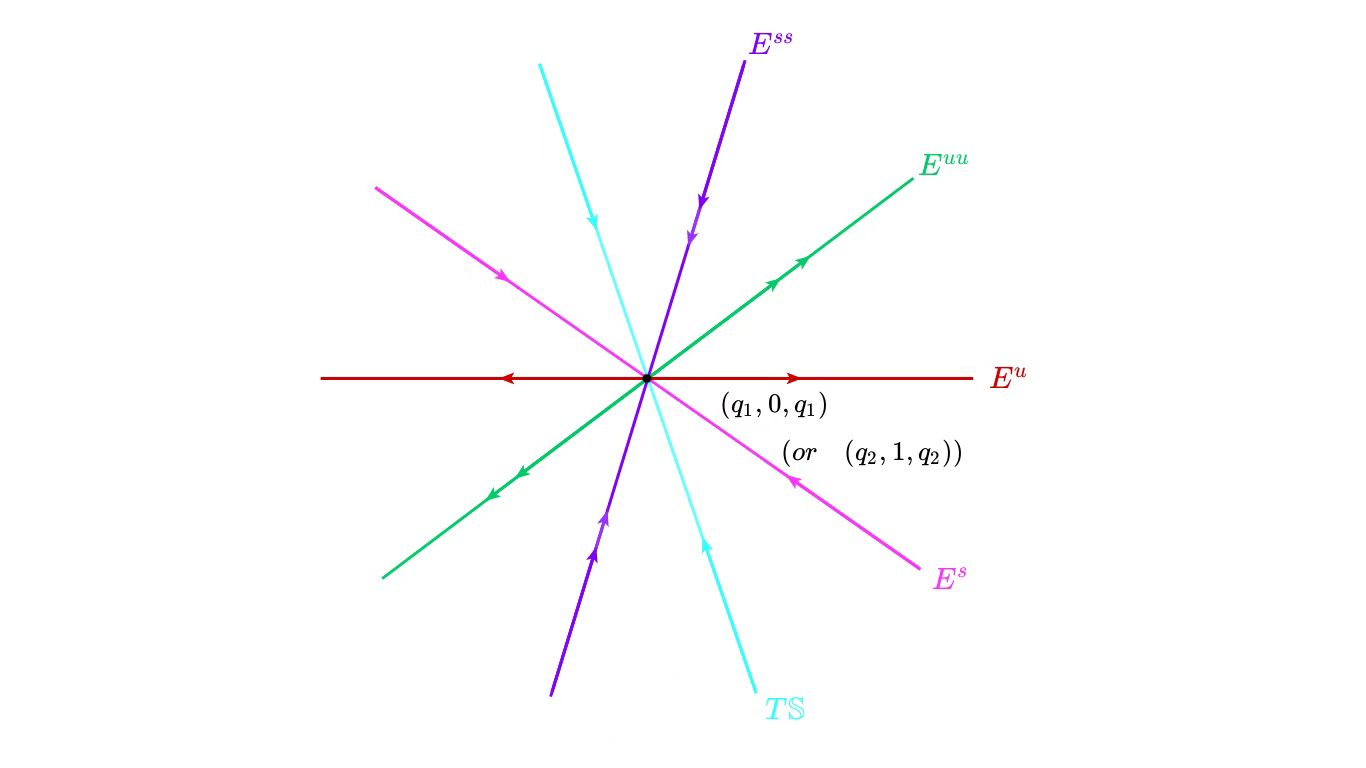}
\end{equation}
\textbf{The two fixed points with an unstable index of 3:}
\begin{equation}\label{exp5}
Df((q_1,1,q_1))=Df((q_2,0,q_2))=
\renewcommand{\arraystretch}{2}
\begin{pmatrix}
\sigma_1 & 0 & 0 & 0 & 0 \\
0 & \sigma_0  & 0 & 0 & 0 \\
0 & 0 & \frac{3}{2}  & 0 & 0 \\
0 & 0 & 0 & \frac{1}{\sigma_0} & 0 \\
0 & 0 & 0 & 0 & \frac{1}{\sigma_1}
\end{pmatrix}
\end{equation}
\begin{equation}
	\includegraphics[width=0.8\textwidth]{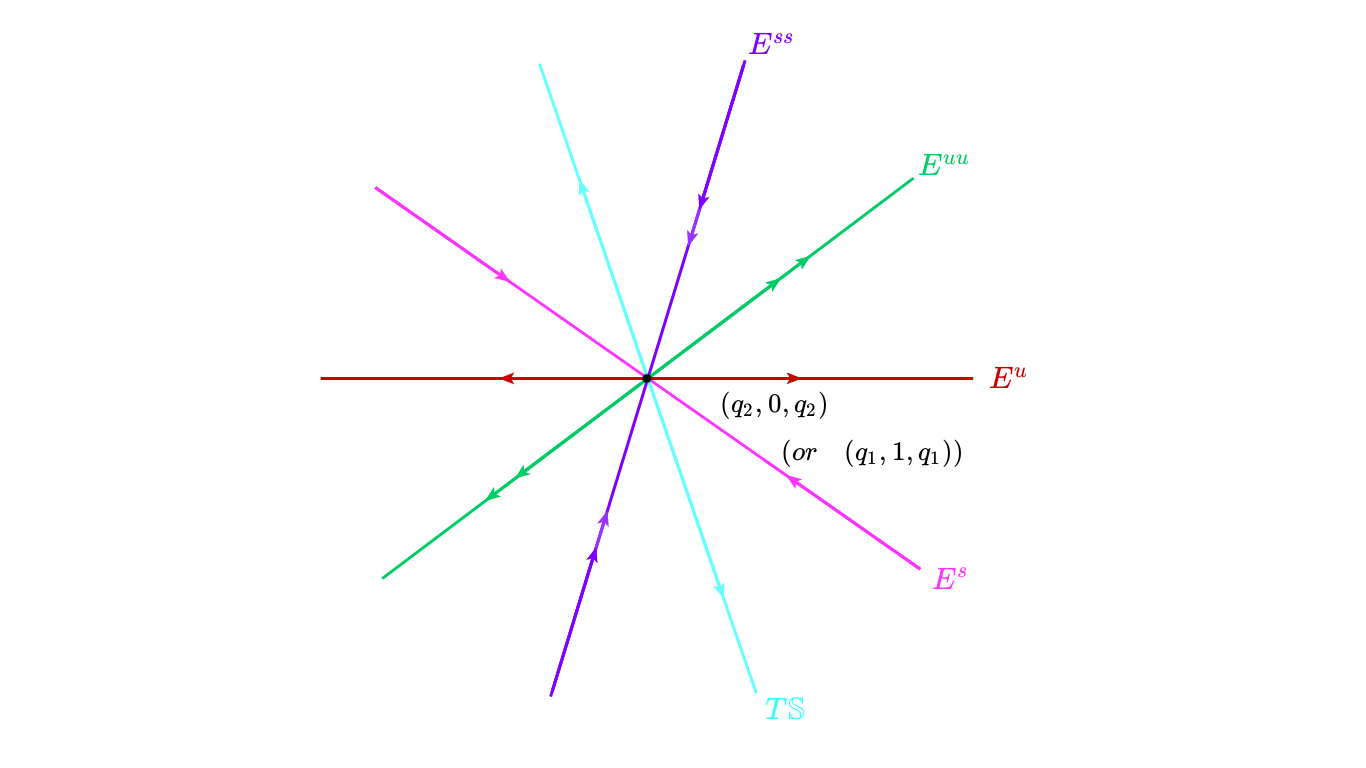}
\end{equation}
At this point, the map \(f\) is not uniformly expanding along \(E^u\). We define 
\[
E^{cu} := E^u, \quad
E^{cs} :=\widetilde{T\mathbb{S}} \oplus_\succ \bigl( \widetilde{E^s}  \oplus_\succ \widetilde{E^{ss}}\bigr).
\]

We now state the following proposition.


\begin{pro}\label{claim}
 For any open $C^1$-disk \(D\) transverse to \(E^{cs}\), if
\[
D \cap \bigl( \mathcal{F}^{ss}(q_1) \times (-1,1) \times \mathbb{T}^2 \bigr) \neq \varnothing \quad (\text{resp. } D \cap \bigl( \mathcal{F}^{ss}(q_2) \times (0,2) \times \mathbb{T}^2 \bigr) \neq \varnothing),
\]
then \(D\) intersects
\[
W^{s}\bigl((q_1,0,q_1), f\bigr) \quad (\text{resp. } W^{s}\bigl((q_2,1,q_2), f\bigr))
\]
transversely.
\end{pro}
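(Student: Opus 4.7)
The plan is to iterate $D$ forward by $f$ and exhibit a transverse intersection of some iterate $f^{N+M}(D)$ with $W^s((q_1,0,q_1),f)$; pulling back by the diffeomorphism $f^{-(N+M)}$ then yields the claim for $D$ itself. Pick $x_0 \in D \cap (\mathcal{F}^{ss}(q_1) \times (-1,1) \times \mathbb{T}^2)$. By Lemma~\ref{mostcon}, $\pi^3_2(x_0) \in W^s((q_1,0),g) = \mathcal{F}^{ss}(q_1) \times (-1,1)$, so $g^n(\pi^3_2(x_0)) \to (q_1,0)$. Choose $N_0$ large so that for all $n \geq N_0$, $g^n(\pi^3_2(x_0))$ lies in a small neighborhood $U$ of $(q_1,0)$ whose lift is contained in $\Gamma(\hat{q}_1)$ and disjoint from every $\Lambda(\hat{p}_i)$ and $\Gamma(\hat{q}_2)$. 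From the definition of $\widehat{ID}$ in $\Gamma(\hat{q}_1)$, on $U \times \mathbb{T}^2$ the map $f$ is a skew product $(x,y,z) \mapsto (g(x,y), A^{n_0}(z))$ with fiber action $A^{n_0}$ independent of the base. In particular, on $U \times \mathbb{T}^2$ the bundle $E^u$ coincides with the $A^{n_0}$-unstable direction $\mathcal{F}^u$, and $W^s_{\mathrm{loc}}((q_1,0,q_1),f) = W^s_{\mathrm{loc}}((q_1,0),g) \times \mathcal{F}^s_{\mathrm{loc}}(q_1)$.

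Next, exploit the dominated splitting~\eqref{fenjiezicong}: since $TD$ is transverse to $E^{cs}$, it lies in a $2$-dimensional cone around $\widetilde{E^{uu}} \oplus E^u$, and by domination $Tf^n(D)$ converges exponentially in angle to $\widetilde{E^{uu}} \oplus E^u$. Take $N \geq N_0$ sufficiently large, and a subdisk $D' \subset f^N(D)$ around $f^N(x_0) = (g^N(\pi^3_2(x_0)), z_N)$ that is $C^1$-close to a product $\gamma \times \eta$: here $\gamma$ is a short arc tangent to $\widetilde{E^{uu}_g}$ through $g^N(\pi^3_2(x_0))$ (hence transverse to $W^s((q_1,0),g)$), and $\eta$ is an arc tangent to $\mathcal{F}^u$ through $z_N$ in the last $\mathbb{T}^2$. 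Crucially the two side-lengths $|\gamma|$ and $|\eta|$ can be chosen independently within the stretched disk $f^N(D)$; I pick $|\gamma| \leq \delta \sigma_1^{-M}$ (to control subsequent base stretching) and $|\eta|$ bounded below by a fixed constant (to leave room for subsequent fiber stretching).

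Now iterate $f$ another $M$ steps within the skew-product region: $f^M(D')$ is approximately $g^M(\gamma) \times A^{Mn_0}(\eta)$. The base arc $g^M(\gamma)$ has length at most $\delta$ and passes through $g^{N+M}(\pi^3_2(x_0)) \in W^s((q_1,0),g)$. The fiber arc $A^{Mn_0}(\eta)$ is a piece of $\mathcal{F}^u$-leaf of length of order $\sigma_0^M |\eta|$ in the last $\mathbb{T}^2$; by the minimality of the $A^{n_0}$-unstable foliation together with its transversality to the stable foliation, for $M$ large enough this piece transversely crosses $\mathcal{F}^s(q_1)$ at a point near $q_1$, say at parameter $t^*$. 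Taking $(s,t) = (0, t^*) \in D' \approx \gamma \times \eta$, the corresponding point of $f^{N+M}(D')$ has base coordinate in $W^s((q_1,0),g)$ and fiber coordinate in $\mathcal{F}^s(q_1)$ near $q_1$; by the product form of $W^s((q_1,0,q_1),f)$ in $U \times \mathbb{T}^2$, it lies in $W^s((q_1,0,q_1),f)$. Transversality is automatic: $Tf^{N+M}(D') \approx \widetilde{E^{uu}} \oplus E^u$ is complementary to $TW^s((q_1,0,q_1),f) = E^{cs}$ at $(q_1,0,q_1)$ by partial hyperbolicity.

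The main technical obstacle is making the approximate-product statement $D' \approx \gamma \times \eta$ quantitatively precise, so that after $M$ further iterates inside the skew-product region the deviation of $f^{N+M}(D')$ from the exact product $g^M(\gamma) \times A^{Mn_0}(\eta)$ remains small enough that the transverse crossing of $\mathcal{F}^s(q_1)$ by the fiber arc actually persists on $f^{N+M}(D')$. Making $N$ large shrinks the $C^1$-angle between $TD'$ and $\widetilde{E^{uu}} \oplus E^u$ arbitrarily (by domination); a standard Gronwall-type estimate along the $M$ skew-product iterates then keeps the angular error below the transverse opening between $\mathcal{F}^u$ and $\mathcal{F}^s(q_1)$, which suffices.
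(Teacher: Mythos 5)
Your overall strategy is the same as the paper's: iterate $D$ forward, observe that the orbit of a point of $D \cap (\mathcal{F}^{ss}(q_1)\times(-1,1)\times\mathbb{T}^2)$ stays in a region where $f$ acts as a skew product over $g$ with fiber $A^{n_0}$, use domination to flatten the iterated disk onto the strong-unstable plane $E^{uu}\oplus E^u$, and then locate a transverse crossing with the product stable set $W^s_{\mathrm{loc}}((q_1,0),g)\times\mathcal{F}^s(q_1)$. The paper executes this by passing to a convergent subsequence $f^{n_i}(\widetilde{x})\to(x_0,y_0,z_0)$ and showing that a subdisk of $f^{n_i}(D_0)$ converges in $C^1$ to the fixed-size product plaque $\mathcal{F}^{uu}_\varepsilon(x_0)\times\{y_0\}\times\mathcal{F}^u_\varepsilon(z_0)$; since the \emph{short} fiber arc $\mathcal{F}^u_\varepsilon(z_0)$ already meets $\mathcal{F}^s(q_1)$ transversely — because for a linear Anosov automorphism the single leaf $\mathcal{F}^s(q_1)$ crosses every unstable arc, however small — the limit plaque meets $W^s$ transversely and $C^1$-persistence of transversality finishes the proof.

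Where you diverge is in inserting $M$ further iterates to stretch the fiber arc $\eta$ into a long segment $A^{Mn_0}(\eta)$ before invoking minimality of the $\mathcal{F}^u$-foliation to produce the crossing. This detour is unnecessary: the same density fact that makes a long $\mathcal{F}^u$-segment cross $\mathcal{F}^s(q_1)$ already makes the short segment cross it (the lifted $\mathcal{F}^s(q_1)$ is a $\mathbb{Z}^2$-translated family of lines with irrational slope whose trace on any transverse line is a dense subgroup). By stretching, you create exactly the quantitative difficulty you then single out as "the main technical obstacle'': you must keep $f^{N+M}(D')$ $C^1$-close to the product $g^M(\gamma)\times A^{Mn_0}(\eta)$ while the fiber factor grows like $\sigma_0^M$, and you must juggle the order in which $N$, $M$, $|\gamma|$, $|\eta|$ are fixed. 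Such an estimate can in fact be pushed through (the graph function over $E^{uu}\oplus E^u$ contracts in $C^1$ under iteration in the product region, since $\phi'\approx 1/2$ near $(q_1,0)$ and $\sigma_0,\sigma_1$ dominate), but it is a genuine amount of unproved bookkeeping in your write-up. If you replace the stretching step by the observation that $\eta$ already intersects $\mathcal{F}^s(q_1)$ transversely at any size, your argument collapses to the paper's: flatten by domination, exhibit the transverse crossing in the limit product plaque, and invoke persistence of transversality; no Gronwall estimate is needed.

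One further small remark: you ask the crossing to occur "at a point near $q_1$'' so that the local product form of $W^s((q_1,0,q_1),f)$ applies. This is also an unnecessary restriction: on $U\times\mathbb{T}^2$ the stable set contains $W^s_{\mathrm{loc}}((q_1,0),g)\times\mathcal{F}^s(q_1)$ with the \emph{global} stable leaf $\mathcal{F}^s(q_1)$ in the fiber, since the fiber dynamics stays $A^{n_0}$ along the entire forward orbit. Recognizing this removes yet another reason for stretching the arc.
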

Let \(B_\gamma(x,\mathbb{T}^2)\) denote the open ball of radius \(\gamma\) in \(\mathbb{T}^2\) centered at the point \(x\in \mathbb{T}^2\). Since   
\[
f = A^{n_1} \times \phi \times A^{n_0} \quad\text{on}\quad B_{\delta/2}(q_1,\mathbb{T}^2)\times (-1,1) \times \mathbb{T}^2
\]
it follows that 
\[
\mathcal{F}^{ss}_{\delta/2}(q_1) \times (-1,1) \times \mathcal{F}^{s}(q_1)
\subset
W^{s}\bigl((q_1,0,q_1), f\bigr).
\]
 We shall prove only the former case, as the case in parentheses follows by symmetry.
\begin{proof}
Given the product form of \(f\) on the region
$
B_{\delta/2}(q_1,\mathbb{T}^2)\times (-1,1) \times \mathbb{T}^2
$,
it is clear that there exists \(\varepsilon \ll \frac{\delta}{2}\) such that, for any
\[
x_0 \in \mathcal{F}^{ss}_{\delta/100}(q_1), \quad y_0 \in (-1,1), \quad z_0 \in \mathbb{T}^2,
\]
the disk
\[
\mathcal{F}^{uu}_{\varepsilon}(x_0) \times \{y_0\} \times \mathcal{F}^{u}_{\varepsilon}(z_0)
\]
intersects transversely with
\[
\mathcal{F}^{ss}_{\delta/2}(q_1) \times (-1,1) \times \mathcal{F}^s(q_1)
\]
at some point \((x_0, y_0, z_0^s)\), where \(z_0^s \in \mathcal{F}^s(q_1)\cap\mathcal{F}^{u}_{\varepsilon}(z_0)\). This transversality holds because the tangent spaces (on the lifted space) at \((x_0, y_0, z_0^s)\) of
\[
\mathcal{F}^{uu}_{\varepsilon}(x_0) \times \{y_0\} \times \mathcal{F}^{u}_{\varepsilon}(z_0) \quad \text{and} \quad \mathcal{F}^{ss}_{\delta/2}(q_1) \times (-1,1) \times \mathcal{F}^s(q_1)
\]
are, respectively,
\[
\{(a,b,0,0,0) \mid a \in \mathbb{R}^{uu},\ b \in \mathbb{R}^u\} \quad \text{and} \quad \{(0,0,c,d,e) \mid c \in \mathbb{R},\ d \in \mathbb{R}^s,\ e \in \mathbb{R}^{ss}\}.
\]

Since \(f\) has the product form on   $\mathcal{F}^{ss}_{\delta/4}(q_1) \times (-1,1) \times \mathbb{T}^2$ and
\[
f\bigl(\mathcal{F}^{ss}_{\delta/4}(q_1) \times (-1,1) \times \mathbb{T}^2\bigr) \subset \mathcal{F}^{ss}_{\delta/4}(q_1) \times (-1,1) \times \mathbb{T}^2,
\]
it follows from the uniqueness of the splitting that
\[
\widetilde{E^{uu}}  \oplus E^{cu} = E^{uu} \oplus E^u \quad \text{and} \quad
E^{cs} = E^s \oplus T\mathbb{S} \oplus E^{ss}
\]
at every point of
\[
\mathcal{F}^{ss}_{\delta/4}(q_1) \times (-1,1) \times \mathbb{T}^2.
\]
Meanwhile, \(f\) is uniformly expanding along
 $E^{uu} \oplus E^{u}$
for all \(x \in \mathcal{F}^{ss}_{\delta/4}(q_1) \times (-1,1) \times \mathbb{T}^2\).

For any open disk \(D_0\) transverse to \(E^{cs}\) with
\[
D_0 \cap \bigl( \mathcal{F}^{ss}_{\delta/4}(q_1) \times (-1,1) \times \mathbb{T}^2 \bigr) \neq \varnothing,
\]
take a point
\[
\widetilde{x}\in D_0 \cap \bigl( \mathcal{F}^{ss}_{\delta/4}(q_1) \times (-1,1) \times \mathbb{T}^2 \bigr).
\]
By considering an appropriate subsequence, we may assume that
\[
f^{n_i}(\widetilde{x}) \longrightarrow (x_0,y_0,z_0)
\qquad \text{as } i\to+\infty .
\]
By the uniform expansion along
\[
\widetilde{E^{uu}} \oplus E^{cu}= E^{uu}\oplus E^{u} \quad \text{on} \quad\mathcal{F}^{ss}_{\delta/4}(q_1)\times (-1,1)\times \mathbb{T}^2
\]
together with the dominated splitting, 
the tangent spaces \( T_{f^{n_i}(\widetilde{x})}\big(f^{n_i}(D_0)\big) \) converge to
\[
\{(a,b,0,0,0)\mid a\in\mathbb{R}^{uu},; b\in\mathbb{R}^{u}\}.
\]
Moreover, a subdisk of \( f^{n_i}(D_0) \) (in the \(C^1\) topology)  converges to the disk \[
\mathcal{F}^{uu}_{\varepsilon}(x_0) \times \{y_0\} \times \mathcal{F}^{u}_{\varepsilon}(z_0).
\]

Since
\[
\mathcal{F}^{uu}_{\varepsilon}(x_0)\times\{y_0\}\times \mathcal{F}^{u}_{\varepsilon}(z_0)
\]
intersects transversely with
\[
\mathcal{F}^{ss}_{\delta/4}(q_1)\times (-1,1)\times \mathcal{F}^{s}(q_1)\subset W^{s}\bigl((q_1,0,q_1), f\bigr),
\]
it follows that the subdisk of \(f^{n_i}(D_0)\) which converges to
\[
\mathcal{F}^{uu}_{\varepsilon}(x_0) \times \{y_0\} \times \mathcal{F}^{u}_{\varepsilon}(z_0)
\]
also intersects transversely
\[
\mathcal{F}^{ss}_{\delta/4}(q_1) \times (-1,1) \times \mathcal{F}^{s}(q_1).
\]
Finally, by the invariance of
\[
W^{s}\bigl((q_1,0,q_1), f\bigr),
\]
the original disk \(D_0\) also intersects
\[
W^{s}\bigl((q_1,0,q_1), f\bigr)
\]
transversely.


For the disk \(D\) in the this Proposition~\ref{claim},  we may assume that \(D \subset M\) is an open disk; otherwise, we can replace \(D\) by its intersection with a sufficiently small open ball.  
There exists a point
\[
\widetilde{y}\in D\cap\bigl(\mathcal{F}^{ss}(q_1)\times(-1,1)\times\mathbb{T}^2\bigr).
\]
Hence, for some \(m>0\), the forward iterate \(f^{m}(\widetilde{y})\) lies inside
\[
\mathcal{F}^{ss}_{\delta/4}(q_1)\times(-1,1)\times\mathbb{T}^2.
\]
Since a diffeomorphism preserves transversality, the disk \( f^m(D) \), like \( D \), is also transverse to \( E^{cs} \). Therefore, \(f^{m}(D)\) can serve as the disk \(D_0\) used in the argument above.  By the invariance of
$
W^{s}\bigl((q_1,0,q_1),f\bigr)
$
again, we conclude that the disk \(D\) intersects \(W^{s}\bigl((q_1,0,q_1),f\bigr)\) transversely. A similar argument works for the case when
\[
D \cap \bigl( \mathcal{F}^{ss}(q_2) \times (0,2) \times \mathbb{T}^2 \bigr) \neq \varnothing.
\]
\end{proof}

\begin{lem}\label{skeleton22}
For any \( C^1 \)-disk \( D \) transverse to \( E^{cs} \), we have
\[
D \cap \bigl( \mathcal{F}^{ss}(q_1) \times (-1,1) \times \mathbb{T}^2 \bigr) \neq \varnothing \quad \text{or} \quad D \cap \bigl( \mathcal{F}^{ss}(q_2) \times (0,2) \times \mathbb{T}^2 \bigr) \neq \varnothing.
\]
\end{lem}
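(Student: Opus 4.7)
The strategy is to push $D$ down to $\mathbb{T}^2 \times \mathbb{S}$ via the semiconjugacy $\pi^3_2 \circ f = g \circ \pi^3_2$, and then invoke Lemma~\ref{mostcon} for the subsystem $g$. First I observe that the two sets in the statement are exactly the $\pi^3_2$-preimages of $W^s((q_1,0),g) = \mathcal{F}^{ss}(q_1) \times (-1,1)$ and $W^s((q_2,1),g) = \mathcal{F}^{ss}(q_2) \times (0,2)$ from Lemma~\ref{mostcon}, so they are $f$-invariant; thus it suffices to prove that $\pi^3_2(D)$ meets one of these two stable manifolds of $g$.

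The main step is to verify that $\pi^3_2(D)$ contains a smooth submanifold transverse to the center-stable bundle $T\mathbb{S} \oplus_\succ \widetilde{E^{ss}_g}$ of $g$, after which Lemma~\ref{mostcon} applies. Here $\ker D\pi^3_2$ is the tangent bundle $E^u \oplus E^s$ of the last $\mathbb{T}^2$ factor. Using the cone descriptions from Section~\ref{five} --- $\widetilde{T\mathbb{S}} \subset \mathscr{C}_{\tau_3}(T\mathbb{S}, E^u)$, $\widetilde{E^s} \subset \mathscr{C}_{\tau_4}(E^s, E^u)$, $\widetilde{E^{ss}} \subset \mathscr{C}_{\tau_1}(E^{ss}, E^u \oplus T\mathbb{S} \oplus E^s)$, and $\widetilde{E^{uu}} \subset \mathscr{C}_{\tau_1}(E^{uu}, E^u \oplus T\mathbb{S} \oplus E^s)$ --- one checks that the perturbation components of $\widetilde{T\mathbb{S}}$, $\widetilde{E^s}$, $\widetilde{E^{ss}}$ lie inside $\ker D\pi^3_2$ together with $T\mathbb{S}$, so that $D\pi^3_2(E^{cs}) \subset T\mathbb{S} \oplus E^{ss} = T\mathbb{S} \oplus \widetilde{E^{ss}_g}$; whereas every nonzero vector of $\widetilde{E^{uu}}$ has $D\pi^3_2$-image with a nonzero $E^{uu}$-component, hence is transverse to $T\mathbb{S} \oplus \widetilde{E^{ss}_g}$.

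Since $D$ is transverse to $E^{cs}$ and $TM = \widetilde{E^{uu}} \oplus_\succ E^u \oplus_\succ E^{cs}$ by~\eqref{fenjiezicong}, I can choose at each point of $D$ a tangent vector $v = v_{uu} + v_u + v_{cs} \in TD$ with $v_{uu} \ne 0$ in the $\widetilde{E^{uu}}$-slot of this decomposition. Then $D\pi^3_2(v)$ retains its nonzero $E^{uu}$-component (because $E^u \subset \ker D\pi^3_2$ and $D\pi^3_2(v_{cs}) \in T\mathbb{S} \oplus \widetilde{E^{ss}_g}$), so $\pi^3_2(D)$ is genuinely transverse to $T\mathbb{S} \oplus_\succ \widetilde{E^{ss}_g}$ at every point of $D$. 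Applying Lemma~\ref{mostcon} to a small embedded piece of $\pi^3_2(D)$ then produces an intersection with $W^s((q_1,0),g)$ or $W^s((q_2,1),g)$, and taking $\pi^3_2$-preimages finishes the proof. The main technical obstacle is the cone bookkeeping in the second paragraph: one must carefully track the perturbation directions of each summand in the cone descriptions of $E^{cs}$ to guarantee that $D\pi^3_2(E^{cs})$ does not leak into the $\widetilde{E^{uu}_g}$ direction of $g$.
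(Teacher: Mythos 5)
Your proposal is correct but takes a genuinely different route from the paper's. The paper argues by contradiction directly in $M$: assuming $D$ misses both sets, the density of $\mathcal{F}^{ss}(q_1)$ in $\mathbb{T}^2$ plus connectedness forces a small subdisk $D_0\subset D$ to project under $\pi^3_1$ into a single $ss$-leaf, so $TD_0\subset E^u\oplus T\mathbb{S}\oplus E^s\oplus E^{ss}$; combined with the inclusion $E^{cs}\subset E^u\oplus T\mathbb{S}\oplus E^s\oplus E^{ss}$ (relation~\eqref{hh}, derived from Lemmas~\ref{zhibiao} and~\ref{mutually}), this contradicts transversality of $D_0$ to $E^{cs}$. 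You instead push $D$ down to $\mathbb{T}^2\times\mathbb{S}$ by $\pi^3_2$, use the cone containments to check that $D\pi^3_2(E^{cs})\subset T\mathbb{S}\oplus E^{ss}=T\mathbb{S}\oplus\widetilde{E^{ss}_g}$ while $D\pi^3_2$ of a suitable tangent vector of $D$ retains its $E^{uu}$-component, and then invoke the first bullet of Lemma~\ref{mostcon}; taking preimages gives the conclusion. Both arguments rest on the same two ingredients --- the fact that $E^{cs}$ has no $E^{uu}$-component, and the density of $\mathcal{F}^{ss}(q_i)$ --- but your reduction is more modular (it reuses the already-established statement for the factor $g$) and actually produces a transversal intersection, which the paper handles separately in Proposition~\ref{claim}. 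Two points to tighten: the equality $T\mathbb{S}\oplus E^{ss}=T\mathbb{S}\oplus\widetilde{E^{ss}_g}$ deserves a one-line remark ($\widetilde{E^{ss}_g}\subset\mathscr{C}_{\rho_2}(E^{ss},T\mathbb{S})\subset E^{ss}\oplus T\mathbb{S}$, and both sides are $2$-dimensional); and since $\pi^3_2|_D$ need not be an immersion, the phrase ``$\pi^3_2(D)$ is transverse at every point of $D$'' should be replaced by the precise statement that for a $C^1$ curve $\gamma$ in $D$ with $\gamma'(0)=v$, the arc $\pi^3_2\circ\gamma$ is, for short time, an embedded $C^1$ disk transverse to $T\mathbb{S}\oplus\widetilde{E^{ss}_g}$ --- which is what your ``small embedded piece'' gestures at.
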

\begin{proof}
If \(D \subset \mathbb{T}^2 \times \{1\} \times \mathbb{T}^2\), then we only need to show that
\[
D \cap \bigl( \mathcal{F}^{ss}(q_1) \times (0,2) \times \mathbb{T}^2 \bigr) \neq \varnothing.
\]
If \(D \not\subset \mathbb{T}^2 \times \{1\} \times \mathbb{T}^2\), it suffices to show that
\[
D \cap \bigl( \mathcal{F}^{ss}(q_1) \times (-1,1) \times \mathbb{T}^2 \bigr) \neq \varnothing.
\]
Since the argument for the latter case also applies to the former, it suffices to consider only the case \(D \not\subset \mathbb{T}^2 \times \{1\} \times \mathbb{T}^2\).
Without loss of generality, we may assume that \(D \subset M\) is an open disk; otherwise, we can replace \(D\) by its intersection with a sufficiently small open ball.  


Due to the dominated splittings~\eqref{fenjiezicong} and~\eqref{dazicong}, the combination of Lemma~\ref{zhibiao} and Lemma~\ref{mutually} gives us
\begin{equation}\label{hh}
E^{cs} \subset E^u \oplus T\mathbb{S} \oplus E^s \oplus E^{ss}.
\end{equation}

There exists a point \((x_1, x_2, x_3) \in \operatorname{Int}(D) \cap \bigl( \mathbb{T}^2 \times (-1,1) \times \mathbb{T}^2 \bigr)\).
We can assume that \(x_1 \in \mathbb{T}^2\) admits an isometric square neighborhood \(U(x_1)\), which can be identified with
\[
\hat{\mathcal{F}}^{uu}_{\alpha}(x_1) \times \hat{\mathcal{F}}^{ss}_{\alpha}(x_1)( = U(x_1) ). 
\] for some \(\alpha>0\). We define the projection
\[
\pi^{uu,ss}_{uu} \colon
\hat{\mathcal{F}}^{uu}_{\alpha}(x_1) \times \hat{\mathcal{F}}^{ss}_{\alpha}(x_1)
\longrightarrow
\hat{\mathcal{F}}^{uu}_{\alpha}(x_1).
\]
(Without loss of generality, we identify \(x_1\) with the point \((0,0)\) in
\(\hat{\mathcal{F}}^{uu}_{\alpha}(x_1) \times \hat{\mathcal{F}}^{ss}_{\alpha}(x_1)\).)

Assume, on the contrary, that
\[
D\cap \bigl( \mathcal{F}^{ss}(q_1) \times (-1,1) \times \mathbb{T}^2 \bigr) = \emptyset.
\]
Then it follows that
\begin{equation}\label{non}
\pi_{1}^3(D) \subset \mathbb{T}^2 \setminus \mathcal{F}^{ss}(q_1).
\end{equation}
There exists a sufficiently small connected open subdisk
\(D_0 \subset D\) (with respect to the subspace topology) such that
\((x_1,x_2,x_3) \in D_0\) and
\[
\pi^3_1(D_0) \subset U(x_1).
\]
Combining the one-dimensional connectedness and relation~(\ref{non}), we conclude that
\[
\pi^{uu,ss}_{uu} \circ \pi^3_1 (D_0) = \{0 \}.
\]
It follows that
\[
D_0 \subset \mathcal{F}^{ss}_{\alpha}(x_1) \times (0,1) \times \mathbb{T}^2.
\]
Consequently, the tangent space of \(D_0\) 
\begin{equation}\label{hh2}
TD_0\subset E^{u} \oplus T\mathbb{S} \oplus E^{s} \oplus E^{ss}.
\end{equation}
The combination of relation~\eqref{hh2} and relation~\eqref{hh} implies that \( T D_0 + E^{cs} \subset E^{u} \oplus T\mathbb{S} \oplus E^{s} \oplus E^{ss} \). This directly contradicts the fact that \( D_0 \) is still transverse to \( E^{cs} \).
\end{proof}

\begin{thm}\label{skeleton} The set
$
S = \{(q_1,0,q_1), (q_2,1,q_2)\}$
is a skeleton for \(f\).   Moreover, the closure of 
\( W^{u}\bigl((q_1, 0, q_1), f\bigr) \) 
(resp. \( W^{u}\bigl((q_2, 1, q_2), f\bigr) \)) 
contains the points \( (p_1,0,p_1) \) and \( (q_2,0,q_2) \) 
(resp. \( (p_2,1,p_2) \) and \( (q_1,1,q_1) \)).
\end{thm}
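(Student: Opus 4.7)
The plan is to split the proof of Theorem \ref{skeleton} into the three skeleton requirements and the separate closure assertion. First, \eqref{exp4} shows that $(q_1,0,q_1)$ and $(q_2,1,q_2)$ are fixed points of $f$ with eigenvalues $\sigma_1,\sigma_0>1$ on $E^{uu}\oplus E^u$ and $1/2,\,1/\sigma_0,\,1/\sigma_1<1$ on the remaining three directions, so both are hyperbolic with stable index $3=\dim E^{cs}$. The first skeleton axiom is then the direct combination of Lemma \ref{skeleton22} (every $C^1$-disk transverse to $E^{cs}$ meets either $\mathcal{F}^{ss}(q_1)\times(-1,1)\times\mathbb{T}^2$ or $\mathcal{F}^{ss}(q_2)\times(0,2)\times\mathbb{T}^2$) and Proposition \ref{claim} (that intersection is actually transverse to the stable manifold of $(q_1,0,q_1)$ or $(q_2,1,q_2)$). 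For the no-heteroclinic axiom, the key observation is that $\mathbb{T}^2\times\{0\}\times\mathbb{T}^2$ and $\mathbb{T}^2\times\{1\}\times\mathbb{T}^2$ are $f$-invariant, so $W^u((q_2,1,q_2),f)\subset \mathbb{T}^2\times\{1\}\times\mathbb{T}^2$, while any point in $W^s((q_1,0,q_1),f)$ has its $y$-coordinate converge to $0$ under forward iteration, ruling out membership in the $\{y=1\}$ slice; the two sets are therefore disjoint, and the symmetric pair follows identically.

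The closure assertion is the substantive content. Restricting to the invariant slice I put $h:=f|_{\mathbb{T}^2\times\{0\}\times\mathbb{T}^2}$. Near $(q_1,0,q_1)$ the modification $\widehat{ID}$ only touches the $y$-coordinate via $Q$, and $Q(a,b,0)=0$, so $h$ reduces locally to the Anosov product $A^{n_1}\times A^{n_0}$ and $W^u_{\mathrm{loc}}((q_1,0,q_1),h)=\mathcal{F}^{uu}_{\mathrm{loc}}(q_1)\times\{0\}\times\mathcal{F}^u_{\mathrm{loc}}(q_1)$, a 2-disk tangent to $E^{uu}\oplus E^u$. To place $(p_1,0,p_1)$ in the closure, I would apply the $\lambda$-lemma inside the slice: the slice stable manifold $W^s((p_1,0,p_1),h)$ is 3-dimensional (the $T\mathbb{S}$-eigenvalue $1/2$ disappears from the slice, leaving $3/4,\,1/\sigma_0,\,1/\sigma_1$), so $\dim W^u+\dim W^s=5>4$. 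A transverse heteroclinic intersection is produced by iterating $W^u_{\mathrm{loc}}((q_1,0,q_1),h)$ forward and using the density of the $A^{n_1}$-unstable leaf $\mathcal{F}^{uu}(q_1)$ and the $A^{n_0}$-unstable leaf $\mathcal{F}^u(q_1)$ in $\mathbb{T}^2$ to drop an iterate of the disk inside the local product neighborhood of $(p_1,0,p_1)$, while the cone-field invariance in \eqref{tau1} and Fact \ref{fact2} keep the tangent spaces pinched in a thin cone around $E^{uu}\oplus E^u$, guaranteeing the transversality with $W^s((p_1,0,p_1),h)$. The $\lambda$-lemma then yields $(p_1,0,p_1)\in W^u((p_1,0,p_1),h)\subset\overline{W^u((q_1,0,q_1),f)}$. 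For $(q_2,0,q_2)$, the corresponding slice stable manifold is the 2-dimensional $\mathcal{F}^{ss}_{\mathrm{loc}}(q_2)\times\{0\}\times\mathcal{F}^s_{\mathrm{loc}}(q_2)$ plus iterates; here the dimension sum equals $\dim(\text{slice})=4$, and the same density argument (now placing iterates of the unstable disk inside the product neighborhood of $(q_2,0,q_2)$) delivers a point of $\overline{W^u((q_1,0,q_1),f)}$ on the local stable manifold, hence $(q_2,0,q_2)$ itself in the closure. The analogous closure statements for $W^u((q_2,1,q_2),f)$ follow by running exactly the same argument inside $\mathbb{T}^2\times\{1\}\times\mathbb{T}^2$.

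The hard part is the rigorous control of the iterated unstable disk through the region where the perturbation $P$ distorts the naïve product description; without $P$ one would simply note that $\mathcal{F}^{uu}(q_1)\times\{0\}\times\mathcal{F}^u(q_1)$ is a dense subset of the slice and be done. I would therefore use the cone-field invariance of $\mathscr{C}_{\tau_1}(E^{uu},E^u\oplus T\mathbb{S}\oplus E^s)$ from \eqref{tau1} and the dominated splitting \eqref{fenjiezicong} to show that forward iterates of $W^u_{\mathrm{loc}}((q_1,0,q_1),h)$ stay uniformly transverse to $E^{cs}$ while their tangent spaces converge to $E^{uu}\oplus E^u$. Once this is established, Proposition \ref{claim} (or its slice version) applies to each iterate and produces the heteroclinic intersection points required by the $\lambda$-lemma, completing the closure statement.
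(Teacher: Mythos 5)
Your verification of the two skeleton axioms matches the paper's: the first axiom is the combination of Lemma~\ref{skeleton22} and Proposition~\ref{claim}, and the second follows because $W^u\bigl((q_2,1,q_2),f\bigr)$ lies in the invariant slice $\mathbb{T}^2\times\{1\}\times\mathbb{T}^2$ while $W^s\bigl((q_1,0,q_1),f\bigr)$ lies in $\mathbb{T}^2\times(-1,1)\times\mathbb{T}^2$.

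The closure assertion is where you diverge, and the argument you lay out has a genuine gap that you yourself flag (``the hard part is the rigorous control of the iterated unstable disk through the region where the perturbation $P$ distorts''). Two concrete problems with the plan as stated. First, your appeal to ``Proposition~\ref{claim} (or its slice version)'' is misdirected: that proposition produces transverse intersections with $W^s\bigl((q_1,0,q_1),f\bigr)$ or $W^s\bigl((q_2,1,q_2),f\bigr)$, not with $W^s\bigl((p_1,0,p_1)\bigr)$ or $W^s\bigl((q_2,0,q_2)\bigr)$, so it does not supply the heteroclinic points the $\lambda$-lemma needs; you would have to state and prove a separate version aimed at those fixed points, and for $(q_2,0,q_2)$, where $\dim W^u+\dim W^s=4=\dim(\text{slice})$, producing the codimension-two intersection requires more than density. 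Second, the cone you invoke, $\mathscr{C}_{\tau_1}(E^{uu},E^u\oplus T\mathbb{S}\oplus E^s)$, is a cone around the one-dimensional $E^{uu}$; no $2$-plane can lie inside a thin cone around a line, so it cannot control the full tangent plane of the iterated $2$-disk. You would need the coarser invariant cone around $E^{uu}\oplus E^u$ (which exists by Lemma~\ref{mutually} and \eqref{fenjiezicong}) but that is a different cone from the one you cite.

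The paper sidesteps all of this with a structural observation you do not use: since $\widetilde{ID}$ fixes the $(a,b,e)$-coordinates and $A^{n_0}$ preserves $\mathcal{F}^u$, the map $f$ permutes the family $\bigl\{\{x\}\times\{y\}\times\mathcal{F}^{u}(z)\bigr\}$. Combined with the local product form near $(q_1,0,q_1)$, this shows $W^u\bigl((q_1,0,q_1),f\bigr)$ contains the whole tube $\mathcal{F}^{uu}_{\delta/2}(q_1)\times\{0\}\times\mathcal{F}^u(q_1)$, and pushing forward by $f^n$ yields, for each $\widetilde{\widetilde{x}}\in\mathcal{F}^{uu}(q_1)$, a full $\mathcal{F}^u$-leaf $\{\widetilde{\widetilde{x}}\}\times\{0\}\times\mathcal{F}^u(\widetilde{\widetilde{z}})$. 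Density of $\mathcal{F}^{uu}(q_1)$ and of every $\mathcal{F}^u$-leaf in $\mathbb{T}^2$ then gives $\overline{W^u\bigl((q_1,0,q_1),f\bigr)}=\mathbb{T}^2\times\{0\}\times\mathbb{T}^2$ directly, which contains both target points without any $\lambda$-lemma, transversality dimension counts, or cone control of iterated tangent planes. You should look for this fiber-permutation property rather than continue down the cone route.
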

\begin{proof}
Owing to the product form of \(f\) on the relevant region, it follows that
\[
 W^{s}\bigl((q_1,0,q_1), f\bigr)= \bigcup_{j \in \mathbb{Z}} f^j \bigl( \mathcal{F}^{ss}_{\delta/2}(q_1) \times (-1,1) \times \mathcal{F}^{s}(q_1) \bigr),  \]\[ W^{s}\bigl((q_2,1,q_2), f\bigr)= \bigcup_{j \in \mathbb{Z}} f^j \bigl( \mathcal{F}^{ss}_{\delta/2}(q_2) \times (0,2) \times \mathcal{F}^{s}(q_2) \bigr)   
\]
and
\[
W^{u}\bigl((q_i, i-1, q_i), f\bigr) = \bigcup_{j \in \mathbb{Z}} f^j \bigl( \mathcal{F}^{uu}_{\frac{\delta}{2}}(q_i) \times \{i-1\} \times \mathcal{F}^{u}(q_i) \bigr),\quad \text{for each $i\in\{1,2\}$}.
\]
Since \(f\) preserves \(\mathbb{T}^2 \times \{j\} \times \mathbb{T}^2\)  and
for each $j\in\{0,1\}$,  it follows that \(W^s((q_1,0,q_1), f)\) and \(W^u((q_2,1,q_2), f)\) are disjoint; hence, they clearly have no transverse intersections. Finally, combining Proposition~\ref{claim} and Lemma~\ref{skeleton22}, we conclude that \( S \) satisfies the first property of a skeleton.

For any point \( (\widetilde{x}, 0, \widetilde{z}) \), there exists a point
\( \widetilde{\widetilde{x}} \in \mathcal{F}^{uu}(q_1) \) that can be chosen
arbitrarily close to \( \widetilde{x} \).
For \( n \) sufficiently large, we have
\[
(A^{n_1})^{-n}(\widetilde{\widetilde{x}})
\in \mathcal{F}^{uu}_{\delta/2}(q_1).
\]
It then follows that
\[
\{(A^{n_1})^{-n}(\widetilde{\widetilde{x}})\}
\times \{0\} \times \mathcal{F}^{u}(q_i)
\subset W^{u}\bigl((q_1,0,q_1), f\bigr).
\]

Since \( f \) permutes the family
\[
\bigl\{
\{x\} \times \{y\} \times \mathcal{F}^{u}(z)
:\;
x \in \mathbb{T}^2,\;
y \in \mathbb{S},\;
z \in \mathbb{T}^2
\bigr\},
\]
it follows that
\[
f^{n}\bigl(
\{(A^{n_1})^{-n}(\widetilde{\widetilde{x}})\}
\times \{0\} \times \mathcal{F}^{u}(q_i)
\bigr)
\]
is of the form
\[
\{\widetilde{\widetilde{x}}\}
\times \{0\}
\times \mathcal{F}^{u}(\widetilde{\widetilde{z}})
\quad \text{for some }
\widetilde{\widetilde{z}} \in \mathbb{T}^2.
\]
Consequently, the set
\[
\{\widetilde{\widetilde{x}}\}
\times \{0\}
\times \mathcal{F}^{u}(\widetilde{\widetilde{z}})
\]
can be chosen arbitrarily close to the point
\( (\widetilde{x}, 0, \widetilde{z}) \).
By the invariance of
\( W^{u}\bigl((q_1,0,q_1), f\bigr) \),
we conclude that its closure coincides with
\[
\mathbb{T}^2 \times \{0\} \times \mathbb{T}^2,
\]
which in particular contains the point
\( (p_1,0,p_1) \) and \( (q_2,0,q_2) \).
The corresponding property for
\( W^{u}\bigl((q_2,1,q_2), f\bigr) \)
can be proved in an analogous manner.

\end{proof}

\section{Behavior of Skeletons  under Perturbations}\label{six}

\begin{thm}\label{reduction}
For any $C^1$-neighborhood $\U_f$ of $f$, there exists $C^\infty$ diffeomorphism $\widetilde{f}$ such that
skeleton of $\widetilde{f}$ is $\{(q_2,1,q_2)\}$.
\end{thm}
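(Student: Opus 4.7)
The strategy is to construct $\widetilde{f}$ by a localized $C^{\infty}$ bump perturbation of $f$ supported in a small ball disjoint from the four special points $p_1,p_2,q_1,q_2$, engineered so as to force a transverse heteroclinic intersection
\[
W^{u}\bigl((q_1,0,q_1)(\widetilde f),\widetilde f\bigr)\ \pitchfork\ W^{s}\bigl((q_2,1,q_2)(\widetilde f),\widetilde f\bigr)\neq\emptyset.
\]
By Lemma~\ref{change}, the continuation $S(\widetilde f)=\{(q_1,0,q_1)(\widetilde f),(q_2,1,q_2)(\widetilde f)\}$ is then not a skeleton for $\widetilde f$, and the cardinality of the actual skeleton is at most $|S(\widetilde f)|=2$. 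To conclude, it suffices to exhibit $\{(q_2,1,q_2)(\widetilde f)\}$ directly as a skeleton; by Lemma~\ref{change} this will force the number of physical measures to collapse to one.

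For the perturbation site, Theorem~\ref{skeleton} shows $(q_2,0,q_2)\in\overline{W^{u}((q_1,0,q_1),f)}$, so one can pick a non-periodic $x^{*}\in W^{u}((q_1,0,q_1),f)$ arbitrarily close to $(q_2,0,q_2)$ whose entire $f$-orbit lies in $\mathbb{T}^{2}\times\{0\}\times\mathbb{T}^{2}$ and avoids fixed small neighborhoods of $p_i,q_i$. On the other hand, the local piece $\mathcal{F}^{ss}_{\delta/2}(q_2)\times(0,2)\times\mathcal{F}^{s}(q_2)\subset W^{s}((q_2,1,q_2),f)$, combined with the denseness of the Anosov leaves $\mathcal{F}^{ss}(q_2),\mathcal{F}^{s}(q_2)$ in $\mathbb{T}^{2}$, implies that $W^{s}((q_2,1,q_2),f)$ accumulates onto every point of the form $(\xi,0,\zeta)\in\mathbb{T}^{2}\times\{0\}\times\mathbb{T}^{2}$ from the side $y>0$. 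In particular one can pick $y^{*}\in W^{s}((q_2,1,q_2),f)$ arbitrarily close to $f(x^{*})$.

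Now take a $C^{\infty}$ bump function $\eta$ supported in a small ball $V\ni x^{*}$ chosen disjoint from $\{f^{n}(x^{*}):n\neq 0\}$, from $\{f^{n}(y^{*}):n>0\}$, and from the fixed neighborhoods of $p_i,q_i$ used in Section~\ref{three}, and set
\[
\widetilde f(x)=\exp_{f(x)}\!\bigl(\eta(x)\cdot\exp^{-1}_{f(x^{*})}(y^{*})\bigr),
\]
so that $\widetilde f(x^{*})=y^{*}$ and $\widetilde f\equiv f$ outside $V$. For $|y^{*}-f(x^{*})|$ sufficiently small compared with $\mathrm{diam}(V)$, one has $\widetilde f\in\mathcal{U}_{f}$. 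The disjointness choices guarantee $(q_i,j,q_i)(\widetilde f)=(q_i,j,q_i)$ and $\widetilde f^{-n}(x^{*})=f^{-n}(x^{*})\to(q_1,0,q_1)$, $\widetilde f^{n}(y^{*})=f^{n}(y^{*})\to(q_2,1,q_2)$ for $n\geq 1$. Consequently $x^{*}\in W^{u}((q_1,0,q_1)(\widetilde f))\cap W^{s}((q_2,1,q_2)(\widetilde f))$. Transversality at $x^{*}$ is automatic from the partially hyperbolic splitting: $T_{x^{*}}W^{u}\subset\widetilde{E^{uu}}\oplus E^{u}$ is $2$-dimensional, $T_{x^{*}}W^{s}\subset T\mathbb{S}\oplus\widetilde{E^{s}}\oplus\widetilde{E^{ss}}$ is $3$-dimensional, and these are complementary subspaces of the $5$-dimensional $T_{x^{*}}M$.

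It remains to check that $\{(q_2,1,q_2)(\widetilde f)\}$ is a skeleton. Since the perturbation is supported away from the neighborhoods used in the proofs of Proposition~\ref{claim} and Lemma~\ref{skeleton22}, and since the dominated splittings~\eqref{fenjiezicong},~\eqref{dazicong} are $C^{1}$-robust, both statements carry over verbatim to $\widetilde f$: every $C^{1}$-disk transverse to $E^{cs}$ meets $W^{s}((q_1,0,q_1)(\widetilde f))\cup W^{s}((q_2,1,q_2)(\widetilde f))$ transversely. The transverse heteroclinic intersection just produced, together with the inclination ($\lambda$) lemma for hyperbolic periodic points, yields $W^{s}((q_1,0,q_1)(\widetilde f))\subset\overline{W^{s}((q_2,1,q_2)(\widetilde f))}$ in the $C^{1}$-topology of $3$-submanifolds; hence any disk transversely meeting the former also transversely meets the latter at a nearby point. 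This verifies the skeleton property. The main obstacle in the plan is this last step: one must pin down that the reasoning of Proposition~\ref{claim} and Lemma~\ref{skeleton22} is genuinely local, so that the $C^{0}$-perturbation of the relevant subbundles and foliations does not spoil the dimension-counting and transversality arguments; a careful inspection shows all the key manipulations take place inside the product neighborhoods of the four special points, which $\widetilde f$ leaves pointwise unchanged by construction.
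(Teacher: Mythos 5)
Your overall strategy — perturb locally to produce a transverse heteroclinic intersection $W^u\bigl((q_1,0,q_1)\bigr)\pitchfork W^s\bigl((q_2,1,q_2)\bigr)$, then invoke the Inclination Lemma together with Proposition~\ref{claim} and Lemma~\ref{skeleton22} to show $\{(q_2,1,q_2)\}$ is a skeleton — is the same as the paper's, and the final Inclination Lemma step is handled correctly. However, the implementation via a generic exponential-map bump near $(q_2,0,q_2)$ has genuine gaps.

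First, the description of $x^{*}$ is internally inconsistent. You require $x^{*}$ to be ``arbitrarily close to $(q_2,0,q_2)$'' and, simultaneously, its entire orbit to ``avoid fixed small neighborhoods of $p_i,q_i$.'' But $x^{*}$ then lies inside the neighborhood of $q_2$, and since $x^{*}\in W^u\bigl((q_1,0,q_1),f\bigr)$, the backward orbit converges to $(q_1,0,q_1)$ and hence eventually enters every neighborhood of $q_1$. What you actually need is the weaker (and different) statement that the support ball $V$ can be chosen disjoint from $\{f^n(x^*):n\neq0\}\cup\{f^n(y^*):n\geq0\}$, which only holds if $x^*$ is chosen to be a bi-asymptotic (heteroclinic) point; this should be stated and justified, not the impossible orbit-avoidance condition.

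Second, and more seriously, placing the perturbation near $(q_2,0,q_2)$ destroys the product form of $f$ on $B_{\delta/2}(q_2,\mathbb{T}^2)\times(0,2)\times\mathbb{T}^2$, which is exactly the hypothesis the paper's proof of Proposition~\ref{claim} relies on (the forward iteration of the disk $D_0$ is carried out \emph{inside} that product region, and the identification of $W^s\bigl((q_2,1,q_2)\bigr)$ with $\bigcup_j f^j(\mathcal{F}^{ss}_{\delta/2}(q_2)\times(0,2)\times\mathcal{F}^s(q_2))$ uses the product form globally along that invariant set). Your claim that ``both statements carry over verbatim'' to $\widetilde f$ because ``the perturbation is supported away from the neighborhoods used in the proofs'' is therefore false for your choice of site: the ball $V$, centred at a point with $y$-coordinate $0$, intersects $B_{\delta/2}(q_2,\mathbb{T}^2)\times(0,2)\times\mathbb{T}^2$ in the region of small positive $y$. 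The paper avoids precisely this problem by choosing the perturbation site $r\in\mathcal{F}^{uu}(q_1)\cap\mathcal{F}^{ss}(q_2)$ in the base torus with $B(r,\epsilon)$ disjoint from the $\delta$-balls around all four points $p_i,q_i$, and by using a perturbation $H_\epsilon$ that modifies only the $\mathbb{S}$-coordinate and still permutes the fibers $\{x\}\times\{y\}\times\mathcal{F}^u(z)$, so that the product form near $q_1,q_2$ is untouched and the continuation of the skeleton is manifestly unchanged.

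Third, the assertion that $W^s\bigl((q_2,1,q_2),f\bigr)$ ``accumulates onto every point of the form $(\xi,0,\zeta)$ from the side $y>0$'' is plausible but not a direct consequence of the density of the leaves $\mathcal{F}^{ss}(q_2)$ and $\mathcal{F}^{s}(q_2)$: under backward iteration the $\mathbb{S}$-coordinate is pushed away from $0$ whenever the base trajectory passes near $q_1$ or $p_1$, so one must first show that the initial $y_0\in(0,2)$ can be pre-compensated. This deserves an argument; the paper sidesteps it entirely because the perturbed map by design sends the point $(r,0)$ directly onto $(r,1/\ell^3)$ in the $\mathbb{S}$-coordinate, which lands it inside the explicitly known local stable manifold of $(q_2,1,q_2)$.
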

\begin{proof}
By the definition of the \(C^{1}\)-topology and the density of
\(\mathcal{F}^{uu}(q_1) \cap \mathcal{F}^{ss}(q_2)\) in \(\mathbb{T}^2\), there exists a sufficiently small \(\epsilon > 0\) such that the following properties hold.
\begin{itemize}
\item The \(C^{1}\)-neighborhood
\[
\V_f=\Bigl\{ \breve{f} :
\max_{x \in M} d\bigl(f(x), \breve{f}(x)\bigr) \le \epsilon,\
\max_{x \in M} \lVert D\breve{f}(x) - Df(x) \rVert \le \epsilon,\
\breve{f} \text{ is a diffeomorphism}
\Bigr\}
\]
is contained in \(\mathcal{U}_f\).
\item For every \(\breve{f} \in \mathcal{V}_f \), the diffeomorphism \( \breve{f} \) has a partially hyperbolic splitting of the same type as \( f \).
\item There exists a point \( r \in \mathcal{F}^{uu}(q_1) \cap \mathcal{F}^{ss}(q_2) \) such that
\[
\{ (A^{n_1})^{i}(r) : i \in \mathbb{Z} \setminus {0} \} \cap B_\epsilon(r) = \varnothing,
\]
and
\[
f\big|_{B(r,\epsilon) \times \mathbb{S} \times \mathbb{T}^2}
= A^{n_1} \times \mathrm{id} \times A^{n_0}.
\]
\end{itemize}

Choose \(\ell\) sufficiently large so that
\begin{equation}\label{definediffeo}
\frac{3\delta}{2\ell} \le \frac{\epsilon}{10000}
\quad \text{and} \quad
\max\{\frac{\sqrt{2}}{\ell^{2}}\cdot\max\{\psi'\}\cdot\max\{\psi\},\frac{1}{\ell^3}\} \le \frac{\epsilon}{10000}.
\end{equation}
We then define \(H_{\epsilon}\) by
\[
H_\epsilon : \mathbb{R}^{uu} \times \mathbb{R}^{ss} \times \mathbb{R} \times \mathbb{R}^{u} \times \mathbb{R}^{s}
\longrightarrow
\mathbb{R}^{uu} \times \mathbb{R}^{ss} \times \mathbb{R} \times \mathbb{R}^{u} \times \mathbb{R}^{s}
\]
by
\[
H_\epsilon(a,b,c,d,e) = \bigl(a, b, R(a,b,c), d, e\bigr),
\]
where\[
\begin{aligned}
R(a, b, c) &= \psi(\ell c) \psi\left( \ell \sqrt{a^2 + b^2} \right) \left( c + \frac{1}{\ell^3} \right) + \left( 1 - \psi(\ell c) \psi\left( \ell \sqrt{a^2 + b^2} \right) \right) c \\
&= c + \psi(\ell c) \psi\left( \ell \sqrt{a^2 + b^2} \right) \cdot \frac{1}{\ell^3}.
\end{aligned}
\]

We define \( \widetilde{f} \) in the following way, so that its lift \( \widehat{\widetilde{f}} \) satisfies
\[
\widehat{\widetilde{f}} := \hat{f} \circ H_\epsilon
\quad \text{on} \quad
\hat{\mathcal{F}}^{uu}_{\epsilon/10}(\hat{r})
\times \hat{\mathcal{F}}^{ss}_{\epsilon/10}(\hat{r})
\times \mathbb{R}
\times \hat{\mathcal{F}}^{u}_{\epsilon/10}(\hat{r})
\times \hat{\mathcal{F}}^{s}_{\epsilon/10}(\hat{r}).
\]
On the complement of this set, \( \widehat{\widetilde{f}} \) coincides with \( \hat{f} \).

After computation, we obtain:
\[
\frac{\partial R(a, b, c)}{\partial c} = 1 + \frac{1}{\ell^2} \psi'(\ell c) \cdot \psi\left( \ell \sqrt{a^2 + b^2} \right),
\]
\[
\frac{\partial R(a, b, c)}{\partial a} = \frac{1}{\ell^2} \psi(\ell c) \cdot \psi'\left( \ell \sqrt{a^2 + b^2} \right) \cdot \frac{a}{\sqrt{a^2 + b^2}},
\]
\[
\frac{\partial R(a, b, c)}{\partial b} = \frac{1}{\ell^2} \psi(\ell c) \cdot \psi'\left( \ell \sqrt{a^2 + b^2} \right) \cdot \frac{b}{\sqrt{a^2 + b^2}},
\]
and
\[
\lVert D\widetilde{f}(x) - Df(x) \rVert = \renewcommand{\arraystretch}{2}
\begin{pmatrix}
0 & 0 & 0 & 0 & 0 \\
0 & 0 & 0 & 0 & 0 \\
\frac{\partial R}{\partial a} & \frac{\partial R}{\partial b} & \frac{\partial R}{\partial a}-1 & 0 & 0 \\
0 & 0 & 0 & 0 & 0 \\
0 & 0 & 0 & 0 & 0
\end{pmatrix}.
\]
By the Cauchy inequality, we can conclude that:
\[
 \lVert D\widetilde{f}(x) - Df(x) \rVert = \frac{\sqrt{2}}{\ell^2} |\psi(\ell c) \cdot \psi'\left( \ell \sqrt{a^2 + b^2} \right)|
\]
Thus, by our setup in inequality~\eqref{definediffeo}, it follows that \( \widetilde{f} \in \mathcal{V}_f \).
It is straightforward to check that the following holds.
\begin{equation}\label{r1}
\text{The map } \widetilde{f} \text{ still permutes the family }
\bigl\{ \{x\} \times \{y\} \times \mathcal{F}^{u}(z) : x \in \mathbb{T}^2,\, y \in \mathbb{S},\, z \in \mathbb{T}^2 \bigr\}.
\end{equation}
\begin{equation}\label{r2}
\text{The point } (r,0) \text{ belongs to the intersection } 
W^s((q_2,1), \widetilde{g}) \cap W^u((q_1,0), \widetilde{g}),
\end{equation}
where \(\widetilde{g}\) satisfies
\[
 \pi^3_2 \circ \widetilde{f} = \widetilde{g} \circ \pi^3_2.
\]
Hence, we can choose \(\widetilde{\kappa}\) sufficiently large so that
\[
\widetilde{g}^{-\widetilde{\kappa}}(r,0) \in \mathcal{F}^{uu}_{\frac{\delta}{2}}((q_1,0)).
\]
Denote this point by \((r_{-\widetilde{\kappa}},0)\). Since \(\widetilde{f}\) still has the product form on \(B_{\frac{\delta}{2}}(p_1)\times(-1,1)\times\mathbb{T}^2\), it follows that
\[
\{r_{-\widetilde{\kappa}}\} \times \{0\} \times \mathcal{F}^u(q_1) \subset W^u((q_1,0,q_1),\widetilde{f}).
\]
Combining the previous observations~\eqref{r1}\eqref{r2}, there exists a sufficiently large \(n > 0\) and points \(\widetilde{p_1}, \widetilde{1}, \widetilde{w}\) such that
\[
\widetilde{f}^n \bigl( \{ r_{-\widetilde{\kappa}} \} \times \{0\} \times \mathcal{F}^u(q_1) \bigr) 
= \{ \widetilde{p_1} \} \times \{ \widetilde{1} \} \times \mathcal{F}^u(\widetilde{w}),
\]
with \(( \widetilde{p_1}, \widetilde{1}) \in \mathcal{F}^{ss}_{\frac{\delta}{2}}(q_2) \times (0,2)\). 
\begin{equation}
	\includegraphics[width=1.1\textwidth]{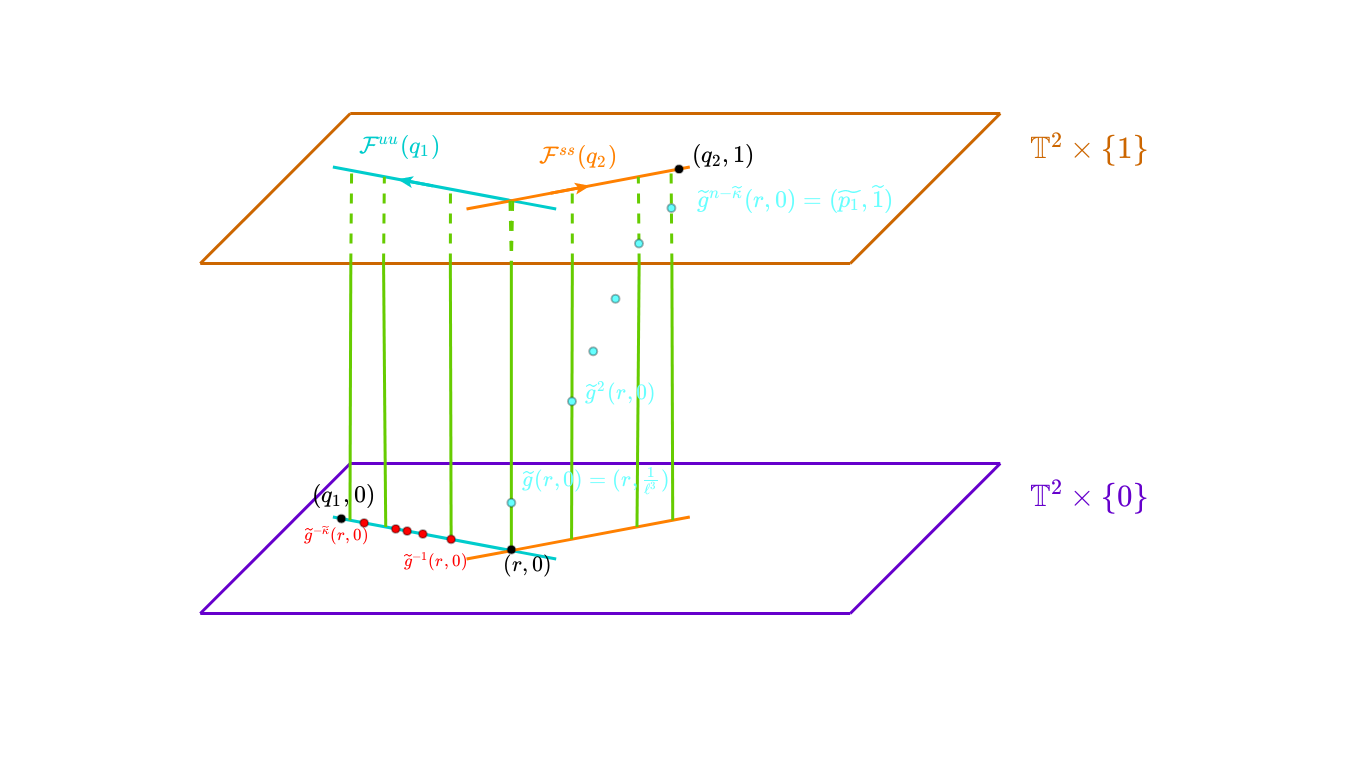}
\end{equation}
Since \(\widetilde{f}\) has the product form on \(\mathcal{F}^{ss}(q_2) \times (0,2) \times \mathbb{T}^2\), we have
\[
\mathcal{F}^{ss}(q_2) \times (0,2) \times \mathcal{F}^s(q_2) \subset W^s((q_2,1,q_2), \widetilde{f}).
\]
It then follows immediately that
\[
\bigl( \mathcal{F}^{ss}(q_2) \times (0,2) \times \mathcal{F}^s(q_2) \bigr) \cap
\bigl( \{ \widetilde{p_1} \} \times \{ \widetilde{1} \} \times \mathcal{F}^u(\widetilde{w}) \bigr) \neq \varnothing.
\]
It then follows that
 \[ 
 W^s((q_2,1,q_2), \widetilde{f}) \quad \text{and} \quad W^u((q_1,0,q_1), \widetilde{f}) 
 \] 
 intersect transversely at points in the set 
 \[ 
 \{\widetilde{p_1}\} \times \{\widetilde{1}\} \times \bigl( \mathcal{F}^u(\widetilde{w}) \cap \mathcal{F}^s(q_2) \bigr). 
 \]

The continuation \( S(\widetilde{f}) \) of the skeleton
\[
 S = \{(q_1,0,q_1), (q_2,1,q_2)\}
 \]
 remains
 \(\{(q_1,0,q_1), (q_2,1,q_2)\}\). We have \( \widetilde{f} \) on
\[
 B_{\delta/2}(q_1,\mathbb{T}^2)\times (-1,1) \times \mathbb{T}^2 \quad \text{and} \quad B_{\delta/2}(q_2,\mathbb{T}^2)\times (0,2) \times \mathbb{T}^2
\]
still has the same product form as \( f \).
Combining Proposition~\ref{claim} and Lemma~\ref{skeleton22}, we see that \( S(\widetilde{f}) \) satisfies the first property in the definition of a skeleton.
Moreover, since
\( W^s((q_2,1,q_2), \widetilde{f}) \) and \( W^u((q_1,0,q_1), \widetilde{f}) \) intersect transversely, the Inclination Lemma implies that any disk transverse to
 \( W^s((q_1,0,q_1), \widetilde{f}) \) must also intersect
 \( W^s((q_2,1,q_2), \widetilde{f}) \) transversely. Therefore,
 \(\{(q_2,1,q_2)\}\) is a skeleton of \( \widetilde{f} \).
\end{proof}

 Since Theorem~\ref{reduction} and the following Lemma~\ref{bubianshumu} are independent results, for simplicity, there are some repeated notations in the statement.

Denote by \( \mathbf{q_1} := (q_1, 0, q_1) \) and \( \mathbf{q_2} := (q_2, 1, q_2) \). Then, \( \mathbf{q_i}(\tilde{f}) \) is the continuation of \( \mathbf{q_i} \) under \( \tilde{f} \) for each \( i = 1, 2 \).

\begin{lem}\label{bubianshumu}
Let
\[
\mathcal{V} := \left\{ \breve{f} :  
\breve{f}(\mathbb{T}^2 \times \{i\} \times \mathbb{T}^2) = \mathbb{T}^2 \times \{i\} \times \mathbb{T}^2, \ i = 0, 1, \ \breve{f} \text{ is a diffeomorphism} \right\},
\]
then there exists a \( C^1 \)-neighborhood \( \mathcal{U}_f \) of \( f \) such that for any \( \tilde{f} \in \mathcal{U}_f \cap \mathcal{V}_f \),
\[
\overline{W^u(\mathbf{q_1}(\tilde{f}), \tilde{f})} = \mathbb{T}^2 \times \{0\} \times \mathbb{T}^2, \quad 
\overline{W^u(\mathbf{q_2}(\tilde{f}), \tilde{f})} = \mathbb{T}^2 \times \{1\} \times \mathbb{T}^2.
\]
\end{lem}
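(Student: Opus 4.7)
The inclusion $\overline{W^u(\mathbf{q_1}(\tilde f), \tilde f)} \subseteq \mathbb{T}^2 \times \{0\} \times \mathbb{T}^2$ is immediate from $\tilde f \in \mathcal{V}$: the slice $\mathbb{T}^2 \times \{0\} \times \mathbb{T}^2$ is $\tilde f$-invariant, closed, and contains $\mathbf{q_1}(\tilde f)$, so it contains every forward iterate of $W^u_{\rm loc}(\mathbf{q_1}(\tilde f))$ and hence the full closure of $W^u(\mathbf{q_1}(\tilde f), \tilde f)$. The same reasoning handles $\mathbf{q_2}(\tilde f)$ and the slice $\mathbb{T}^2 \times \{1\} \times \mathbb{T}^2$. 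For the reverse inclusions my plan is to robustify the density argument from the proof of Theorem~\ref{skeleton}.

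I would shrink $\mathcal{U}_f$ so that, for every $\tilde f \in \mathcal{U}_f \cap \mathcal{V}$: (i) the continuation $\mathbf{q_1}(\tilde f)$ is a hyperbolic fixed point with $W^u_{\rm loc}(\mathbf{q_1}(\tilde f))$ $C^1$-close to $\mathcal{F}^{uu}_{\delta/2}(q_1) \times \{0\} \times \mathcal{F}^u_{\delta/2}(q_1)$; (ii) $\tilde f$ still admits the partially hyperbolic splitting described in Section~\ref{five}, so that $\widetilde{E^{uu}}_{\tilde f}$ integrates to an $\tilde f$-invariant foliation $\mathscr{F}^{uu}_{\tilde f}$ whose compact leaf pieces are $C^1$-close to those of $\mathscr{F}^{uu}_f$; and (iii) the transverse heteroclinic intersections inside the slice between $W^u(\mathbf{q_1}, f)$ and the stable manifolds of the other relevant hyperbolic periodic points persist.

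With these in place the density of $W^u(\mathbf{q_1}(\tilde f), \tilde f)$ in the slice is obtained in two stages. First, density in the first $\mathbb{T}^2$ factor: since $\mathscr{F}^{uu}_{\tilde f}$ has leaves $C^1$-close to those of $\mathscr{F}^{uu}_f$, which locally project via $\pi^3_1$ onto leaves of $\mathcal{F}^{uu}$, and since $W^u(\mathbf{q_1}(\tilde f))$ contains $\mathscr{F}^{uu}_{\tilde f}(\mathbf{q_1}(\tilde f))$ together with all its forward $\tilde f$-iterates, the first-coordinate projection of $W^u(\mathbf{q_1}(\tilde f))$ contains arbitrarily long strong-unstable arcs $C^1$-close to long $\mathcal{F}^{uu}$-arcs, hence is dense in $\mathbb{T}^2$. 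Second, density in the third $\mathbb{T}^2$ factor: inside the 4-dimensional slice, the 2-dimensional $W^u(\mathbf{q_1}, f)$ meets $W^s((q_2, 0, q_2), f)$ (dimensions $2+2=4$) and $W^s((p_1, 0, p_1), f)$ (dimensions $2+3 \geq 4$) transversely, as follows from the explicit local product structure combined with the density statement of Theorem~\ref{skeleton}. These transverse intersections persist by (iii), and the Inclination Lemma then forces $\overline{W^u(\mathbf{q_1}(\tilde f))}$ to contain the closures of the unstable manifolds of $(p_1,0,p_1)(\tilde f)$ and $(q_2,0,q_2)(\tilde f)$. Combined with the first-factor density this exhausts the slice.

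The principal obstacle will be the density in the third $\mathbb{T}^2$ factor. The argument of Theorem~\ref{skeleton} relied on $f$ permuting the family $\{\{x\}\times\{y\}\times \mathcal{F}^u(z)\}$, which allowed an explicit transport of the 1-dimensional curves $\{x\}\times\{0\}\times \mathcal{F}^u(q_1) \subset W^u(\mathbf{q_1},f)$ onto curves of the form $\{\tilde{\tilde x}\}\times\{0\}\times \mathcal{F}^u(\tilde{\tilde z})$, whose density in the fiber was inherited from the Anosov minimality of $\mathcal{F}^u$. Since $\tilde f$ need not preserve this foliated family, this direct combinatorial argument is unavailable and must be replaced by the $C^1$-robust machinery of transverse heteroclinic intersections together with the $\lambda$-Lemma. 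The technical heart of the argument lies in verifying that the unperturbed $f$ carries enough transverse heteroclinic intersections inside the slice to drive the density, and that the shrunk neighborhood $\mathcal{U}_f$ preserves all of them simultaneously.
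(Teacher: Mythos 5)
Your forward inclusion is correct and matches the paper. For the reverse inclusion, however, the paper takes a completely different route: it observes that $f$ restricted to the invariant slice $\mathbb{T}^2\times\{0\}\times\mathbb{T}^2$ coincides with the Anosov product $A^{n_1}\times A^{n_0}$ on $B_{\delta/2}(q_1,\mathbb{T}^2)\times\{0\}\times\mathbb{T}^2$, so the slice dynamics is a Shub--Ma\~n\'e type derived-from-Anosov diffeomorphism of $\mathbb{T}^4$, and then cites \cite[Proposition~7.2]{CLM} to conclude directly that $W^u(\mathbf{q_1}(\tilde f),\tilde f)$ is robustly dense in the slice. This one-line reduction sidesteps all the technical difficulties you wrestle with.

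Your alternative is a genuinely different route, and you correctly diagnose why the density argument of Theorem~\ref{skeleton} cannot simply be robustified: it relies on $f$ permuting the family $\{\{x\}\times\{y\}\times\mathcal F^u(z)\}$, which is not $C^1$-stable. But the replacement you sketch has a real gap in the last step. You deduce (via persistence of transverse heteroclinics and the Inclination Lemma) that $\overline{W^u(\mathbf{q_1}(\tilde f))}$ contains the closures of the unstable manifolds of $(p_1,0,p_1)(\tilde f)$ and $(q_2,0,q_2)(\tilde f)$ inside the slice, and then assert that together with first-factor density ``this exhausts the slice.'' That inference is not justified. Inside the 4-dimensional slice, $W^u((p_1,0,p_1)(\tilde f))$ is 1-dimensional and $W^u((q_2,0,q_2)(\tilde f))$ is only 2-dimensional (the $T\mathbb{S}$ unstable direction at $(q_2,0,q_2)$ points out of the slice), so accumulating on their closures yields at best a union of low-dimensional sets unless you already know one of them is dense in the slice---which is the same type of statement you are trying to prove, and would have to be established independently. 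Density of $\pi^3_1(W^u(\mathbf{q_1}(\tilde f)))$ in $\mathbb{T}^2$ controls only the first factor and says nothing about the $z$-fiber. To close the argument along your lines you would need something substantive about the slice dynamics---e.g.\ minimality of its strong stable foliation and a local product structure at $\mathbf{q_1}(\tilde f)$ surviving the perturbation---which is essentially the content of the cited BDV proposition. As written, the ``technical heart'' you flag at the end is exactly where the proof is missing.
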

\begin{proof}
Since \[
f(x, 0, y) = (A^{n_1}(x), 0, A^{n_0}(y)) \quad \text{when} \quad (x, 0, y) \in B_{\delta/2}(q_1, \mathbb{T}^2) \times \{0\} \times \mathbb{T}^2,
\]
 it follows from the proof of \cite[Proposition~7.2]{CLM} that, up to shrinking \( \mathcal{U}_f \), \( W^u(\mathbf{q_1}(\tilde{f}), \tilde{f}) \) is dense in \( \mathbb{T}^2 \times \{0\} \times \mathbb{T}^2 \). A similar argument applies for \( \mathbf{q_2}(\tilde{f}) \).
\end{proof}



\section{Mostly Contracting Sub-Centers}\label{seven}

The purpose of this section is to establish the following theorem.

\begin{thm}\label{mostlycontracting}
The subbundle $\widetilde{T\mathbb{S}}$ is mostly contracting.
\end{thm}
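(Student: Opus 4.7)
The plan is to push every Gibbs $u$-state of $f$ down to a Gibbs $u$-state of $g$ via the factor map $\pi^3_2$ (recall $\pi^3_2 \circ f = g \circ \pi^3_2$), and then to conclude by invoking the already-established fact that $T\mathbb{S}$ is mostly contracting for $g$ (Lemma~\ref{mostcon}). By Proposition~\ref{sat}, it suffices to handle an arbitrary ergodic Gibbs $u$-state $\mu$ of $f$; I set $\nu := (\pi^3_2)_\ast \mu$. The argument then reduces to two claims: (i) $\nu$ is itself a Gibbs $u$-state of $g$, and (ii) the Lyapunov exponent of $\mu$ along $\widetilde{T\mathbb{S}}$ coincides with the Lyapunov exponent of $\nu$ along $T\mathbb{S}$.

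For (i), I will first observe that $d\pi^3_2$ annihilates $E^u \oplus E^s$ (the directions tangent to the collapsed fiber) and restricts to the identity on $E^{uu} \oplus T\mathbb{S} \oplus E^{ss}$. Since $\widetilde{E^{uu}}(f) \subset \mathscr{C}_{\tau_1}(E^{uu}, E^u \oplus T\mathbb{S} \oplus E^s)$ by~\eqref{tau1}, its image $d\pi^3_2(\widetilde{E^{uu}}(f))$ is a one-dimensional $Dg$-invariant subbundle lying inside the cone $\mathscr{C}_{\tau_1}(E^{uu}, T\mathbb{S})$. The uniqueness of an invariant subbundle inside a forward-invariant cone (implicit in Lemma~\ref{cone} together with Lemma~\ref{zhibiao}) will force it to coincide with $\widetilde{E^{uu}}(g)$. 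Consequently $\pi^3_2$ will restrict to a local diffeomorphism from each $\mathscr{F}^{uu}(f)$-leaf onto an $\mathscr{F}^{uu}(g)$-leaf, after which a Rokhlin-type disintegration argument will show that the absolutely continuous $\mu$-conditionals push forward to absolutely continuous $\nu$-conditionals, so $\nu$ is a Gibbs $u$-state of $g$.

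For (ii), I will use the cone inclusion $\widetilde{T\mathbb{S}} \subset \mathscr{C}_{\tau_3}(T\mathbb{S}, E^u)$ coming from the construction leading to~\eqref{lianggeES}, combined with the $Df$-invariance of $\widetilde{T\mathbb{S}}$. Any $v \in \widetilde{T\mathbb{S}}(x)$ decomposes as $v = v_\mathbb{S} + v_u$ with $v_\mathbb{S} \in T\mathbb{S}$, $v_u \in E^u$, and $\|v_u\| \le \tau_3 \|v_\mathbb{S}\|$; the same control persists for every iterate $Df^n(v)$. Since $d\pi^3_2$ kills $E^u$ and fixes $T\mathbb{S}$, the intertwining $d\pi^3_2 \circ Df = Dg \circ d\pi^3_2$ yields $(Df^n(v))_\mathbb{S} = d\pi^3_2(Df^n(v)) = Dg^n(v_\mathbb{S})$, so $\|Df^n(v)\|$ and $\|Dg^n(v_\mathbb{S})\|$ differ by at most the uniform factor $\sqrt{1 + \tau_3^2}$. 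Passing to $\tfrac{1}{n}\log$ as $n \to \infty$ will equate the two Lyapunov exponents, and combining with (i) and Lemma~\ref{mostcon} will conclude the proof.

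The main obstacle I expect lies in the disintegration bookkeeping of step (i): because the fiber of $\pi^3_2$ over a point of $\mathbb{T}^2 \times \mathbb{S}$ is a two-dimensional torus, distinct $\mathscr{F}^{uu}(f)$-leaves can project onto the same $\mathscr{F}^{uu}(g)$-leaf, so one cannot simply transport the disintegration leaf-by-leaf. I plan to describe the conditional of $\nu$ on a $g$-plaque as an integral, over the fiber of $\pi^3_2$ restricted to that plaque, of the pushforwards of the $\mu$-conditionals on the $f$-plaques mapping onto it; each pushforward is absolutely continuous because $\pi^3_2$ is a local diffeomorphism on leaves, and absolute continuity is preserved under such integration.
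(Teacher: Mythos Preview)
Your overall architecture is exactly the paper's: reduce to an ergodic Gibbs $u$-state $\mu$ via Proposition~\ref{sat}, push it to $\nu=(\pi^3_2)_\ast\mu$, establish (i) that $\nu$ is a Gibbs $u$-state of $g$ and (ii) that the exponents along $\widetilde{T\mathbb{S}}$ and $T\mathbb{S}$ agree, then invoke Lemma~\ref{mostcon}. Your argument for (ii) is essentially the paper's Lemma~\ref{pi}, phrased in norms rather than determinants; and your identification $d\pi^3_2(\widetilde{E^{uu}})=\widetilde{E^{uu}_g}$ is the content of the paper's Lemma~\ref{unstable} and Corollary~\ref{diffeomorphism}.

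The genuine divergence is in step (i). You propose a direct Rokhlin disintegration argument: $\pi^3_2$ is a leafwise diffeomorphism, so the pushforward of each $\mu$-conditional is absolutely continuous, and the $\nu$-conditional on a $g$-plaque is the fiber-average of these pushforwards, hence still absolutely continuous. This is correct, and you have correctly located the main technical cost (many $f$-plaques over a single $g$-plaque). The paper instead routes around this entirely by invoking the $c$-Gibbs $u$-state framework of Ures--Viana--Yang--Yang: since both $f$ and $g$ factor over $A^{n_1}$, Theorem~\ref{dengjia} identifies Gibbs $u$-states with $c$-Gibbs $u$-states (via the $u$-entropy criterion $h_\mu(f,\mathscr{F}^{uu})=\log\sigma_1$), and then Proposition~\ref{gdebian} checks that reference measures on $f$-plaques push to reference measures on $g$-plaques, so that the limit in Corollary~\ref{gibbsc} gives a $c$-Gibbs $u$-state of $g$. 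What this buys is that the disintegration bookkeeping is replaced by a single entropy identity and the already-packaged UVYY results; what your route buys is that it is self-contained and avoids importing that machinery.
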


\subsection{Projection of Ergodic Measures}

.\begin{lem}\label{pi}
Let \(\mu\) be any ergodic \(f\)-invariant measure and $(\pi^3_2)_*\mu
= \mu_{\mathbb{T}^2\times\mathbb{S}}$.
Then
\[
\int \log\Bigl|\det\bigl(Df|_{ \widetilde{T\mathbb{S}}(x)}\bigr)\Bigr| d\mu(x)=
\int \log\Bigl|\det\bigl(Dg|_{T\mathbb{S}(y)}\bigr)\Bigr|
d\mu_{\mathbb{T}^2\times\mathbb{S}}(y).
\]
\end{lem}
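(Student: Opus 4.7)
The plan is to exploit the semi-conjugacy \(\pi^3_2 \circ f = g \circ \pi^3_2\), which at the infinitesimal level gives \(d\pi^3_2 \circ Df = Dg \circ d\pi^3_2\). Because \(E^u \subset \ker d\pi^3_2\) and the dominated splitting \(T\mathbb{S} \oplus E^u = E^u \oplus_\succ \widetilde{T\mathbb{S}}\) forces \(\widetilde{T\mathbb{S}}\) to be transverse to \(E^u\), the restriction
\[
d\pi^3_2 \big|_{\widetilde{T\mathbb{S}}(x)} : \widetilde{T\mathbb{S}}(x) \longrightarrow T\mathbb{S}(\pi^3_2(x))
\]
is a linear isomorphism of one-dimensional spaces intertwining \(Df|_{\widetilde{T\mathbb{S}}}\) with \(Dg|_{T\mathbb{S}}\). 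The task is then to transfer this isomorphism to an equality of Jacobians, modulo a bounded coboundary.

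First, I would invoke \(\widetilde{T\mathbb{S}} \subset \mathscr{C}_{\tau_3}(T\mathbb{S}, E^u)\) (Lemma~\ref{cone} applied to the backward-invariant cone constructed in Section~\ref{five}) to represent each fiber \(\widetilde{T\mathbb{S}}(x)\) as the graph \(\{\beta(\phi(x)\,e_u + e_s) : \beta \in \mathbb{R}\}\), with unit vectors \(e_u \in E^u(x)\), \(e_s \in T\mathbb{S}(x)\), and slope \(|\phi(x)| \le \tau_3\). The tilt \(\rho(x) := \sqrt{1 + \phi(x)^2}\) is then continuous and uniformly bounded in \([1, \sqrt{1+\tau_3^2}]\), so \(\log \rho \in L^\infty \subset L^1(\mu)\).

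Next, I would read off the upper-triangular \(2\times 2\) block of \(Df\) on the invariant plane \(E^u \oplus T\mathbb{S}\), whose diagonal entries are \(\sigma_0\, \partial P/\partial d\) and \(\partial Q/\partial c\). Applying \(Df\) to \(\phi(x)\,e_u + e_s\) and using invariance of \(\widetilde{T\mathbb{S}}(f(x))\) to identify the image as \((\partial Q/\partial c)(\phi(f(x))\,e_u + e_s)\) yields the pointwise formula
\[
\left|\det Df\big|_{\widetilde{T\mathbb{S}}(x)}\right| = \left|\tfrac{\partial Q}{\partial c}(x)\right| \cdot \frac{\rho(f(x))}{\rho(x)}.
\]
Since \(T\mathbb{S}\) is itself \(Dg\)-invariant with middle diagonal entry \(\partial Q/\partial c\), one also has \(|\det Dg|_{T\mathbb{S}(y)}| = |\partial Q/\partial c|(y)\).

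Finally, I would take logarithms and integrate against \(\mu\): the coboundary term \(\int (\log \rho \circ f - \log \rho)\, d\mu\) vanishes by \(f\)-invariance together with \(\log \rho \in L^1(\mu)\), while the remaining term \(\int \log |\partial Q/\partial c|\, d\mu\) converts into \(\int \log |\partial Q/\partial c|\, d\mu_{\mathbb{T}^2 \times \mathbb{S}}\) via the change-of-variables formula for the push-forward, because \(\partial Q/\partial c\) depends only on the \(\mathbb{T}^2 \times \mathbb{S}\)-coordinates and hence factors through \(\pi^3_2\). The only real obstacle is securing the uniform boundedness of \(\log \rho\); this is exactly what the cone-field inclusion supplies, so the argument never uses ergodicity of \(\mu\).
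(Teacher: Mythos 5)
Your proof is correct, and it rests on the same geometric insight as the paper's: the restriction of \(D\pi^3_2\) to \(\widetilde{T\mathbb{S}}\) is an isomorphism onto \(T\mathbb{S}\) with Jacobian uniformly bounded between \(\cos(\arctan\tau_3)\) and \(1\), which is exactly your \(1/\rho\). The paper packages this differently: it applies the chain rule to \(\pi^3_2 \circ f^n = g^n \circ \pi^3_2\), obtains
\(\bigl|\det D\pi^3_2|_{\widetilde{T\mathbb{S}}(f^n x)}\bigr|\cdot\bigl|\det Df^n|_{\widetilde{T\mathbb{S}}(x)}\bigr| = \bigl|\det Dg^n|_{T\mathbb{S}(\pi^3_2 x)}\bigr|\cdot\bigl|\det D\pi^3_2|_{\widetilde{T\mathbb{S}}(x)}\bigr|\), kills the bounded \(D\pi^3_2\)-factors by dividing by \(n\) and letting \(n\to\infty\), and then invokes the Birkhoff ergodic theorem twice (once for \(g\), once for \(f\)) to pass from Lyapunov exponents to integrals. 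You instead prove the \emph{pointwise one-step cohomology identity}
\(\log\bigl|\det Df|_{\widetilde{T\mathbb{S}}(x)}\bigr| = \log\bigl|\tfrac{\partial Q}{\partial c}(\pi^3_2 x)\bigr| + \log\rho(f(x)) - \log\rho(x)\),
then integrate directly: the coboundary term vanishes by \(f\)-invariance and \(\log\rho\in L^\infty\), and the remaining term transfers by the push-forward formula since \(\tfrac{\partial Q}{\partial c}\) factors through \(\pi^3_2\). Your route is a bit more economical and, as you note, never uses ergodicity of \(\mu\), whereas the paper's Birkhoff argument does; this strengthens the lemma to arbitrary invariant measures, which is a genuine (if modest) gain. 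One small wording caveat: you should write \(\tfrac{\partial Q}{\partial c}(\pi^3_2 x)\) rather than \(\tfrac{\partial Q}{\partial c}(x)\) in the displayed formula, since the function lives on \(\mathbb{T}^2\times\mathbb{S}\) — but you fix this implicitly in the final paragraph when invoking the change of variables.
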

\begin{proof}
It is clear that \( \mu_{\mathbb{T}^2\times\mathbb{S}} \) is an ergodic \(g\)-invariant measure.  Since
\[
\pi^3_2\circ f = g \circ \pi^3_2\quad \text{and} \quad \widetilde{T\mathbb{S}} \subset \mathscr{C}_{\tau_3}(T\mathbb{S}, E^u),
\]
it follows from invariance that
\[
\det \bigl(D\pi^3_2|_{\widetilde{T\mathbb{S}}}\bigr)
\cdot \det \bigl(Df^n|_{\widetilde{T\mathbb{S}}}\bigr)
= \det \bigl(Dg^n|_{T\mathbb{S}}\bigr) \cdot \det \bigl(D\pi^3_2|_{\widetilde{T\mathbb{S}}}\bigr).
\]
Recall that \( \widetilde{T\mathbb{S}} \subset \mathscr{C}_{\tau_3}(T\mathbb{S}, E^u) \). Thus, we have the following inequality:
\[
\cos \theta_3 \leq \left| \det \left( D\pi^3_2 \big|_{\widetilde{T\mathbb{S}}} \right) \right| \leq 1,
\]
where \( \theta_3 = \arctan(\tau_3)\in[0,\frac{\pi}{2}) \). For better understanding, we present the following figure.
\begin{equation}\label{toushejiaodu}
	\includegraphics[width=1\textwidth]{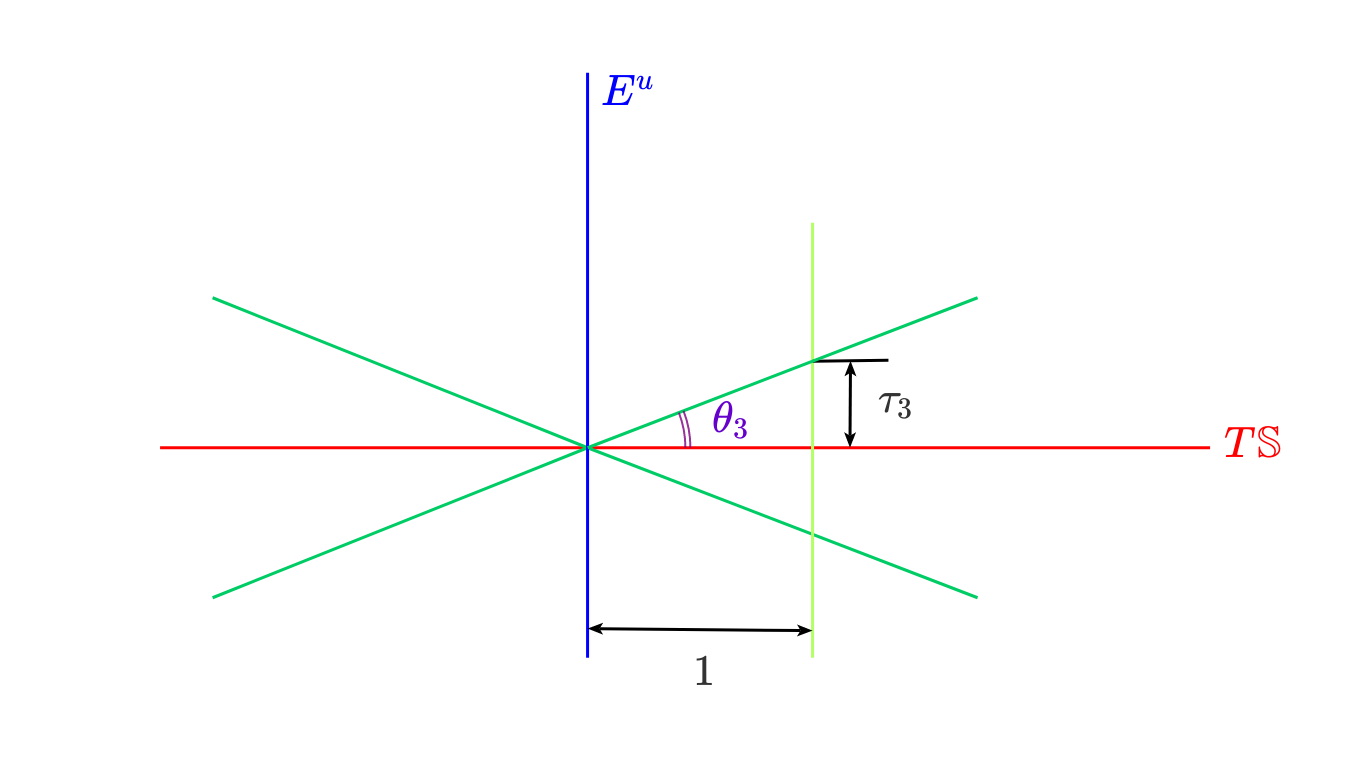}
\end{equation}
It follows that if \(y \in \mathbb{T}^2 \times \mathbb{S}\) satisfies
\[
\lim_{n \to \pm \infty} \frac{\log \bigl| \det \bigl(Dg^n|_{T\mathbb{S}}(y)\bigr) \bigr|}{n} \text{ exists,}
\]
then for any \(\tilde{x} \in \mathbb{T}^2\),
\[
\lim_{n \to \pm \infty} \frac{\log \bigl| \det \bigl(Df^n|_{\widetilde{T\mathbb{S}}}(y,\tilde{x})\bigr) \bigr|}{n}
= \lim_{n \to \pm \infty} \frac{\log \bigl| \det \bigl(Dg^n|_{T\mathbb{S}}(y)\bigr) \bigr|}{n}.
\]
In other words, in this case,
\[
\lim_{n \to \pm \infty} \frac{\log \bigl| \det \bigl(Df^n|_{\widetilde{T\mathbb{S}}}(y,\tilde{x})\bigr) \bigr|}{n}
\]
is independent of the choice of \(\tilde{x} \in \mathbb{T}^2\).

By the Birkhoff ergodic theorem, for this lemma, there exists a \(\mu_{\mathbb{T}^2 \times \mathbb{S}}\)-full measure set \(\Lambda \subset \mathbb{T}^2 \times \mathbb{S}\) such that for every \(y \in \Lambda\),
\[
\lim_{n \to \pm \infty} \frac{\log \bigl| \det \bigl(Dg^n|_{T\mathbb{S}}(y)\bigr) \bigr|}{n}
= \int \log \Bigl| \det \bigl(Dg|_{T\mathbb{S}}\bigr) \Bigr|  d\mu_{\mathbb{T}^2 \times \mathbb{S}}.
\]
It follows that \(\Lambda \times \mathbb{T}^2\) is a \(\mu\)-full measure set, and for every \((y, \tilde{x}) \in \Lambda \times \mathbb{T}^2\),
\[
\lim_{n \to \pm \infty} \frac{\log \bigl| \det \bigl(Df^n|_{\widetilde{T\mathbb{S}}}(y,\tilde{x})\bigr) \bigr|}{n}
= \int \log \Bigl| \det \bigl(Dg|_{T\mathbb{S}}\bigr) \Bigr|  d\mu_{\mathbb{T}^2 \times \mathbb{S}}.
\]
Hence, combining this with the Birkhoff ergodic theorem once again, we obtain
\[
\int \log\Bigl|\det\bigl(Df|_{ \widetilde{T\mathbb{S}}}\bigr)\Bigr| d\mu=
\int \log\Bigl|\det\bigl(Dg|_{T\mathbb{S}}\bigr)\Bigr|
d\mu_{\mathbb{T}^2\times\mathbb{S}}.
\]
\end{proof}

\subsection{On the Subbundles of the Splitting}

\begin{lem}\label{unstable}
$$\widetilde{E^{uu}_g} 
= D\pi^3_2
\bigl(\widetilde{E^{uu}}\bigr).$$
\end{lem}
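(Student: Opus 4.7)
The plan is to identify $D\pi^3_2(\widetilde{E^{uu}})$ at each point as a one-dimensional $Dg$-invariant subspace trapped in a forward-invariant cone around $E^{uu}$ on $\mathbb{T}^2\times\mathbb{S}$, and then force it to coincide with $\widetilde{E^{uu}_g}$ via the cone-intersection description of the strong unstable bundle. Throughout, the identity will be read pointwise: for each $x\in M$ with $y=\pi^3_2(x)$, the goal is to show $D\pi^3_2(\widetilde{E^{uu}}(x))=\widetilde{E^{uu}_g}(y)$.

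First I would analyze the image of $\widetilde{E^{uu}}$ under $D\pi^3_2$. By~\eqref{tau1}, $\widetilde{E^{uu}}(x)\subset\mathscr{C}_{\tau_1}(E^{uu},E^u\oplus T\mathbb{S}\oplus E^s)(x)$, so any nonzero $v\in\widetilde{E^{uu}}(x)$ decomposes as $v=v_{uu}+v_{rest}$ with $0\neq v_{uu}\in E^{uu}$, $v_{rest}\in E^u\oplus T\mathbb{S}\oplus E^s$, and $\|v_{rest}\|\le\tau_1\|v_{uu}\|$. Since $\ker D\pi^3_2=E^u\oplus E^s$, the component $v_{uu}$ survives the projection, so $D\pi^3_2|_{\widetilde{E^{uu}}(x)}$ is injective and $D\pi^3_2(\widetilde{E^{uu}}(x))$ is one-dimensional. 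Moreover $D\pi^3_2(v_{rest})$ is the $T\mathbb{S}$-component of $v_{rest}$, with norm at most $\tau_1\|v_{uu}\|$, so
\[
D\pi^3_2(\widetilde{E^{uu}}(x))\subset\mathscr{C}_{\tau_1}(E^{uu},T\mathbb{S})(y).
\]
Differentiating $\pi^3_2\circ f=g\circ\pi^3_2$ yields $Dg^n\circ D\pi^3_2=D\pi^3_2\circ Df^n$, so the $Df^n$-invariance of $\widetilde{E^{uu}}$ gives
\[
Dg^n\bigl(D\pi^3_2(\widetilde{E^{uu}}(x))\bigr)=D\pi^3_2(\widetilde{E^{uu}}(f^n(x)))\quad\text{for every }n\in\mathbb{Z}.
\]

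Next I would invoke the cone-intersection characterization of $\widetilde{E^{uu}_g}$. Enlarging $\tau_1$ to some $\alpha\ge\tau_1$ if necessary, the same application of Fact~\ref{fact0} used in Section~\ref{four} to produce $\mathscr{C}_{\rho_1}(E^{uu},T\mathbb{S})$ shows that $\mathscr{C}_\alpha(E^{uu},T\mathbb{S})$ is forward-invariant under $g$ with some contraction factor $\kappa\in(0,1)$, and by the Yoccoz criterion underlying Lemma~\ref{cone} the fibres of $\widetilde{E^{uu}_g}$ are precisely the cone intersections $\bigcap_{n\ge 0}Dg^n\bigl(\mathscr{C}_\alpha(g^{-n}(y))\bigr)$. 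The backward iterates $Dg^{-n}(D\pi^3_2(\widetilde{E^{uu}}(x)))=D\pi^3_2(\widetilde{E^{uu}}(f^{-n}(x)))$ lie in $\mathscr{C}_{\tau_1}(E^{uu},T\mathbb{S})(g^{-n}(y))\subset\mathscr{C}_\alpha(g^{-n}(y))$, so
\[
D\pi^3_2(\widetilde{E^{uu}}(x))\subset Dg^n\bigl(\mathscr{C}_\alpha(g^{-n}(y))\bigr)\quad\text{for every }n\ge 0.
\]
Thus $D\pi^3_2(\widetilde{E^{uu}}(x))\subset\widetilde{E^{uu}_g}(y)$, and matching dimensions force equality.

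The main obstacle I foresee is justifying the cone-intersection step, since the paper uses Lemma~\ref{cone} only in its existence form. If one prefers to avoid invoking the Yoccoz characterization directly, the alternative route is to exhibit $D\pi^3_2(\widetilde{E^{uu}})$ as the expanding bundle of a dominated splitting $D\pi^3_2(\widetilde{E^{uu}})\oplus_{\succ}(T\mathbb{S}\oplus\widetilde{E^{ss}_g})$: transversality to $T\mathbb{S}\oplus\widetilde{E^{ss}_g}\subset E^{ss}\oplus T\mathbb{S}$ is immediate from the surviving $E^{uu}$-component, and the required domination estimate follows by transferring the uniform expansion of $\widetilde{E^{uu}}$ under $Df$ to $Dg$ via the bi-Lipschitz control on $D\pi^3_2|_{\widetilde{E^{uu}}}$ provided by the angle bound $\tau_1$, together with the known domination $\widetilde{E^{uu}_g}\oplus_{\succ}(T\mathbb{S}\oplus\widetilde{E^{ss}_g})$. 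Lemma~\ref{zhibiao} applied to the two resulting one-dimensional dominated splittings then yields $D\pi^3_2(\widetilde{E^{uu}})=\widetilde{E^{uu}_g}$.
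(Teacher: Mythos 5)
Your proof is correct, and it takes a genuinely different route from the paper's while arriving at the same endpoint. The paper first passes through an intermediary bundle $\widetilde{\widetilde{E^{uu}}}$: it notes that $E^{uu}\oplus T\mathbb{S}\oplus E^u$ is $Df$-invariant, applies the cone criterion inside that subbundle to get $\widetilde{\widetilde{E^{uu}}}$, and then uses Lemma~\ref{zhibiao} (uniqueness of dominated splittings of equal dimension) to conclude $\widetilde{E^{uu}}=\widetilde{\widetilde{E^{uu}}}$, so that the bundle it projects is known from the outset to avoid $E^s$. You instead work directly with the original containment $\widetilde{E^{uu}}\subset\mathscr{C}_{\tau_1}(E^{uu},E^u\oplus T\mathbb{S}\oplus E^s)$ and observe that since $D\pi^3_2$ kills $E^u\oplus E^s$ and is an isometry on $E^{uu}$ and on $T\mathbb{S}$, the image lands in $\mathscr{C}_{\tau_1}(E^{uu},T\mathbb{S})$ anyway — so the intermediary is unnecessary for obtaining the cone containment. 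For the final identification with $\widetilde{E^{uu}_g}$, the paper proves the domination $D\pi^3_2(\widetilde{\widetilde{E^{uu}}})\oplus_\succ T\mathbb{S}$ by an explicit growth-rate comparison ($\|Dg^n|_{D\pi^3_2(\widetilde{\widetilde{E^{uu}}})}\|\ge\sigma_1^n/\sqrt{1+\tau_1^2}$ vs.\ $\max\partial Q/\partial c$) and then invokes uniqueness; your primary route instead invokes the cone-intersection characterization $\widetilde{E^{uu}_g}(y)=\bigcap_{n\ge 0}Dg^n\bigl(\mathscr{C}_\alpha(g^{-n}(y))\bigr)$, which bypasses the expansion estimate entirely. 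That characterization is standard (it is how the Yoccoz criterion is proved) but is not explicitly in the paper's toolkit, which cites Lemma~\ref{cone} only in existence form; you correctly flag this, and your alternative route — establishing the dominated splitting via the bi-Lipschitz control of $D\pi^3_2|_{\widetilde{E^{uu}}}$ plus the known domination for $g$, then applying Lemma~\ref{zhibiao} — is essentially the paper's argument. Both routes are sound; the cone-intersection version is cleaner and more conceptual, while the paper's version stays strictly within its cited lemmas.
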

\begin{proof}
By \eqref{exp1} and \eqref{exp2}, \(Df\) acting on  the bundle
\[
E^{uu} \oplus T\mathbb{S} \oplus E^{u} \oplus E^{s} \oplus E^{ss} \longrightarrow E^{uu} \oplus T\mathbb{S} \oplus E^{u} \oplus E^{s} \oplus E^{ss}
\]
takes the form
\begin{equation}\label{exp5}
\renewcommand{\arraystretch}{2.9}
\begin{pmatrix}
\sigma_1 & 0 & 0 & 0 & 0 \\
\frac{\partial Q}{\partial a} & \frac{\partial Q}{\partial c} & 0  & 0 & \frac{\partial Q}{\partial b} \\
\sigma_0\frac{\partial P}{\partial a}  & \sigma_0\frac{\partial P}{\partial c} & \sigma_0\frac{\partial P}{\partial d}  & \sigma_0\frac{\partial P}{\partial e} & \sigma_0\frac{\partial P}{\partial b} \\
0 & 0 & 0 & \frac{1}{\sigma_0} & 0 \\
0 & 0 & 0 & 0 & \frac{1}{\sigma_1}
\end{pmatrix}
\quad \text{or}\quad
\renewcommand{\arraystretch}{2.5}
\begin{pmatrix}
\sigma_1 & 0 & 0 & 0 & 0 \\
\frac{\partial Q}{\partial a} & \frac{\partial Q}{\partial c}& 0   & 0 & \frac{\partial Q}{\partial b} \\
0   & 0 & \sigma_0 & 0 & 0 \\
0 & 0 & 0 & \frac{1}{\sigma_0} & 0 \\
0 & 0 & 0 & 0 & \frac{1}{\sigma_1}
\end{pmatrix}
\end{equation}
We observe that \( E^{uu} \oplus T\mathbb{S} \oplus E^u \) is an invariant subbundle. Furthermore, 
\[
\mathscr{C}_{\tau_1}(E^{uu}, T\mathbb{S} \oplus E^u) \subset \mathscr{C}_{\tau_1}(E^{uu}, E^u \oplus T\mathbb{S} \oplus E^s),
\]
Since the latter is forward invariant from equation~\eqref{tau1}, \( \mathscr{C}_{\tau_1}(E^{uu}, T\mathbb{S} \oplus E^u) \) is also forward invariant. 
 Thus, from the forward invariance of \( \mathscr{C}_{\tau_1}(E^{uu}, T\mathbb{S} \oplus E^u) \), we can obtain the dominated splitting
\begin{equation}\label{domina}
E^{uu} \oplus T\mathbb{S} \oplus E^u
= \widetilde{\widetilde{E^{uu}}} \oplus_\succ \bigl(T\mathbb{S} \oplus E^u\bigr).
\end{equation}
Combining  Lemma~\ref{mutually} and the dominated splitting~\eqref{fenjiezicong}, we have:
\[
\left( E^u \oplus_\succ \widetilde{T\mathbb{S}} \right) \oplus_\succ \left( \widetilde{E^s} \oplus_\succ \widetilde{E^{ss}} \right).
\]
It then follows from relation~\eqref{lianggeES} and Lemma~\ref{mutually}  that
\begin{equation}\label{xin}
TM = \widetilde{\widetilde{E^{uu}}} \oplus_\succ \left( \left(T\mathbb{S} \oplus E^u\right) \oplus_\succ \left( \widetilde{E^s} \oplus_\succ \widetilde{E^{ss}} \right) \right).
\end{equation}
Since we have \( \dim \widetilde{E^{uu}} = \dim \widetilde{\widetilde{E^{uu}}} = 1 \) and the dominated splitting~\eqref{dazicong}, it follows from the second item of Lemma~\ref{zhibiao} that
\[
\widetilde{E^{uu}} =\widetilde{\widetilde{E^{uu}}} .
\]
Thus, to complete the proof of the lemma, it is enough to show that
\begin{equation}\label{deng}
\widetilde{E^{uu}_g} 
= D\pi^3_2
\bigl(\widetilde{\widetilde{E^{uu}}}\bigr).
\end{equation}
By the invariance of the subbundle, we have
\[
Dg \circ D\pi^3_2(\widetilde{\widetilde{E^{uu}}})=D\pi^3_2 \circ Df(\widetilde{\widetilde{E^{uu}}})=D\pi^3_2(\widetilde{\widetilde{E^{uu}}}).
\]
It follows that
\[
D\pi^3_2(\widetilde{\widetilde{E^{uu}}})
\]
is an invariant subbundle of \(Dg\). By the uniqueness of the dominated splitting, in order to establish Equation~\eqref{deng}, it suffices to show that
\[
D\pi^3_2(\widetilde{\widetilde{E^{uu}}}) \oplus_\succ T\mathbb{S}.
\]

Since relation~\eqref{domina} holds, we can choose a vector
\[
(1, \epsilon_1, \epsilon_2) \in \widetilde{\widetilde{E^{uu}}} \subset E^{uu} \oplus T\mathbb{S} \oplus E^u.
\]
Then
\[
\renewcommand{\arraystretch}{2.9}
\begin{pmatrix}
\sigma_1 & 0 & 0 \\
\frac{\partial Q}{\partial a} & \frac{\partial Q}{\partial c} & 0 \\
\sigma_0 \frac{\partial P}{\partial a} & \sigma_0 \frac{\partial P}{\partial c} & \sigma_0 \frac{\partial P}{\partial b}
\end{pmatrix}
\begin{pmatrix} 1 \\ \epsilon_1 \\ \epsilon_2 \end{pmatrix}=\begin{pmatrix}
\sigma_1 \\
\frac{\partial Q}{\partial a} + \frac{\partial Q}{\partial c} , \epsilon_1 \\
\sigma_0 \frac{\partial P}{\partial a} + \sigma_0 \frac{\partial P}{\partial c}, \epsilon_1 + \sigma_0 \frac{\partial P}{\partial b} , \epsilon_2
\end{pmatrix}.
\]
Combining the inclusion
\begin{equation}\label{gougu}
\widetilde{\widetilde{E^{uu}}} \subset \mathscr{C}_{\tau_1}(E^{uu}, T\mathbb{S} \oplus E^u),
\end{equation}
and 
\[
D\pi^3_2( \mathscr{C}_{\tau_1}(E^{uu}, T\mathbb{S} \oplus E^u)) =  \mathscr{C}_{\tau_1}(E^{uu}, T\mathbb{S}),
\]
we have
\[
D\pi^3_2\bigl(\widetilde{\widetilde{E^{uu}}}\bigr) \subset \mathscr{C}_{\tau_1}(E^{uu}, T\mathbb{S}),
\]
where \( \mathscr{C}_{\tau_1}(E^{uu}, T\mathbb{S}) \) refers to the cone in the system \( (\mathbb{T}^2 \times \mathbb{S}, g) \) (Therefore, it makes sense to write it this way). 
By the second item of Lemma~\ref{fact1}, we have
\[
\bigl\| Dg^n \big|_{D\pi^3_2(\widetilde{\widetilde{E^{uu}}})} \bigr\| \ge \frac{\sigma_1^n}{\sqrt{1+\tau_1^2}}.
\]
Recalling our construction, we have 
\[
\sigma_1  \gg\max \Biggl\{ \max\Bigl\{ \frac{\partial Q}{\partial c}(a,b,c) : a,b,c \in \mathbb{R} \Bigr\}, \Bigl( \min\Bigl\{ \frac{\partial Q}{\partial c}(a,b,c) : a,b,c \in \mathbb{R} \Bigr\} \Bigr)^{-1} \Biggr\}\ge\frac{\partial f_{\mathbb{T}^2 \times \mathbb{S}}(x, y)}{\partial y},\]
where $f_{\mathbb{T}^2 \times \mathbb{S}}(x, y)\in\mathbb{S}$.
It then follows that
\[
D\pi^3_2(\widetilde{\widetilde{E^{uu}}}) \oplus_\succ T\mathbb{S}.
\]
\end{proof}

Denote by \(\mathscr{F}^{uu}(f)\) and \(\mathscr{F}^{uu}(g)\) the strong unstable foliations of \(f\) and \(g\), respectively.
\begin{cor}\label{diffeomorphism}
We have
\[
\pi^3_2\bigl(\mathscr{F}^{uu}(f)\bigr) = \mathscr{F}^{uu}(g),
\]
and the map
\[
\pi^3_2 : \mathscr{F}^{uu}(x, f) \to \mathscr{F}^{uu}(\pi^3_2(x), g)
\]
is a diffeomorphism for every \(x\in\T^2\times\mathbb{S}\times\T^2\).
\end{cor}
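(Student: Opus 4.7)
The plan is to upgrade the pointwise tangent identity $D\pi^3_2(\widetilde{E^{uu}}) = \widetilde{E^{uu}_g}$ from Lemma~\ref{unstable} to a leafwise diffeomorphism via a direct path-lifting argument, with global existence of the lift ensured by the compactness of $M$.

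First, I would check that $D\pi^3_2$ restricted to $\widetilde{E^{uu}}$ is a fiberwise linear isomorphism onto $\widetilde{E^{uu}_g}$. Both bundles are one-dimensional, and the cone inclusion $\widetilde{E^{uu}} \subset \mathscr{C}_{\tau_1}(E^{uu}, T\mathbb{S} \oplus E^u)$ recorded in~\eqref{gougu} forces every nonzero $v \in \widetilde{E^{uu}}$ to have a nontrivial $E^{uu}$-component, while $\ker D\pi^3_2 = E^u \oplus E^s$; hence the restriction is injective and therefore an isomorphism onto the one-dimensional target. Consequently, for each leaf $L = \mathscr{F}^{uu}(x, f)$, the restriction $\pi^3_2|_L$ is a smooth immersion of the connected $1$-manifold $L$ whose image is everywhere tangent to the integrable line field $\widetilde{E^{uu}_g}$. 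By uniqueness of integral curves, that image is contained in a single leaf of $\mathscr{F}^{uu}(g)$, and the equivariance $\pi^3_2 \circ f = g \circ \pi^3_2$ combined with the dynamical characterization of strong unstable manifolds (as the set of points whose backward $f$-iterates approach $x$ exponentially) forces this leaf to be $\mathscr{F}^{uu}(\pi^3_2(x), g)$. So $\pi^3_2|_L$ is a local diffeomorphism into that target leaf.

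For surjectivity, given $y \in \mathscr{F}^{uu}(\pi^3_2(x), g)$, I would choose a smooth path $\gamma \colon [0,1] \to \mathscr{F}^{uu}(\pi^3_2(x), g)$ with $\gamma(0) = \pi^3_2(x)$ and $\gamma(1) = y$, and use the fiberwise inverse of $D\pi^3_2|_{\widetilde{E^{uu}}}$ to define a time-dependent vector field $V_t$ on $M$ by $V_t(p) \in \widetilde{E^{uu}}(p)$ and $D\pi^3_2(V_t(p)) = \dot{\gamma}(t)$. Continuity of the line field and compactness of $M$ give a uniform bound on $\|V_t\|$ over $[0,1] \times M$, so the ODE $\dot{\tilde{\gamma}} = V_t(\tilde{\gamma})$ with $\tilde{\gamma}(0) = x$ admits a solution defined on the full interval $[0,1]$. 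Since $\tilde{\gamma}$ stays tangent to $\widetilde{E^{uu}}$, it remains in $L$, and $\pi^3_2(\tilde{\gamma}(1)) = y$.

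Finally, injectivity comes from the observation that a surjective local diffeomorphism between connected $1$-manifolds is a covering map, together with the fact that each leaf of $\mathscr{F}^{uu}(g)$ is diffeomorphic to $\mathbb{R}$. The latter holds because the projection of such a leaf to the first $\mathbb{T}^2$ factor is tangent to the $E^{uu}$-direction of $A^{n_1}$, whose slope is irrational, so no leaf can close up. Both assertions of the corollary then follow at once. The main obstacle in this plan is the global existence of the lift in the third step, which is cleanly handled by compactness; the purely topological upgrade from covering to diffeomorphism in the last step requires the simple connectedness of the target leaf, and this is where invoking the irrationality of the $A^{n_1}$-eigendirection is essential.
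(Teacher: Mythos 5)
Your route genuinely differs from the paper's. The paper observes that the projected family of curves is a $g$-invariant family everywhere tangent to $\widetilde{E^{uu}_g}$, invokes uniqueness of the strong unstable foliation of $g$ to conclude $\pi^3_2\bigl(\mathscr{F}^{uu}(f)\bigr) = \mathscr{F}^{uu}(g)$, and then treats the leaf-wise diffeomorphism statement as immediate from Lemma~\ref{unstable} ("it is easy to see"). You instead establish the fiberwise isomorphism $D\pi^3_2 : \widetilde{E^{uu}} \to \widetilde{E^{uu}_g}$ carefully, build the leaf-wise diffeomorphism directly by a path-lifting argument, and recover the foliation identity as a consequence. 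This is more hands-on and actually supplies the surjectivity step that the paper elides; both approaches are valid, but yours makes explicit what the paper's one-line appeal to uniqueness of invariant foliations leaves tacit.

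Two points deserve tightening. First, the globally defined time-dependent vector field $V_t$ on all of $M$ does not quite make sense as stated: the equation $D\pi^3_2\bigl(V_t(p)\bigr) = \dot\gamma(t)$ equates vectors in the fiber over $\pi^3_2(p)$ with a vector over $\gamma(t)$, and these are in different tangent spaces unless $\pi^3_2(p) = \gamma(t)$. What you actually want is the lift ODE along the leaf $L$ itself: a lift $\tilde\gamma$ is characterized by $\dot{\tilde\gamma}(t) \in \widetilde{E^{uu}}\bigl(\tilde\gamma(t)\bigr)$ together with $D\pi^3_2\,\dot{\tilde\gamma}(t) = \dot\gamma(t)$, which is internally consistent once $\pi^3_2\circ\tilde\gamma = \gamma$ is preserved. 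Compactness of $M$ still gives a uniform bound on the operator norm of the fiberwise inverse of $D\pi^3_2|_{\widetilde{E^{uu}}}$, hence on $\|\dot{\tilde\gamma}\|$, so the lift cannot blow up in finite time; but you then need completeness of the strong unstable leaf (standard for partially hyperbolic systems) to conclude that the limiting point as $t \to T^-$ lies in $L$ rather than merely in $M$. Second, "a surjective local diffeomorphism between connected $1$-manifolds is a covering map" is false in general (consider $t \mapsto e^{it}$ from $(0,3\pi)$ onto $S^1$). What rescues your conclusion is that both $L$ and the target leaf are diffeomorphic to $\mathbb{R}$; a local diffeomorphism $\mathbb{R}\to\mathbb{R}$ has nowhere-vanishing derivative, hence is strictly monotone, and a surjective monotone map is a bijection. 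You supply the irrationality argument for the target; you should also note that $L$ itself cannot be a circle, since a compact strong unstable leaf would contradict uniform expansion along $E^{uu}$. With these two repairs the proposal is complete and correct.
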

\begin{proof}
The projection of  the strong unstable foliation of \(f\) onto \(\mathbb{T}^2 \times \mathbb{S}\) is given by
\[
\pi^3_2\bigl(\mathscr{F}^{uu}(f)\bigr)
:=
\Bigl\{
\pi^3_2\bigl(\mathscr{F}^{uu}((y,x),f)\bigr)
:\ (y,x)\in (\mathbb{T}^2 \times \mathbb{S}) \times \mathbb{T}^2
\Bigr\}.
\]

Combining Lemma~\ref{unstable} with the semiconjugacy relation
\[
\pi^3_2\circ f = g \circ \pi^3_2,
\]
we obtain that the projected foliation \(\pi^3_2\bigl(\mathscr{F}^{uu}(f)\bigr)\) is a \(g\)-invariant foliation everywhere tangent to the strong unstable subbundle \(\widetilde{E^{uu}_g}\).  By the uniqueness of the strong unstable foliation of \(g\), it follows that
\[
\pi^3_2\bigl(\mathscr{F}^{uu}(f)\bigr) = \mathscr{F}^{uu}(g).
\]
By Lemma~\ref{unstable} again, it is easy to see that \( \pi^3_2 \) is a diffeomorphism when restricted to each strong unstable leaf of $f$.
\end{proof}

\subsection{Factoring over \(A^{n_1}\) and Projected Measures of Gibbs \(u\)-States}

Recall that \( A^{n_1} \) is the fixed hyperbolic automorphism on \( \mathbb{T}^2 \), with its stable and unstable foliations being \( \mathcal{F}^{ss}(A^{n_1}) \) and \( \mathcal{F}^{uu}(A^{n_1}) \), respectively. 
 Consider a $C^{1+}$-diffeomorphism
\[
F : \mathbb{T}^2 \times N \to \mathbb{T}^2 \times N,\qquad
F(x,y) = \bigl(A^{n_1}(x), k(x,y)\bigr),
\]
where \(N\) is a smooth Riemannian manifold and \(F\) is a diffeomorphism.
Assume that 
\[
 \mathscr{C}_\alpha(E^{uu}, E^{ss}\oplus TN)
\]
is forward invariant for some $\alpha>0$. Then \(F\) admits a partially hyperbolic splitting
\[
T (\mathbb{T}^2 \times N) = E^{uu}_F \oplus_{\succ} (E^{ss} \oplus TN).
\]
It follows that \( F \) admits the strong unstable foliation \( \mathscr{F}^{uu}(F) \).
It is clear that \(E^{ss} \oplus TN\) is uniquely integrable, and the corresponding foliation is given by
\[
\mathscr{F}^{cs}(F) = \{\mathcal{F}^{ss}(x,A^{n_1}) \times N:x\in\T^2\}.
\]
Let
\[
\text{proj}: \mathbb{T}^2 \times N \longrightarrow \mathbb{T}^2
\]
be the projection defined by
\[
\text{proj}(t,m) = t,
\quad \text{for all } t \in \mathbb{T}^2, \ m \in N.
\]
Then we have
\begin{itemize}
\item $\text{proj} \circ F = A^{n_1} \circ \text{proj}$;
\item $\text{proj}(\mathcal{F}^{ss}(x,A^{n_1}) \times N) = \mathcal{F}^{ss}(x,A^{n_1})$ for every $x \in \T^2$;
\item The projection
\[
\text{proj} : \mathscr{F}^{uu}(x, F) \to \mathcal{F}^{uu}(\text{proj}(x), A^{n_1})
\]
is a homeomorphism for every \(x \in \mathbb{T}^2 \times N\).
\end{itemize}
It is then clear that \( F \) "factors over \( A^{n_1} \)" via \( \text{proj} \). The concept of factoring over Anosov, introduced by Ures, Viana, F. Yang, and J. Yang \cite{UVYY}, is used to study measures of maximal \( u \)-entropy.

Let $\mathcal{R} = \{ R_1, \ldots, R_m \}$ be a finite collection of closed sets forming a cover of $\mathbb{T}^2$, such that their interiors are pairwise disjoint. For each $i$, denote by $\F^{uu/ss}_i(x)$ the connected component of the intersection $\F^{uu/ss}(x, A^{n_1}) \cap R_i$ that contains the point $x$. 
We say that $\mathcal{R}$ is a  \textbf{ Markov partition} for $A^{n_1}$ if the following properties hold:
\begin{itemize}
\item Each $R_i$ is the closure of its interior, i.e., $R_i = \overline{\operatorname{Int}(R_i)}$.
\item For any $x, y \in R_i$, the intersection $\F^{uu}_i(x) \cap \F^{ss}_i(y)$ consists of exactly one point.
\item If $x \in \operatorname{Int}(R_i)$ and $A(x) \in \operatorname{Int}(R_j)$, then
  $$
  A^{n_1}(\F^{ss}_i(x)) \subset \F^{ss}_j(A^{n_1}(x)) \quad \text{and} \quad A^{n_1}(\F^{uu}_i(x)) \supset \F^{uu}_j(A^{n_1}(x)).
  $$
\end{itemize}
At this point, we define \( \mathcal{M} = \{ R_1 \times N, \ldots, R_m \times N \} \) as the Markov partition of \( F \).

For any $x \in R_i \times N$, the  \textbf{strong-unstable plaque} at $x$, denoted by $\mathscr{F}^{uu}_i(x,F)$, is defined as the connected component of
$$
\mathscr{F}^{uu}(x, F) \cap R_i \times N
$$
that contains $x$. It was explained by Ures, Viana, F. Yang, and J. Yang \cite{UVYY} that the strong unstable plaque defined in this way satisfies
\[
\text{proj}\bigl( \mathscr{F}^{uu}_i(x,F) \bigr) = \mathcal{F}^{uu}_i \bigl( \text{proj}(x) \bigr).
\]
The probability measure \( \nu^{uu}_{i,x,F} \) on \( \mathscr{F}^{uu}_i(x,F) \) is a \textbf{reference measure} if
\[
\text{proj}_* (\nu^{uu}_{i,x,F}) = \operatorname{vol}^u_{i, \text{proj}(x)},
\]
where \( \operatorname{vol}^u_{i, \text{proj}(x)} \) denotes the normalized Lebesgue measure on \( \mathcal{F}^{uu}_i \bigl( \text{proj}(x) \bigr)\).

Let \( \mu \) be an invariant measure of \( F \). For each \( i \), let \( \{\mu^{uu}_{i,x,F} : x \in R_i \times N\} \) denote the disintegration of the restriction \( \mu|_{R_i \times N} \) relative to the partition \( \{ \mathscr{F}^{uu}_i(x,F) : x \in R_i \times N \} \). Now we say \( \mu \) is a \textbf{\( c \)-Gibbs \( u \)-state} if for \( \mu \)-almost every \( x \), \[ \mu^{uu}_{i,x,F} = \nu^{uu}_{i,x,F} .\]
  The notation ``\( c \)'' in \( c \)-Gibbs \( u \)-states emphasizes that the associated reference measures possess locally constant Jacobians. This feature distinguishes \( c \)-Gibbs \( u \)-states from classical Gibbs \( u \)-states, which are defined using Lebesgue measures on strong unstable leaves.   Ures, Viana, F.~Yang, and J.~Yang \cite[Theorem~B]{UVYY} established an equivalence between measures of maximal \( u \)-entropy and \( c \)-Gibbs \( u \)-states.

\begin{cor}\label{gibbsc}
We have
\begin{itemize}
\item\cite[Proposition~4.1]{UVYY} For every \(x \in R_i \times N\) and every \(i \in \{1,\ldots,m\}\), every accumulation point of the sequence
\[
\mu_n = \frac{1}{n}\sum_{j=0}^{n-1} F^j_*( \nu^{uu}_{i,x,F})
\]
is a \(c\)-Gibbs \(u\)-state of $F$, where \(\nu^{uu}_{i,x,F}\) is the reference measure defined on the strong-unstable plaque \(\mathscr{F}^{uu}_{i}(x,F)\).
\item\cite[Corollary~3.7]{UVYY} For any \(c\)-Gibbs \(u\)-state \(\mu\) of \(F\), the pushforward measure \(\text{proj}_*(\mu)\) coincides with the Lebesgue measure on \(\mathbb{T}^2\).
\item \cite[Lemma~3.1,Corollary~3.7]{UVYY} An \(F\)-invariant probability measure \(\mu\) is a $c$-Gibbs \(u\)-state of \(F\) if and only if its \(u\)-entropy satisfies 
\[
h_\mu(F,\mathscr{F}^{uu}(F))=\log \sigma_{1},
\]
where \(\sigma_{1}\) denotes the largest  eigenvalue of \(A^{n_1}\). 
\end{itemize}
\end{cor}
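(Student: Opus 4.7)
My plan is to treat all three items as consequences of the general UVYY framework \cite{UVYY} for partially hyperbolic systems factoring over an Anosov base, once the structural hypotheses of that framework are verified for the map \(F\) introduced above. Before invoking anything, I would check those hypotheses point by point: \(F\) is \(C^{1+}\) partially hyperbolic with splitting \(T(\mathbb{T}^2 \times N) = E^{uu}_F \oplus_\succ (E^{ss} \oplus TN)\); the projection \(\text{proj}\) realizes a semiconjugacy \(\text{proj}\circ F = A^{n_1}\circ\text{proj}\); the center-stable foliation \(\mathscr{F}^{cs}(F) = \{\mathcal{F}^{ss}(x,A^{n_1}) \times N\}\) is trivially fibered over \(\mathcal{F}^{ss}(A^{n_1})\); and \(\text{proj}\) restricts to a homeomorphism from each strong unstable plaque of \(F\) onto the corresponding unstable plaque of \(A^{n_1}\). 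All of these are already established in the discussion preceding the corollary, so UVYY applies verbatim.

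For item 1, I would adapt the classical Pesin–Sinai Cesaro-averaging argument to the \(c\)-Gibbs setting. Any weak-\(*\) accumulation point \(\mu\) of the averages \(\mu_n\) is automatically \(F\)-invariant, and to identify \(\mu\) as a \(c\)-Gibbs \(u\)-state I would disintegrate \(\mu|_{R_i\times N}\) along the measurable partition \(\{\mathscr{F}^{uu}_i(y,F)\}\) and match the conditional measures with \(\nu^{uu}_{i,y,F}\). The bounded-distortion step here is essentially trivial because, by the factoring property, the Jacobian of \(F\) restricted to \(E^{uu}_F\) equals the Jacobian of \(A^{n_1}\) along its unstable direction, which is the constant \(\sigma_1\); consequently each iterate \(F^j_*\nu^{uu}_{i,x,F}\) restricts to a reference measure on every plaque it covers, up to a multiplicative constant absorbed into the averaging normalization. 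For item 2, I would note that \(\text{proj}_*\mu\) is \(A^{n_1}\)-invariant, and disintegrate it on each \(R_i\) along \(\{\mathcal{F}^{uu}_i(y)\}\); since the conditionals of \(\mu|_{R_i\times N}\) along \(\mathscr{F}^{uu}_i\) are by hypothesis the \(\nu^{uu}_{i,y,F}\), and these push forward under \(\text{proj}\) to \(\operatorname{vol}^u_{i,\text{proj}(y)}\) by the definition of reference measure, the conditionals of \(\text{proj}_*\mu\) along the unstables of \(A^{n_1}\) are exactly normalized Lebesgue. This characterizes \(\operatorname{Leb}_{\mathbb{T}^2}\) as the unique \(A^{n_1}\)-invariant measure with absolutely continuous unstable conditionals of constant density.

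For item 3, my plan is to combine the Ledrappier partial entropy formula (the variant \(h_\mu(F,\mathscr{F}^{uu}(F)) \leq \int \log |\det DF|_{E^{uu}_F}|\, d\mu\), with equality if and only if the conditionals along \(\mathscr{F}^{uu}(F)\) are absolutely continuous with respect to Lebesgue on strong unstable leaves) with the observation that \(|\det DF|_{E^{uu}_F}| \equiv \sigma_1\); then \(h_\mu(F,\mathscr{F}^{uu}(F)) = \log\sigma_1\) is equivalent to absolute continuity of the strong unstable conditionals. In our factoring setting these absolutely continuous conditionals are exactly, up to normalization, the reference measures \(\nu^{uu}_{i,x,F}\): the homeomorphism \(\text{proj}\colon \mathscr{F}^{uu}_i(x,F)\to\mathcal{F}^{uu}_i(\text{proj}(x))\) has constant Jacobian in the unstable direction, so any absolutely continuous conditional pushes forward to a multiple of Lebesgue, which forces it to be \(\nu^{uu}_{i,x,F}\). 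The main obstacle I foresee is precisely this last identification: in general, the notion of \(c\)-Gibbs \(u\)-state is strictly stronger than the classical Gibbs \(u\)-state notion, and the coincidence here relies crucially on the constancy of the unstable Jacobian of \(F\), which must be extracted carefully from the factoring hypothesis before the entropy rigidity argument can close.
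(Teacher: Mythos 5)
The paper does not prove Corollary~\ref{gibbsc} at all: it is a direct citation of \cite[Proposition~4.1, Corollary~3.7, Lemma~3.1]{UVYY}, made legitimate by the verification, in the paragraph immediately preceding the corollary, that \(F\) factors over the Anosov map \(A^{n_1}\) in the sense required by UVYY. Your opening paragraph captures exactly this: verify the factoring hypotheses (partially hyperbolic splitting, semiconjugacy, trivially fibered center-stable, homeomorphism on strong unstable plaques) and then apply UVYY verbatim. That is the paper's whole argument, and up to that point your proposal and the paper coincide.

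Where you depart from the paper is in going on to re-derive the three items, and in that re-derivation there is a flaw you should not gloss over. You assert, both in your item~1 discussion and in item~3, that \(|\det DF|_{E^{uu}_F}|\equiv\sigma_1\) pointwise because the factoring forces the unstable Jacobian of \(F\) to agree with that of \(A^{n_1}\). This is false: \(E^{uu}_F\) is only contained in the cone \(\mathscr{C}_\alpha(E^{uu},E^{ss}\oplus TN)\) and is not literally \(E^{uu}\times\{0\}\), so the norm of \(DF\) restricted to \(E^{uu}_F\) picks up a variable multiplicative correction from the tilt of the subbundle. What is actually true, and what the paper proves carefully later in Theorem~\ref{dengjia}, is only the averaged identity \(\int\log|\det DF|_{E^{uu}_F}|\,d\mu=\log\sigma_1\) for every \(F\)-invariant \(\mu\), obtained by cone forward-invariance, the Pythagorean bounds \(\sigma_1^n\le\|DF^n(v)\|\le\sqrt{1+(m_0+1)\alpha^2}\,\sigma_1^n\), and Birkhoff's theorem. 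You also blur the line between item~3 of this corollary (the UVYY characterization of \(c\)-Gibbs \(u\)-states by \(u\)-entropy) and the separate equivalence Gibbs \(\Leftrightarrow\) \(c\)-Gibbs, which the paper deliberately isolates as Theorem~\ref{dengjia} and proves afterward using item~3; folding both into one argument via a false pointwise Jacobian identity would introduce a circularity the paper is structured precisely to avoid. Since your primary plan is to cite UVYY, these re-derivations are unnecessary; but if you keep them, the constancy of the Jacobian must be replaced by the integrated version, and the logical order between Corollary~\ref{gibbsc} and Theorem~\ref{dengjia} must be respected.
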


 Next, we show
\begin{thm}\label{dengjia}
For any \(F\)-invariant measure \(\mu\), \(\mu\) is a Gibbs \(u\)-state if and only if it is a \(c\)-Gibbs \(u\)-state.
\end{thm}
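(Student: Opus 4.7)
The plan is to reduce both notions to a common partial entropy condition. The key identity I would establish is that for \emph{every} $F$-invariant probability measure $\mu$,
\[
\int \log |\det DF|_{E^{uu}_F}| \, d\mu = \log \sigma_1,
\]
obtained by exploiting the semiconjugacy $\text{proj} \circ F = A^{n_1} \circ \text{proj}$. Given this identity, Ledrappier's characterization \eqref{LS} says $\mu$ is a Gibbs $u$-state iff $h_\mu(F,\mathscr{F}^{uu}(F)) = \int \log |\det DF|_{E^{uu}_F}| \, d\mu = \log \sigma_1$, while the third item of Corollary~\ref{gibbsc} says $\mu$ is a $c$-Gibbs $u$-state iff $h_\mu(F,\mathscr{F}^{uu}(F)) = \log \sigma_1$, so the two notions coincide.

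The main step is to show that $\theta(x) := \|d\text{proj}|_{E^{uu}_F(x)}\|$ is uniformly bounded above and below. Since $\text{proj}$ restricted to each strong unstable plaque is a $C^1$ homeomorphism onto the corresponding $A^{n_1}$-unstable plaque, the chain rule forces $d\text{proj}(E^{uu}_F) \subset E^{uu}(A^{n_1})$. Combined with $\ker d\text{proj} = TN$ and $d\text{proj}|_{E^{uu}}$ being the identity onto $E^{uu}(A^{n_1})$, this yields $E^{uu}_F \subset E^{uu} \oplus TN$. Since $\dim E^{uu}_F = \dim E^{uu}(A^{n_1}) = 1$ and $E^{uu}_F$ lies in the forward-invariant cone $\mathscr{C}_\alpha(E^{uu}, E^{ss} \oplus TN)$, decomposing any unit $v \in E^{uu}_F(x)$ as $v = v_1 + v_3$ with $v_1 \in E^{uu}$ and $v_3 \in TN$ gives $\|v_1\|$ comparable to $\|v\|$, so $\theta(x) = \|v_1\| \in [c_0, 1]$ for some uniform $c_0 > 0$.

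With this in hand, differentiating the semiconjugacy gives, for any unit $v \in E^{uu}_F(x)$,
\[
d\text{proj}_{F(x)}(DF_x v) = dA^{n_1}_{\text{proj}(x)}(d\text{proj}_x v) = \sigma_1 \cdot d\text{proj}_x(v),
\]
so $\|DF_x v\| = \sigma_1 \, \theta(x)/\theta(F(x))$. Taking logarithms,
\[
\log |\det DF|_{E^{uu}_F}|(x) = \log \sigma_1 + \log \theta(x) - \log \theta(F(x)).
\]
Since $\log \theta$ is bounded, the telescoping terms cancel when integrated against the $F$-invariant measure $\mu$, producing the desired identity. Combining with \eqref{LS} and Corollary~\ref{gibbsc} concludes the proof. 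The only real obstacle is the two-sided bound on $\theta$, and this is handled cleanly by the cone structure together with $E^{uu}_F \subset E^{uu} \oplus TN$.
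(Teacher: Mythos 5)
Your proposal is correct, and it follows the paper's reduction exactly: both you and the paper observe that, by Ledrappier's characterization~\eqref{LS} and the third item of Corollary~\ref{gibbsc}, the equivalence of the two notions follows once one shows $\int \log |\det DF|_{E^{uu}_F}| \, d\mu = \log \sigma_1$ for \emph{every} $F$-invariant $\mu$. Where you diverge is in how you prove this integral identity. The paper applies the Birkhoff ergodic theorem to reduce to an almost-everywhere pointwise limit $C(x)$, then estimates $\|DF^n v\|$ for $v$ in the forward-invariant cone $\mathscr{C}_\alpha(E^{uu}, E^{ss}\oplus TN)$ using the Pythagorean theorem, obtaining $\sigma_1^n \le \|DF^n(v)\| \le \sqrt{1+(m_0+1)\alpha^2}\,\sigma_1^n$ and hence $C \equiv \log\sigma_1$. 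You instead derive an exact cohomological identity $\log|\det DF|_{E^{uu}_F}| = \log\sigma_1 + \log\theta - \log\theta\circ F$, where $\theta(x) = \|d\,\mathrm{proj}|_{E^{uu}_F(x)}\|$, and show $\log\theta$ is bounded so the coboundary integrates to zero. The content is the same --- both hinge on $E^{uu}_F$ lying in a uniformly bounded cone over $E^{uu}$ and on the semiconjugacy with $A^{n_1}$ --- but your coboundary formulation is arguably cleaner because it avoids invoking Birkhoff and gives an exact pointwise identity rather than an asymptotic two-sided bound. One small point worth making explicit in your write-up: the inclusion $d\,\mathrm{proj}(E^{uu}_F) \subset E^{uu}(A^{n_1})$ is essential (it is what lets you apply $DA^{n_1}|_{E^{uu}} = \pm\sigma_1\cdot\mathrm{id}$), and your justification via $\mathrm{proj}$ carrying strong-unstable plaques of $F$ onto $A^{n_1}$-unstable plaques, plus $\ker d\,\mathrm{proj} = TN$ having trivial intersection with the cone, is exactly the right argument --- just be sure to note that this forces the $E^{ss}$-component of any $v\in E^{uu}_F$ to vanish, which is what gives $E^{uu}_F\subset E^{uu}\oplus TN$.
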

\begin{proof}
By Equation~\eqref{LS}, which provides an equivalent characterization of Gibbs \(u\)-states, together with the third item  of Corollary~\ref{gibbsc}, it suffices to prove that for any \(F\)-invariant measure \(\mu\)
\begin{equation}\label{mubiao}
\int \log \bigl| \det DF|_{E^{uu}_F} \bigr| d\mu = \log \sigma_1.
\end{equation}

By Birkhoff's Ergodic Theorem, there exists a \(\mu\)-full measure set \(X \subset M\) such that for every \(x \in X\), the limit
\[
C(x) := \lim_{n \to \pm \infty} \frac{1}{n} \sum_{k=0}^{n-1} \log \bigl| \det DF|_{E^{uu}_F(F^k(x))} \bigr|
\]
exists. Moreover, \(C(x)\) is \(F\)-invariant (i.e., \(C(F(x)) = C(x)\)), and satisfies
\begin{equation}\label{ccc}
\int C(x)  d\mu(x) = \int \log \bigl| \det DF|_{E^{uu}_F} \bigr|  d\mu.
\end{equation}

Since
\[
 \frac{1}{n} \sum_{k=0}^{n-1} \log \bigl| \det DF|_{E^{uu}_F(F^k(w))} \bigr|
= \frac{1}{n} \log |\det \bigl( DF^n|_{E^{uu}_F(w)} \bigr) |=
 \frac{1}{n} \log \bigl\| DF^n|_{E^{uu}_F(w)} \bigr\|,
\]
where the last equality uses the fact that \(E^{uu}_F\) is one-dimensional.
Assume \(\dim N = m_0\). Choose any vector
\[
v \in \mathscr{C}_\alpha(E^{uu}, E^{ss} \oplus TN)
\]
such that its component along \(E^{uu}\) has unit length.  Notice that \(F(x,y) = \bigl(A^{n_1}(x), k(x,y)\bigr)\), so the component on the torus does not depend on the choice of \(y\).  Moreover, \( E^{ss} \oplus TN \) (\( = E^{ss} \times \{ 0 \} \oplus \{ 0 \} \times TN \)) is perpendicular to \( E^{uu} \) (\( = E^{uu} \times \{ 0 \} \)), i.e.,
\[
E^{ss} \oplus TN \perp E^{uu}.
\]
 By the forward invariance of the cone and the Pythagorean theorem, it follows that
\[
\sigma_1^n\le\|DF^n(v)\| \le \sqrt{1 + (m_0 + 1)\alpha^2}\cdot\sigma_1^n.
\]
It then follows that
\[
\lim_{n \to \pm\infty} \frac{1}{n} \log \bigl\| DF^n \big\|_{E^{uu}_F(v)} \bigr\|=\log\sigma_1.
\]
Hence, we obtain \( C(x) \equiv \log \sigma_1 \). Combining this with Equation~\eqref{ccc}, we obtain Equation~\eqref{mubiao}.
\end{proof}

\begin{pro}\label{gdebian}
For any ergodic Gibbs \(u\)-state \(\mu\) of \(f\), its projection onto \(\mathbb{T}^2 \times \mathbb{S}\) is given by $(\pi^3_2)_*(\mu)
=:\mu_{\mathbb{T}^2\times\mathbb{S}}$.
Then \(\mu_{\mathbb{T}^2\times\mathbb{S}}\) is an ergodic Gibbs \(u\)-state of \(g\).
\end{pro}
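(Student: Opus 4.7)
The plan is to treat the three required properties of $\mu_{\mathbb{T}^2\times\mathbb{S}}$ in turn. Invariance is immediate from the semiconjugacy $\pi^3_2\circ f = g\circ\pi^3_2$. Ergodicity also follows easily: for any $g$-invariant Borel set $B\subset\mathbb{T}^2\times\mathbb{S}$, the preimage $(\pi^3_2)^{-1}(B)\subset M$ is $f$-invariant, so ergodicity of $\mu$ forces $\mu((\pi^3_2)^{-1}(B))\in\{0,1\}$, and this number equals $\mu_{\mathbb{T}^2\times\mathbb{S}}(B)$.

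The heart of the argument is the Gibbs $u$-state property, and I would route it through the equivalent notion of $c$-Gibbs $u$-state. Both $f$ and $g$ factor over $A^{n_1}$ (via $\pi^3_1$ and $\pi^2_1$, respectively), so Theorem~\ref{dengjia} applies to both. Consequently, the hypothesis that $\mu$ is a Gibbs $u$-state of $f$ is equivalent to $\mu$ being a $c$-Gibbs $u$-state of $f$, and the target reduces to showing that $\mu_{\mathbb{T}^2\times\mathbb{S}}$ is a $c$-Gibbs $u$-state of $g$; Theorem~\ref{dengjia} applied to $g$ will then promote this to the desired Gibbs $u$-state property. Fix a Markov partition $\mathcal{R}=\{R_1,\dots,R_m\}$ of $A^{n_1}$ on $\mathbb{T}^2$; it induces Markov partitions $\mathcal{M}_f=\{R_i\times\mathbb{S}\times\mathbb{T}^2\}$ and $\mathcal{M}_g=\{R_i\times\mathbb{S}\}$ that are compatible with $\pi^3_2$, i.e., $(\pi^3_2)^{-1}(R_i\times\mathbb{S})=R_i\times\mathbb{S}\times\mathbb{T}^2$.

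The key identity to establish is
\[
(\pi^3_2)_*\nu^{uu}_{i,x,f}=\nu^{uu}_{i,\pi^3_2(x),g}.
\]
To prove it, observe that by Corollary~\ref{diffeomorphism} the projection $\pi^3_2$ restricts to a diffeomorphism from the plaque $\mathscr{F}^{uu}_i(x,f)$ onto $\mathscr{F}^{uu}_i(\pi^3_2(x),g)$, so the pushforward is a well-defined probability on the latter. The defining property $(\pi^3_1)_*\nu^{uu}_{i,x,f}=\mathrm{vol}^{u}_{i,\pi^3_1(x)}$ combined with the trivial identity $\pi^2_1\circ\pi^3_2=\pi^3_1$ yields
\[
(\pi^2_1)_*\bigl((\pi^3_2)_*\nu^{uu}_{i,x,f}\bigr)=\mathrm{vol}^{u}_{i,\pi^2_1(\pi^3_2(x))}.
\]
Since $\pi^2_1$ restricts to a homeomorphism from $\mathscr{F}^{uu}_i(\pi^3_2(x),g)$ onto $\mathcal{F}^{uu}_i(\pi^2_1(\pi^3_2(x)),A^{n_1})$, uniqueness of a probability with this pushforward identifies $(\pi^3_2)_*\nu^{uu}_{i,x,f}$ with $\nu^{uu}_{i,\pi^3_2(x),g}$.

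With this identity in hand, I would apply $(\pi^3_2)_*$ to the $c$-Gibbs disintegration $\mu|_{R_i\times\mathbb{S}\times\mathbb{T}^2}=\int\nu^{uu}_{i,x,f}\,d\hat\mu_i(x)$ (where $\hat\mu_i$ is the transversal measure) to obtain
\[
\mu_{\mathbb{T}^2\times\mathbb{S}}\bigl|_{R_i\times\mathbb{S}}=\int\nu^{uu}_{i,\pi^3_2(x),g}\,d\hat\mu_i(x).
\]
Recognizing the right-hand side as the disintegration of $\mu_{\mathbb{T}^2\times\mathbb{S}}|_{R_i\times\mathbb{S}}$ along strong unstable plaques of $g$ with conditional measures equal to $\nu^{uu}_{i,y,g}$ yields that $\mu_{\mathbb{T}^2\times\mathbb{S}}$ is a $c$-Gibbs $u$-state of $g$, and Theorem~\ref{dengjia} applied to $g$ then promotes this to a Gibbs $u$-state. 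The main obstacle is precisely this last recognition step: many $f$-plaques (parametrized by the $\mathbb{T}^2$-fibers of $\pi^3_2$) project onto the same $g$-plaque, so one must perform a careful quotient of $\hat\mu_i$ along these fibers and invoke uniqueness of disintegration. The saving grace is that the integrand $\nu^{uu}_{i,\pi^3_2(x),g}$ depends on $x$ only through $\pi^3_2(x)$, so the aggregation produces exactly the desired reference measure almost surely.
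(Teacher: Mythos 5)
Your proposal is correct in outline and shares the paper's central mechanism: both route the problem through $c$-Gibbs $u$-states via Theorem~\ref{dengjia}, and both hinge on the same reference-measure identity $(\pi^3_2)_*\nu^{uu}_{i,x,f}=\nu^{uu}_{i,\pi^3_2(x),g}$, established in the same way (Corollary~\ref{diffeomorphism} for the plaque diffeomorphism, the defining property of reference measures, the factorization $\pi^2_1\circ\pi^3_2=\pi^3_1$, and uniqueness). Where you diverge is in the transfer step. The paper does \emph{not} push forward the disintegration of $\mu$ directly. Instead, it uses ergodicity to write $\mu$ as a limit of Ces\`aro averages $\frac{1}{n}\sum_{j=0}^{n-1}f^j_*(\nu^{uu}_{i,x,f})$, applies $(\pi^3_2)_*$ to this limit (using $\pi^3_2\circ f=g\circ\pi^3_2$ and continuity), substitutes the reference-measure identity, and then invokes the first item of Corollary~\ref{gibbsc} to conclude that the resulting limit $\lim_n\frac{1}{n}\sum_j g^j_*(\nu^{uu}_{i,\pi^3_2(x),g})$ is a $c$-Gibbs $u$-state of $g$. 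This entirely sidesteps the issue you flag as the ``main obstacle'' — the fact that many $f$-plaques project onto one $g$-plaque, so pushing forward the disintegration of $\mu$ requires quotienting the transversal measure and invoking uniqueness of conditional measures. Your route is valid in principle (the conditional on the coarser partition $(\pi^3_2)^{-1}\{g\text{-plaques}\}$ is the average of the $f$-plaque conditionals, and since each pushes forward to the same $\nu^{uu}_{i,y,g}$, so does the average), but as written it remains a sketch: the quotient-of-disintegration step is described but not carried out. The paper's Ces\`aro-average argument is cleaner precisely because Corollary~\ref{gibbsc} is phrased in terms of limits of empirical averages rather than disintegrations, so once the reference-measure identity is in hand the conclusion is immediate. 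If you pursue your version, the remaining work is to make the quotient argument rigorous; alternatively, switching to the paper's empirical-average route removes the difficulty altogether.
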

\begin{proof}
It is clear that \(f\) factors over \(A^{n_1}\) via \(\pi_1^3 =   \pi_1^2\circ \pi_2^3\), and that \(g \) factors over \(A^{n_1}\) via \(\pi_1^2\).    By Theorem~\ref{dengjia}, it is enough to show that \( \mu_{\mathbb{T}^2 \times \mathbb{S}} \) is also an ergodic \( c \)-Gibbs \( u \)-state of \( g \). Since ergodicity is straightforward to verify, we will focus on proving that \(\mu_{\mathbb{T}^2 \times \mathbb{S}}\) is a \(c\)-Gibbs \(u\)-state of $g$.

By Theorem~\ref{dengjia}, \(\mu\) is an ergodic \(c\)-Gibbs \(u\)-state of $f$.
By the definition of a \(c\)-Gibbs \(u\)-state, there exists a strong unstable plaque \(\mathscr{F}^{uu}_i(x,f)\) and a corresponding reference measure \(\nu^{uu}_{i,x,f}\) supported on this plaque such that
\[
\lim_{n \to +\infty} \frac{1}{n} \sum_{i=0}^{n-1} f^i_*(\nu^{uu}_{i,x,f}) = \mu.
\]
By continuity and the commutation relation
\[
\pi^3_2 \circ f^n = g^n \circ \pi^3_2,
\]
we obtain
\begin{equation}\label{dengshidengshi}
(\pi^3_2)_* \Biggl( \lim_{n \to +\infty} \frac{1}{n} \sum_{i=0}^{n-1} f^i_*(\nu^{uu}_{i,x,f}) \Biggr)
= (\pi^3_2)_*( \mu)
= \lim_{n \to +\infty} \frac{1}{n} \sum_{i=0}^{n-1} g^i_* \bigl( (\pi^3_2)_* (\nu^{uu}_{i,x,f}) \bigr).
\end{equation}
Moreover, the reference measures satisfy
\[
(\pi^3_1)_*(\nu^{uu}_{i,x,f}) = \operatorname{vol}^u_{i, \pi^3_1(x)},
\qquad \text{and } \pi^3_1 = \pi^2_1 \circ \pi^3_2.
\]
Hence,
\begin{equation}\label{tongpeig}
(\pi^2_1)_* \bigl( (\pi^3_2)_*( \nu^{uu}_{i,x,f}) \bigr) = \operatorname{vol}^u_{i, \pi^3_1(x)} \text{ defined on } \mathcal{F}^{uu}_i \bigl( \pi^2_1 \circ \pi^3_2(x), A^{n_1} \bigr).
\end{equation}
Recall that the strong unstable plaque at \( \pi^3_2(x) \) is \( \mathscr{F}^{uu}_i(\pi^3_2(x), g) \), which satisfies
\begin{equation}\label{tongpeig2}
\pi^2_1 \bigl( \mathscr{F}^{uu}_i(\pi^3_2(x), g) \bigr) = \mathcal{F}^{uu}_i \bigl( \pi^2_1 \circ \pi^3_2(x), A^{n_1} \bigr).
\end{equation}

Since \((\pi^3_2)_* (\nu^{uu}_{i,x,f})\) is defined on 
\[
\pi^3_2\bigl(\mathscr{F}^{uu}_i(x,f)\bigr),
\]
if \( \pi^3_2 \bigl( \mathscr{F}^{uu}_i(x,f) \bigr) = \mathscr{F}^{uu}_i(\pi^3_2(x), g) \), then combining \eqref{tongpeig} and \eqref{tongpeig2}, we obtain
\[
(\pi^3_2)_* (\nu^{uu}_{i,x,f}) = \nu^{uu}_{i,\pi^3_2(x),g},
\]
where \( \nu^{uu}_{i,\pi^3_2(x),g} \) is the reference measure on \( \mathscr{F}^{uu}_i(\pi^3_2(x), g) \).
At this point, combining equation~\eqref{dengshidengshi} and the first item of Lemma~\ref{gibbsc}, we have that
 \[ \mu_{\mathbb{T}^2 \times \mathbb{S}} = (\pi^3_2)_* (\mu) = \lim_{n \to +\infty} \frac{1}{n} \sum_{i=0}^{n-1} g^i_* \bigl(  \nu^{uu}_{i,\pi^3_2(x),g} \bigr)\] is a \( c \)-Gibbs \( u \)-state of \( g \).   For the reasons mentioned in this paragraph, we only need to focus on showing that 
\[
\pi^3_2 \bigl( \mathscr{F}^{uu}_i(x,f) \bigr) = \mathscr{F}^{uu}_i(\pi^3_2(x), g).
\]

By Corollary~\ref{diffeomorphism}, the map
\[
\pi^3_2 : \mathscr{F}^{uu}(x, f) \longrightarrow \mathscr{F}^{uu}(\pi^3_2(x), g)
\]
is a homeomorphism.  It follows that  $ \pi^3_2 \bigl( \mathscr{F}^{uu}_i(x,f) \bigr)\subset \mathscr{F}^{uu}(\pi^3_2(x), g)$. Moreover, the map
\[
\pi^2_1 : \mathscr{F}^{uu}(\pi^3_2(x), g) \longrightarrow \mathcal{F}^{uu}\bigl( \pi^2_1 \circ \pi^3_2(x), A^{n_1} \bigr)
\]
is also a homeomorphism.  Combine equation~\eqref{tongpeig2} and \(\pi^2_1 \circ \pi^3_2 \bigl( \mathscr{F}^{uu}_i(x,f) \bigr) = \mathcal{F}^{uu}_i\bigl( \pi^2_1 \circ \pi^3_2(x), A^{n_1} \bigr)\), we have
\[
\pi^3_2 \bigl( \mathscr{F}^{uu}_i(x,f) \bigr) = \mathscr{F}^{uu}_i(\pi^3_2(x), g).
\]
\end{proof}

\subsection{Proof of Theorem~\ref{mostlycontracting}}

\begin{proof}[Proof of Theorem~\ref{mostlycontracting}]
By Lemma~\ref{sat}, it suffices to show that for every ergodic Gibbs \(u\)-state \(\mu\) of \(f\),
\[
\int \log\Bigl|\det\bigl(Df|_{\widetilde{T\mathbb{S}}(x)}\bigr)\Bigr|d\mu(x) < 0.
\]
Assume from now on that \(\mu\) is an arbitrary ergodic Gibbs \(u\)-state of \(f\).

 By Proposition~\ref{gdebian}, \[\mu_{\mathbb{T}^2\times\mathbb{S}}=(\pi^3_2)_*\mu\] is an ergodic Gibbs \(u\)-state for \(g\).  By Lemma~\ref{mostcon}, 
\[
\int \log\Bigl|\det\bigl(Dg|_{T\mathbb{S}(y)}\bigr)\Bigr|
d\mu_{\mathbb{T}^2\times\mathbb{S}}(y)<0.\] By Lemma~\ref{pi},
\[
\int \log\Bigl|\det\bigl(Df|_{ \widetilde{T\mathbb{S}}(x)}\bigr)\Bigr| d\mu(x)=
\int \log\Bigl|\det\bigl(Dg|_{T\mathbb{S}(y)}\bigr)\Bigr|
d\mu_{\mathbb{T}^2\times\mathbb{S}}(y)<0,
\]which completes the proof.
\end{proof}

\section{Mostly Expanding Sub-Centers}\label{eight}

\begin{lem}\label{mostlyexpanding}
The subbundle $E^{cu}$ is mostly expanding.
\end{lem}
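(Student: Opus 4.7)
The plan is to reduce to ergodic Gibbs $u$-states of $f$ and show that the single Lyapunov exponent along the one-dimensional invariant bundle $E^{cu}=E^u$ is strictly positive. By Proposition~\ref{sat}, almost every ergodic component of a Gibbs $u$-state is again a Gibbs $u$-state, so it is enough to verify
\[
\chi(\mu):=\int \log \bigl\|Df|_{E^{cu}}\bigr\|\,d\mu>0
\]
for every ergodic Gibbs $u$-state $\mu$ of $f$.

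Reading off the $E^u$-column in~\eqref{exp1} and~\eqref{exp2} shows that $E^u$ is $Df$-invariant, with $\|Df|_{E^{cu}}\|=\sigma_0\frac{\partial P}{\partial d}$ on $\Lambda(\hat p_1)\cup\Lambda(\hat p_2)$ and $\|Df|_{E^{cu}}\|=\sigma_0$ elsewhere. Set
\[
B:=\bigl(\Lambda(\hat p_1)\cup\Lambda(\hat p_2)\bigr)\cap\bigl\{\psi\bigl(k\sqrt{a^2+b^2+c^2+e^2}\bigr)>0\bigr\}.
\]
Outside $B$ the bump vanishes and $\frac{\partial P}{\partial d}=1$, so $\|Df|_{E^{cu}}\|=\sigma_0$ throughout $M\setminus B$; on $B$ inequality~\eqref{welldefined2} supplies the uniform lower bound $\|Df|_{E^{cu}}\|\ge\tfrac34$.

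To control $\mu(B)$ I would push $\mu$ down to the first torus. Since $f$ factors over $A^{n_1}$ via $\pi^3_1$, Theorem~\ref{dengjia} identifies $\mu$ with a $c$-Gibbs $u$-state of $f$, and the second item of Corollary~\ref{gibbsc} then yields
\[
(\pi^3_1)_*\mu=\Leb_{\mathbb{T}^2}.
\]
Because $\psi\bigl(k\sqrt{a^2+b^2+c^2+e^2}\bigr)>0$ forces $\sqrt{a^2+b^2}\le\delta/k$, the projection $\pi^3_1(B)$ lies inside a pair of neighborhoods of $p_1,p_2\in\mathbb{T}^2$ of $(a,b)$-radius $\delta/k$, and the choice $3\delta/(2k)\le\beta/10000$ from Section~\ref{three} makes their total Lebesgue measure comfortably smaller than $2\beta^2$. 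Hence
\[
\mu(B)\le \Leb_{\mathbb{T}^2}\bigl(\pi^3_1(B)\bigr)\le 2\beta^2.
\]

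Splitting the Lyapunov integral over $B$ and its complement, and using $\log\tfrac34<0<\log\sigma_0$ to see that the lower bound is monotone decreasing in $\mu(B)$, we obtain
\[
\chi(\mu)\ge \mu(B)\log\tfrac34+(1-\mu(B))\log\sigma_0\ge 2\beta^2\log\tfrac34+(1-2\beta^2)\log\sigma_0>0,
\]
the final strict inequality being precisely the third bullet in the joint choice of $\beta$ and $n_0$ in Section~\ref{three}. The main obstacle I foresee is not the arithmetic, which was engineered into the constants at construction, but rather the chain of Gibbs/$c$-Gibbs identifications (Theorem~\ref{dengjia} and Corollary~\ref{gibbsc}) needed to extract the Lebesgue marginal on the first torus for an arbitrary ergodic Gibbs $u$-state of $f$; once that marginal is secured, the smallness of $B$ in the $(a,b)$-direction closes the argument.
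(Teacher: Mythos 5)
Your argument is correct and follows essentially the same route as the paper: reduce to ergodic Gibbs $u$-states via Proposition~\ref{sat}, use Theorem~\ref{dengjia} plus Corollary~\ref{gibbsc} to get $(\pi^3_1)_*\mu=\Leb_{\mathbb{T}^2}$, observe that $\|Df|_{E^{cu}}\|=\sigma_0\frac{\partial P}{\partial d}$ on $\Lambda(\hat p_1)\cup\Lambda(\hat p_2)$ (equal to $\sigma_0$ elsewhere) with the uniform lower bound $\tfrac34$ from~\eqref{welldefined2}, bound the measure of the bump region by $2\beta^2$ via the Lebesgue marginal on the first torus, and close with the engineered inequality $2\beta^2\log\tfrac34+(1-2\beta^2)\log\sigma_0>0$. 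The only cosmetic difference is that you localize the bad set to where the bump factor $\psi(k\sqrt{a^2+b^2+c^2+e^2})$ is strictly positive, whereas the paper uses the slightly larger but more convenient box $\hat{\mathcal{F}}^{uu}_{\beta/2}(\hat p_i)\times\hat{\mathcal{F}}^{ss}_{\beta/2}(\hat p_i)\times\cdots$; both yield the same $2\beta^2$ bound.
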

\begin{proof}
By Lemma~\ref{sat}, it suffices to show that for every ergodic Gibbs \(u\)-state \(\mu\) of \(f\),
\[
\int \log\Bigl|\det\bigl(Df|_{E^{cu}(x)}\bigr)\Bigr|d\mu(x) > 0.
\]

Assume from now on that \(\mu\) is an arbitrary ergodic Gibbs \(u\)-state of \(f\).  Combining Theorem~\ref{dengjia} with second item  of Corollary~\ref{gibbsc},  \[(\pi^3_1)_*\mu = \mathrm{Leb}_{\mathbb{T}^2}.\]
Recalling our construction, on the lifted space, we have
\[
\left|\det\left(D\hat{f}|_{E^{cu}(x)}\right)\right| \ge \frac{3}{4}
\]
on
\[
\hat{\mathcal{F}}^{uu}_{\frac{\beta}{2}}(\hat{p}_1) \times \hat{\mathcal{F}}^{ss}_{\frac{\beta}{2}}(\hat{p}_1) \times \mathbb{R} \times \hat{\mathcal{F}}^{u}_{5\delta}(\hat{p}_1) \times \hat{\mathcal{F}}^{s}_{5\delta}(\hat{p}_1),
\]
and
\[
\hat{\mathcal{F}}^{uu}_{\frac{\beta}{2}}(\hat{p}_2) \times \hat{\mathcal{F}}^{ss}_{\frac{\beta}{2}}(\hat{p}_2) \times \mathbb{R} \times \hat{\mathcal{F}}^{u}_{5\delta}(\hat{p}_2) \times \hat{\mathcal{F}}^{s}_{5\delta}(\hat{p}_2),
\]
while at all other points, 
\[
\left|\det\left(D\hat{f}|_{E^{cu}(x)}\right)\right| = \sigma_0.
\]
 It follows that
\[
\mu\left( \{x \in \mathbb{T}^2 \times \mathbb{S} \times \mathbb{T}^2 : \left|\det\left(Df|_{E^{cu}(x)}\right)\right| \le 1 \} \right) \le 2\beta^2.
\]
Then
\[
\int \log\Bigl|\det\bigl(Df|_{E^{cu}(x)}\bigr)\Bigr|  d\mu(x) \ge 2\beta^2 \log \frac{3}{4} + (1 - 2\beta^2) \log \sigma_0 > 0.
\]
\end{proof}

\section{Proof of the Main Theorem~\ref{main}}\label{nine}

\begin{proof}[Proof of Theorem~\ref{main}]
Since
\[
E^{cs}
=
\widetilde{T\mathbb{S}}
\oplus_{\succ}
\bigl(
\widetilde{E^{s}}
\oplus_{\succ}
\widetilde{E^{ss}}
\bigr),
\]
combining Lemma~\ref{daxiao} with Theorem~\ref{mostlycontracting}, we conclude that
\( E^{cs} \) is mostly contracting.  
Moreover, by Lemma~\ref{mostlyexpanding}, the bundle \( E^{cu} \) is mostly expanding.  
This completes the proof of the first property~(\ref{first}).

Combining Lemma~\ref{change} with Theorem~\ref{skeleton},  the support of the physical measure whose support is the closure of
\[
W^{u}\bigl((q_1,0,q_1), f\bigr)
\]
contains the points \( (p_1,0,p_1) \) and \( (q_2,0,q_2) \). At these points, we have
\[
\bigl\| Df|_{E^{cu}(p_1,0,p_1)} \bigr\| = \tfrac{3}{4}
\quad \text{and} \quad
\bigl\| Df|_{\widetilde{T\mathbb{S}}(q_2,0,q_2)} \bigr\| = \frac{3}{2}.
\]
Similarly,  the support of the physical measure  whose support is  the closure of \( W^{u}((q_2,1,q_2), f) \) have parallel properties. This completes the proof of property~\eqref{second}.

Combining Lemma~\ref{change} with Theorem~\ref{reduction} and Lemma~\ref{bubianshumu}, we complete the proof of property~\eqref{third}.
\end{proof}

\smallskip

\begin{sloppypar}
\flushleft{\bf Hangyue Zhang} \\
\small School of Mathematical Sciences,  Nanjing University, Nanjing, 210093, P.R. China\\
\textit{E-mail:} \texttt{zhanghangyue@nju.edu.cn}\\
\end{sloppypar}

\end{document}